\DeclarePairedDelimiter{\floorr}{\lfloor}{\rfloor}
\definecolor{mycolor1}{rgb}{0.105882,0.619608,0.466667}
\definecolor{mycolor2}{rgb}{0.85098,0.372549,0.00784314}
\definecolor{mycolor3}{rgb}{0.458824,0.439216,0.701961}
\definecolor{mycolor4}{rgb}{0.905882,0.160784,0.541176}
\definecolor{mycolor5}{rgb}{0.4,0.65098,0.117647}
\definecolor{mycolor6}{rgb}{0.65098,0.462745,0.113725}
\definecolor{mycolor7}{rgb}{0.901961,0.670588,0.00784314}
\definecolor{mycolor8}{rgb}{0.4,0.4,0.4}
\definecolor{mycolor9}{rgb}{0.301961,0,0.294118}
\definecolor{mycolor10}{rgb}{0.0313725,0.25098,0.505882}
\newif\ifmygrid@coordinates
\tikzset{/mygrid/step line/.style={line width=0.80pt,draw=gray!80},
         /mygrid/steplet line/.style={line width=0.25pt,draw=gray!80}}
\def\mygrid@def@coordinates(#1,#2)(#3,#4){%
    \def\mygrid@xlo{#1}%
    \def\mygrid@xhi{#3}%
    \def\mygrid@ylo{#2}%
    \def\mygrid@yhi{#4}%
}
\newcommand\DrawGrid[3][]{%
    \pgfkeys{/mygrid/.cd,coordinates=true,step=1,steplet=0.2,#1}%
    \draw[/mygrid/steplet line] #2 grid[step=\mygrid@steplet] #3;
    \draw[/mygrid/step line] #2 grid[step=\mygrid@step] #3;
    \mygrid@def@coordinates#2#3%
    \ifmygrid@coordinates%
        \draw[/mygrid/step line]
        \foreach \xpos in {\mygrid@xlo,...,\mygrid@xhi} {%
          (\xpos,\mygrid@ylo) -- ++(0,-3pt)
                              node[anchor=north] {$\xpos$}
        }
        \foreach \ypos in {\mygrid@ylo,...,\mygrid@yhi} {%
          (\mygrid@xlo,\ypos) -- ++(-3pt,0)
                              node[anchor=east] {$\ypos$}
        };
    \fi%
}
\newcommand{\remove}[1]{}
\newcommand{\removesafe}[1]{}
\newcommand{\transpose}{^\top\! }
\newcommand{\inner}[2]{\left\langle{#1},{#2}\right\rangle}
\newcommand{\innersmall}[2]{\langle{#1},{#2}\rangle}
\newcommand{\innerbig}[2]{\big\langle{#1},{#2}\big\rangle}
\newcommand{\trace}{\mathrm{trace}}
\newcommand{\Trace}{\mathrm{trace}}
\newcommand{\spann}{\mathrm{span}}
\newcommand{\symmop}{\operatorname{sym}}
\newcommand{\symm}[1]{\symmop\!\left( #1 \right)}
\newcommand{\symmsmall}[1]{\symmop( #1 )}
\newcommand{\sbdop}{\operatorname{symblockdiag}}
\newcommand{\sbd}[1]{\sbdop\!\left({#1}\right)}
\newcommand{\sbdsmall}[1]{\sbdop({#1})}
\newcommand{\Stdpm}{\St(d, p)^m}
\newcommand{\Proj}{\mathrm{Proj}}
\newcommand{\Retr}{\mathrm{Retraction}}
\newcommand{\St}{\mathrm{St}}
\newcommand{\T}{\mathrm{T}}
\newcommand{\M}{\mathcal{M}}
\newcommand{\Od}{{\mathrm{O}(d)}}
\newcommand{\Op}{{\mathrm{O}(p)}}
\newcommand{\Snn}{{\mathbb{S}^{n\times n}}}
\newcommand{\Spp}{{\mathbb{S}^{p\times p}}}
\newcommand{\Sdd}{{\mathbb{S}^{d\times d}}}
\newcommand{\Rnn}{{\mathbb{R}^{n\times n}}}
\newcommand{\Rnp}{{\mathbb{R}^{n\times p}}}
\newcommand{\Rdp}{{\mathbb{R}^{d\times p}}}
\newcommand{\Rpp}{\mathbb{R}^{p\times p}}
\newcommand{\Rp}{{\mathbb{R}^{p}}}
\newcommand{\Rdd}{{\mathbb{R}^{d\times d}}}
\newcommand{\reals}{{\mathbb{R}}}
\newcommand{\Rn}{{\mathbb{R}^n}}
\newcommand{\Rm}{{\mathbb{R}^m}}
\newcommand{\grad}{\mathrm{grad}}
\newcommand{\Hess}{\mathrm{Hess}}
\newcommand{\vecc}{\mathrm{vec}}
\newcommand{\diag}{\mathrm{diag}}
\newcommand{\D}{\mathrm{D}}
\newcommand{\calC}{\mathcal{C}}
\newcommand{\calE}{\mathcal{E}}
\newcommand{\calF}{\mathcal{F}}
\newcommand{\calL}{\mathcal{L}}
\newcommand{\calH}{\mathcal{H}}
\newcommand{\calO}{\mathcal{O}}
\newcommand{\rank}{\operatorname{rank}}
\newcommand{\frobnormbig}[1]{\big\|{#1}\big\|_\mathrm{F}}
\newcommand{\opnormsmall}[1]{\|{#1}\|_\mathrm{op}}
\newcommand{\frobnormsmall}[1]{\|{#1}\|_\mathrm{F}}
\newcommand{\smallfrobnorm}[1]{\|{#1}\|_\mathrm{F}}
\newcommand{\sqfrobnormsmall}[1]{\frobnormsmall{#1}^2}
\newcommand{\sqfrobnormbig}[1]{\frobnormbig{#1}^2}
\newcommand{\frobnorm}[2][F]{\left\|{#2}\right\|_\mathrm{#1}}
\newcommand{\sqfrobnorm}[2][F]{\frobnorm[#1]{#2}^2}
\newcommand{\floor}[1]{\lfloor #1 \rfloor}
\newcommand{\lambdamin}{\lambda_\mathrm{min}}
\newcommand{\ith}{$i$th }
\newcommand{\jth}{$j$th }
\newtheorem{theorem}{Theorem}[section]
\newtheorem{lemma}[theorem]{Lemma}
\newtheorem{proposition}[theorem]{Proposition}
\newtheorem{corollary}[theorem]{Corollary}
\newtheorem{definition}{Definition}[section]
\newtheorem{remark}[theorem]{Remark}
\title{A Riemannian low-rank method for optimization over semidefinite matrices with block-diagonal constraints}
\author{
Nicolas Boumal \\ %
Inria \& D.I. Sierra, UMR 8548, Ecole Normale Sup\'erieure, Paris, France\\
\texttt{nicolas.boumal@ens.fr} \\
}
\begin{document}

\maketitle

\begin{abstract}
We propose a new algorithm to solve optimization problems of the form $\min f(X)$ for a smooth function $f$ under the constraints that $X$ is positive semidefinite and the diagonal blocks of $X$ are small identity matrices. Such problems often arise as the result of relaxing a rank constraint (lifting).
In particular, many estimation tasks involving phases, rotations, orthonormal bases or permutations fit this framework, and so do certain relaxations of combinatorial problems such as Max-Cut.
The proposed algorithm exploits the facts that~(1) such formulations admit low-rank solutions, and~(2) their rank-restricted versions are smooth optimization problems on a Riemannian manifold. Combining insights from both the Riemannian and the convex geometries of the problem, we characterize when second-order critical points of the smooth problem reveal KKT points of the semidefinite problem. In particular, we bound the ranks that need to be considered, deterministically. Comparison against state of the art, mature software shows that, on certain interesting problem instances, what we call the staircase method is orders of magnitude faster, is more accurate and scales better.
Code is available.
\end{abstract}

\clearpage
\section{Introduction}

This paper considers the generic problem of estimating matrices $Y_1, \ldots, Y_m \in \Rdp$, $d \leq p$, with orthonormal rows, that is, such that $Y_i^{}Y_i\transpose = I_d$ (identity of size $d$) for all $i$. We further focus on problems where only \emph{relative} information is available, that is, information about $Y_i^{}Y_j\transpose$ for some of the pairs $(i,j)$ is available, but there is no information about individual $Y_i$'s. %
As will be detailed below, particular cases of this come up in a number of applications. For example, when $p = d = 1$, the variables $Y_i$ reduce to $\{\pm 1 \}$ and the products $Y_i^{}Y_j\transpose$ indicate whether $Y_i$ and $Y_j$ have the same sign or not, allowing to model certain combinatorial problems. When $d = 1, p > 1$, the variables reduce to unit-norm vectors, and the products correspond to inner products between them, allowing to model correlations and proximity on a sphere. Finally, when $d = p > 1$, the matrices are orthogonal, and the products $Y_i^{}Y_j\transpose = Y_i^{}Y_j^{-1}$ represent relative orthogonal transformations, such as rotations, reflections and permutations.

For ease of notation, we stack the orthonormal matrices on top of each other to form $Y \in \Rnp$, $n = md \geq p$. Then, $X = YY\transpose$ is a block matrix whose block $X_{ij} \in \Rdd$ corresponds to the relative product $Y_i^{}Y_j\transpose$. Define the (transposed) \emph{Stiefel manifold} as
\begin{align*}
\St(d, p) = \{ Z \in \Rdp : ZZ\transpose = I_d \},
\end{align*}
and the set of $Y$'s obtained by stacking as
\begin{align}
\Stdpm & = \left\{ Y \in \reals^{md\times p} : Y\transpose = \begin{pmatrix}
Y_1\transpose & Y_2\transpose & \cdots & Y_m\transpose
\end{pmatrix} \textrm{ and } Y_1, \ldots, Y_m \in \St(d, p) \right\} \nonumber \\
	& = \left\{ Y\in\Rnp : (YY\transpose)_{ii} = I_d \textrm{ for } i = 1\ldots m \right\}.
	\label{eq:Stdpm}
\end{align}
This paper is concerned with solving optimization problems of the form
\begin{align}
\tag{$\textrm{RP}_p$}
\label{eq:RP}
\min_{Y\in\Rnp} \ \ g(Y) = f(YY\transpose), \quad %
\textrm{subject to} \quad Y \in \Stdpm,
\end{align}
with twice continuously differentiable cost $f \colon \Snn \to \reals$ defined over the symmetric matrices. Here, $g$ is for example the negative likelihood of $Y$ with respect to available data. %
The restriction that $g(Y)$ be only a function of $YY\transpose$ encodes the property that only relative information is available, through $(YY\transpose)_{ij} = Y_i^{}Y_j\transpose$. This induces invariance of the cost under right-action of the orthogonal group. Indeed, $g(YQ) = g(Y)$ for any orthogonal matrix $Q$ of size $p$. Thus, solutions of~\eqref{eq:RP} are only defined up to this group action.

Problem~\eqref{eq:RP} is computationally hard. In particular, for $d = p = 1$ and linear $f$, it covers the NP-hard Max-Cut problem~\citep{goemans1995maxcut}. Following that and other previous work~\citep{beck2007quadratic,singer2011eigen,bandeira2013approximating}, we consider a relaxation through the following observation. For all $Y\in\Stdpm$, the matrix $X = YY\transpose$ is positive semidefinite, its diagonal blocks $X_{ii}$ are identity matrices $I_d$, and it has rank at most $p$. Conversely, any matrix $X$ with those properties can be factored as $YY\transpose$ with $Y\in\Stdpm$. In other words, problem~\eqref{eq:RP} is equivalent to optimizing $f$ over the convex set
\begin{align}
	\calC & = \left\{ X \in \Snn : X \succeq 0 \textrm{ and } X_{ii} = I_d \textrm{ for } i = 1\ldots m \right\},
	\label{eq:calC}
\end{align}
with the additional constraint $\rank(X) \leq p$. As often, the rank constraint is the culprit. Indeed, continuing with the Max-Cut example (linear $f$), optimization over $\calC$ without the rank constraint is a \emph{semidefinite program}, which can be solved to arbitrary precision in polynomial time~\citep{vandenberghe1996semidefinite}.

This is motivation to study the relaxation obtained by ignoring the rank constraint (for linear $f$, it is also the dual of the dual of~\eqref{eq:RP}):
\begin{align}
\tag{P}
\label{eq:P}
\min_{X\in\Rnn} \ \ f(X), \quad \textrm{subject to} \quad X\in\calC.
\end{align}
The optimal cost of~\eqref{eq:P} is a lowerbound on that of~\eqref{eq:RP}. Furthermore, if~\eqref{eq:P} admits a solution $X = YY\transpose$ with $Y\in\Rnp$, that is, a solution of rank at most $p$, then $Y$ is a solution of~\eqref{eq:RP}. When that is not the case, a higher-rank solution $X$ may still be projected to a (hopefully good) initial guess for the nonconvex problem~\eqref{eq:RP}. See \citep{naor2013efficient,bandeira2013approximating} for a discussion of approximation results related to these projections.
The price to pay is that $\calC$ is much higher dimensional than $\Stdpm$: this is called a \emph{lift}~\citep{beck2007quadratic}.

For linear $f$, solutions of~\eqref{eq:P} can of course be computed using standard SDP solvers, such as interior point methods (IPM). Unfortunately, as demonstrated in Section~\ref{sec:linear}, IPM's do not scale well. The main reason for it is that, as the name suggests, IPM's iterate inside the interior of the search space $\calC$. The latter is formed by full-rank, dense matrices of size $n$:
this quickly becomes unmanageable.

The full-rank operations seem even more wasteful considering that, still for linear $f$, problem~\eqref{eq:P} always admits a solution of rank at most
\begin{align}
	p^* = \frac{\sqrt{1 + 4md(d+1)} - 1}{2} < (d+1)\sqrt{m} \ll n.
	\label{eq:pstar}
\end{align}
Indeed, this follows a general result of \citet{shapiro1982rank}, \citet{barvinok1995problems} and \citet{pataki1998rank} regarding extreme points of \emph{spectrahedra},\footnote{The name spectrahedron for the search space of a semidefinite program echoes the name polyhedron for the search space of a linear program.} that is, intersections of the positive semidefinite cone with an affine subspace---the geometry of $\calC$ is discussed in Section~\ref{sec:geometryconvex}. This prompted \citet{sdplr,burer2005local} to propose SDPLR, a generic SDP solver which exploits the low-rank phenomenon. Applying SDPLR to our problem amounts to computing a local minimizer $Y$ of~\eqref{eq:RP} for some small $p$, using classical nonlinear optimization algorithms and penalizing for the constraints in a Lagrangian way. Then, $p$ is increased as needed until $YY\transpose$ can be certified as a solution to the SDP.

SDPLR is powerful and generic, and the theory accompanying the algorithm brings great insight into the problem. But it also has some downsides we want to improve on in the context of~\eqref{eq:P}. First, it is not an easy matter to guarantee convergence to (even local) optimizers in the nonlinear subproblems. Furthermore, since constraints are enforced by penalization, they are not accurately satisfied by the returned solution. Finally, we would like to allow for nonlinear $f$. Nevertheless, Section~\ref{sec:linear} shows SDPLR improves significantly upon IPM's.

\citet{journee2010low} build upon SDPLR, observing that certain SDP's harbor an elegant Riemannian geometry that can be put to good algorithmic use. In particular, they cover what here corresponds to the case $d = 1$ and observe that, as remains true for $d > 1$, \eqref{eq:RP} is an optimization problem on a smooth space: $\Stdpm$ is a \emph{Riemannian manifold}---this geometry is detailed in Section~\ref{sec:geometryriemannian}. Allowing for smooth nonlinear $f$, they apply essentially the SDPLR machinery, replacing the nonlinear programming algorithms for~\eqref{eq:RP} by Riemannian optimization algorithms~\citep{AMS08}. These algorithms exploit the smooth structure of the nonconvex search space, resulting in constraint satisfaction up to numerical accuracy, as well as notable speedups.

As a further refinement, \citet{journee2010low} address the invariance of $f$ under orthogonal group action. Instead of optimizing $f(YY\transpose)$ over $\Stdpm$, they optimize over the quotient space $\Stdpm_* / \! \sim$, where $\sim$ is an equivalence relation defined over $\Stdpm_*$ (the full-rank elements of $\Stdpm$) by $Y\sim \tilde Y \Leftrightarrow YY\transpose = \tilde Y \tilde Y\transpose$. The advantage is that this quotient space, which is still a smooth Riemannian manifold, is now one-to-one with the rank-$p$ matrices in $\calC$. Unfortunately, the geometry breaks down at rank-deficient $Y$'s (to see this, notice that equivalence classes of different rank have different dimension; see also Figure~\ref{fig:geometry}). The breakdown is problematic since, as will become clear, it is desirable to converge to rank-deficient $Y$'s. Furthermore, that paper too asks for computation of local optimizers of the subproblems, which, on Riemannian manifolds too, is a difficult task.

In both~\citep{burer2005local} and~\citep{journee2010low}, one of the keys to practical efficiency is (well-justified) optimism: \eqref{eq:RP} is first solved for small values of $p$, and $p$ is increased only as needed. In both papers, it is observed that, in practice, it often suffices to reach $p$ just above the rank of the target solution of~\eqref{eq:P}, which may be quite small; but there is no theory to confirm this. We do not prove such a strong result either, but we give some nontrivial, deterministic bounds on ``how high one must lift'', refining certain results of~\citep{burer2005local}.

\subsection{Contribution}

In this paper, we describe the Riemannian geometry of $\Stdpm$ in order to frame~\eqref{eq:RP} as a Riemannian optimization problem. We use existing algorithms~\citep{AMS08} and the Manopt toolbox~\citep{manopt} to compute critical points of~\eqref{eq:RP}, that is, points where the (Riemannian) gradient of the cost function vanishes. In practice, those algorithms tend to converge to second-order critical points, that is, points where the (Riemannian) Hessian is also positive semidefinite, because all other critical points are unstable fixed points of the iteration.

For $p > d$, $\Stdpm$ is a connected,\footnote{For $p=d$, $\Stdpm$ has $2^m$ disconnected components, because the orthogonal group has two components: matrices with determinant $+1$ and $-1$. This is a strong incentive to relax at least to $p = d+1$.} compact and smooth space. Since we further assume sufficient smoothness in $f$ too, this makes for a nice problem with no delicate limit cases to handle. Furthermore, Riemannian optimization algorithms iterate on the manifold directly: all iterates satisfy constraints up to numerical accuracy.

We then turn our attention to computing Karush-Kuhn-Tucker (KKT) points for~\eqref{eq:P}. These are points that satisfy first-order necessary optimality conditions. If $f$ is convex, the conditions are also sufficient.
Our goal is to compute KKT points
via the computation of second-order critical points of~\eqref{eq:RP}, which is lower-dimensional. A key property that makes this possible is the availability of an explicit dual matrix $S(X)$~\eqref{eq:S} which intervenes in both sets of conditions. 

Using this dual matrix, we show that rank-deficient second-order critical points $Y$ reveal KKT points $X = YY\transpose$. Furthermore, when a computed second-order critical point is full rank, it is shown how to use it as a warm-start for the computation of a second-order critical point of~\eqref{eq:RP} with a larger value of $p$. It is guaranteed that if $p$ is allowed to grow up to $n$, then all second-order critical points reveal KKT points, so that the procedure terminates. This is formalized in Algorithm~\ref{algo:staircase}, which we call the \emph{Riemannian Staircase}, as it lifts~\eqref{eq:RP} to~\eqref{eq:P} step by step, instead of all at once.

The above points rest extensively on work discussed earlier in this introduction~\citep{sdplr,burer2005local,journee2010low}, and improve upon those along the lines announced in the same. In particular, we do not require the computation of local optimizers of~\eqref{eq:RP}, and we avoid the geometry breakdown tied to the quotient approach in~\citep{journee2010low}. We also stress that the latter reference only covers $d = 1$, and SDPLR only covers linear $f$.

We further take particular interest in understanding how large $p$ may grow in the staircase algorithm. We view this part as our principal theoretical contribution. This investigation calls for inspection of the convex geometry of $\calC$, with particular attention to its faces and their dimension. To this effect, we use results by~\citet{pataki1998rank} to describe the face of $\calC$ which contains a given $X$ in its relative interior, and we quote the lower-bound on the dimension of that face as a function of $\rank(X)$. We further argue that this bound is almost always tight, and we give an essentially tight upper bound on the dimension of a face, generalizing a result of~\citet{laurent1996facial} to $d > 1$.

Using this facial description of $\calC$, we establish that for strongly concave $f$, for $p > p^*$~\eqref{eq:pstar}, all second-order critical points of~\eqref{eq:RP} reveal KKT points. Also, for concave $f$, we show the same for $p > \frac{d+1}{d+3}n$ (Corollary~\ref{cor:sufficientp}), and argue that $p > p^*$ is sufficient under an additional condition we believe to be mild. Hence,

\begin{quote}
	\emph{For linear $f$, above a certain threshold for $p$, all second-order critical points of~\eqref{eq:RP} are global optimizers.}
\end{quote}

The condition is stronger than the one proposed in~\citep{burer2005local}, and the statement is about second-order critical points, rather than about local optimizers of~\eqref{eq:RP}. There are no similar results for convex $f$, as then solutions can have any rank.

We close the paper with numerical experiments showing the efficiency of the staircase algorithm to solve~\eqref{eq:P} on certain synchronization problems involving rotations and permutations, as compared to IPM's and SDPLR.

Note that, up to a linear change of variable, problem~\eqref{eq:P} also encompasses constraints of the form $X_{ii} = B_i$ where each $B_i$ is positive definite. We assume all diagonal blocks have identical size $d$ as this simplifies exposition, but the proposed method can easily accommodate inhomogeneous sizes, and many of the developments go through for complex matrices as well.

\subsection{Applications}

Problem~\eqref{eq:RP} and its relaxation~\eqref{eq:P}
appear in numerous applications. Many of those belong to the class of \emph{synchronization problems}, which consist in estimating group elements from measurements of pairwise ratios. Further applications are also described, e.g., in~\citep{singer2010angular,naor2013efficient,bandeira2013approximating}.

\paragraph{Combinatorial problems} can be modeled in~\eqref{eq:RP} with $d = p = 1$. A seminal example is \textbf{Max-Cut}: the problem of clustering a graph in two classes, so as to maximize the sum of weights of edges joining the two classes. The cost $f$ is linear, determined by the graph's adjacency matrix. Its relaxation to~\eqref{eq:P} is the subject of an influential analysis by~\citet{goemans1995maxcut}, which helped popularize the type of lifts considered here. See \citep[eq.\,(3)]{das2015sdhap} for a recent application of Max-Cut to genomics. The same setup, but with different linear costs, appears in the \textbf{stochastic block model}~\citep{abbe2014exact}, in \textbf{community detection}~\citep{cucuringu2014synchronization}, in \textbf{maximum a posteriori (MAP) inference in Markov random fields} with binary variables and pairwise interactions~\citep{frostig2014simple} and in \textbf{robust PCA}~\citep[Alg.\,1]{mccoy2011robustpca}. All of these study the effects of the relaxation on the final outcome, mostly under random data models. Their linear cost matrices are often structured (sparse or low-rank), which is easily exploited here.

\paragraph{Spherical embeddings} is the general problem of estimating points on a sphere in $\Rp$, and appears notably in machine learning for classification~\citep{wilson2010spherical} and in the fundamental problem of \textbf{packing spheres on a sphere}~\citep{cohn2007universally}. It is modeled by~\eqref{eq:RP} with $d = 1, p > 1$. The same setup also models \textbf{correlation matrix completion and approximation}~\citep{grubisic2007lowrankcorrelation}. In the latter, an algorithm to solve~\eqref{eq:P} is proposed, which inspired~\citep{journee2010low}, which inspired this work.

\paragraph{Synchronization of rotations} is the problem of estimating $m$ rotation matrices (orthogonal matrices with determinant 1, to exclude reflections), based on pairwise relative rotation measurements. It is modeled in~\eqref{eq:RP} with $d = p > 1$ (often, 2 or 3) and comes up in~\textbf{structure from motion}~\citep{arienachimson2012global}, \textbf{pose graph estimation}~\citep{calafiore2015pose}, \textbf{global registration}~\citep{chaudhury2013global}, the~\textbf{generalized Procrustes problem}~\citep{tenberge1977procrustes} and \textbf{simultaneous localization and mapping (SLAM)}~\citep{carlone2015slam}. It serves in global camera path estimation~\citep{bourmaud2015motion}, scan alignment~\citep{bonarrigo2011enhanced,wang2012LUD}, and sensor network localization and the molecule problem~\citep{cucuringu2011sensor,cucuringu2011eigenvector}. In many of these problems, translations must be estimated as well, and it has been shown in practical contexts that rotations and translations are best estimated separately~\citep[Fig.\,1]{carlone2015slam}. Here,~\eqref{eq:RP} can easily accommodate the determinant constraint: it comes down to picking one of the connected components of $\Stdpm$, as in~\citep{boumal2013MLE}. The relaxation~\eqref{eq:P} ignores this, though; see~\citep{saunderson2014semidefinite} for relaxations which explicitly model this difference (at additional computational cost). The problem of estimating orthogonal matrices appears notably in the \textbf{noncommutative little Grothendieck problem}~\citep{naor2013efficient,bandeira2013approximating}. In the latter, the relaxation~\eqref{eq:P} with linear $f$ is called \textbf{Orthogonal-Cut}, and its effect on~\eqref{eq:RP} is analyzed. The same relaxation with a nonsmooth cost, for robust estimation, is proposed and analyzed in~\citep{wang2012LUD}. See also~\citep{arrigoni2014robust} for another robust formulation of the same problem, based on low-rank--plus--sparse modeling.

\paragraph{The common lines problem in Cryo-EM} is an important biomedical imaging instance of~\eqref{eq:RP}, where orthonormal matrices are to be estimated with $d = 2, p = 3$~\citep{wang2012orientation}.

\paragraph{Phase synchronization and recovery} can be modeled with $p = d = 2$ (as phases are rotations in $\reals^2$). It is sometimes attractive to model phases as unit-modulus complex numbers instead, as is done in~\citep{singer2010angular} for phase synchronization, with the same SDP relaxation. This can be used for \textbf{clock synchronization}. See~\citep{bandeira2014tightness} for a study of the tightness of this SDP, and~\citep{cucuringu2015syncrank} for an application to \textbf{ranking}. The \textbf{Phase-Cut} algorithm for phase recovery uses the same SDP~\citep{waldspurger2012phase}, with a different linear cost.
While not explicitly treated, many of the results in this paper extend to the complex case.

\subsection{Related work}

Problem~\eqref{eq:RP} is an instance of optimization on manifolds~\cite{AMS08}. Optimization over orthonormal matrices is also studied in, e.g.,~\citep{edelman1998geometry,wen2013orthogonality}. Being equivalent to~\eqref{eq:P} with a rank constraint, \eqref{eq:RP} also falls within the scope of optimization over matrices with bounded rank~\citep{uschmajew2014convergence,mishra2011low}, where the latter is also an extension of~\citep{journee2010low}. %
The particular case of optimization over bounded-rank positive semidefinite matrices with linear constraints was already addressed in~\citep{tarazaga1993optimization}. The same without positive semidefiniteness constraint is studied recently in~\citep{liu2015snig}, also with a discussion of global optimality of second-order critical points. With a linear cost $f$, problem~\eqref{eq:RP} (which then has a quadratic cost $g$) is a subclass of quadratically constrained quadratic programming (QCQP). QCQP's and their SDP relaxations have been extensively studied, notably in~\citep{nemirovski2007quadratic,so2009improved}, with particular attention to approximation ratios. For~\eqref{eq:P}, these approximation ratios can be found in~\citep{bandeira2013approximating}.

In part owing to the success of~\eqref{eq:P} with linear $f$ in adequately solving a myriad of hard problems, there has been strong interest in developing fast, large-scale SDP solvers. The present paper is one example of such a solver, restricted to the class of problems~\eqref{eq:P}. SDPLR is a more generic such solver~\citep{sdplr,burer2005local}. See also~\citep{tu2014practical} for a review, and~\citep{desa2014global} for a recent low-complexity example with precise convergence results, but which does not handle constraints.

Much of this paper is concerned with characterizing the rank of solutions of~\eqref{eq:P}, especially with respect to how large $p$ must be allowed to grow in~\eqref{eq:RP} to solve~\eqref{eq:P}. There is also considerable value in determining under what conditions~\eqref{eq:P} admits solutions of the desired rank for a specific application, that is: when is the relaxation tight? This question is partially answered in~\citep{bandeira2014tightness} for the closely related phase synchronization problem, under a \emph{stochastic} model for the data. See~\citep{abbe2014exact} for a  proof in the stochastic block model, and~\citep{amelunxen2014edge} for a study of phase transitions in random convex programs. There also exist \emph{deterministic} tightness results, typically relying on special structure in a graph underlying the problem data. See for example~\citep{sojoudi2014exactness,saunderson2014semidefinite,sagnol2011rankone}. See also Appendix~\ref{sec:cycletight} for a deterministic proof of tightness in the case of single-cycle synchronization of rotations. The proof rests on the availability of a closed-form expression for the dual matrix $S$~\eqref{eq:S}, and for the solution to be certified. With the same ingredients, it is easy to show, for example, that~\eqref{eq:P} is tight for Max-Cut when the graph is bipartite.

Semidefinite relaxations in the form of~\eqref{eq:P} with additional constraints have also appeared in the literature. In particular, this occurs in estimation of rotations, with explicit care for the determinant constraints: \citet{saunderson2014semidefinite} explicitly constrain off-diagonal blocks to belong to the convex hull of the rotation group; this is not necessary for the orthogonal group---see Proposition~\ref{prop:Xijconvhull}. Similarly, for synchronization of permutations in joint shape matching, off-diagonal blocks are restricted to be doubly stochastic~\citep{chen2014near,huang2013consistent}. Finally, in recent work, \citet{bandeira2015nonuniquegames} study a more powerful class of synchronization problems with additional linear constraints of various forms. An example with an additional \emph{nonlinear} constraint appears in~\citep{wang2012LUD}, which imposes an upperbound on the spectral norm of $X$. All of these are motivation to generalize the framework studied here, in future work.

We mention in passing that the MaxBet and MaxDiff problems~\citep{tenberge1988maxbetmaxdiff} do not fall within the scope of this paper. Indeed, although they also involve estimating orthonormal matrices as in~\eqref{eq:RP}, their cost function has a different type of invariance, which would also lead to a different type of relaxation. %

\subsection{Notation}

The size parameters obey $1 \leq d \leq p \leq n = md$. Matrices $A\in\Rnn$ are thought of as block matrices with blocks of size $d\times d$. Subscript indexing such as $A_{ij}$ refers to the block on the \ith row and \jth column of blocks, $1 \leq i, j \leq m$. For $Z\in\Rnp$, $Z_i$ refers to the \ith slice of size $d\times p$, $1 \leq i \leq m$. The Kronecker product is written $\otimes$ and $\vecc$ vectorizes a matrix by stacking its columns on top of each other. A real number $a$ is rounded down as $\floor{a}$. The operator norm $\opnormsmall{A} = \sigma_{\textrm{max}}(A)$ is the largest singular value of a matrix, and its Frobenius norm $\frobnormsmall{A}$ is the $\ell_2$-norm of $\vecc(A)$. $\Snn$ is the set of symmetric matrices of size $n$, and $A\succeq 0$ means $A\in\Snn$ is positive semidefinite. $\symm{A} = (A+A\transpose)/2$ extracts the symmetric part of a matrix. $\Od$ is the group of orthogonal matrices of size $d$. $\ker \calL$ denotes the null-space, or kernel, of a linear operator.

\begin{figure}[p]
	\begin{center}
		\includegraphics[width=0.8\textwidth]{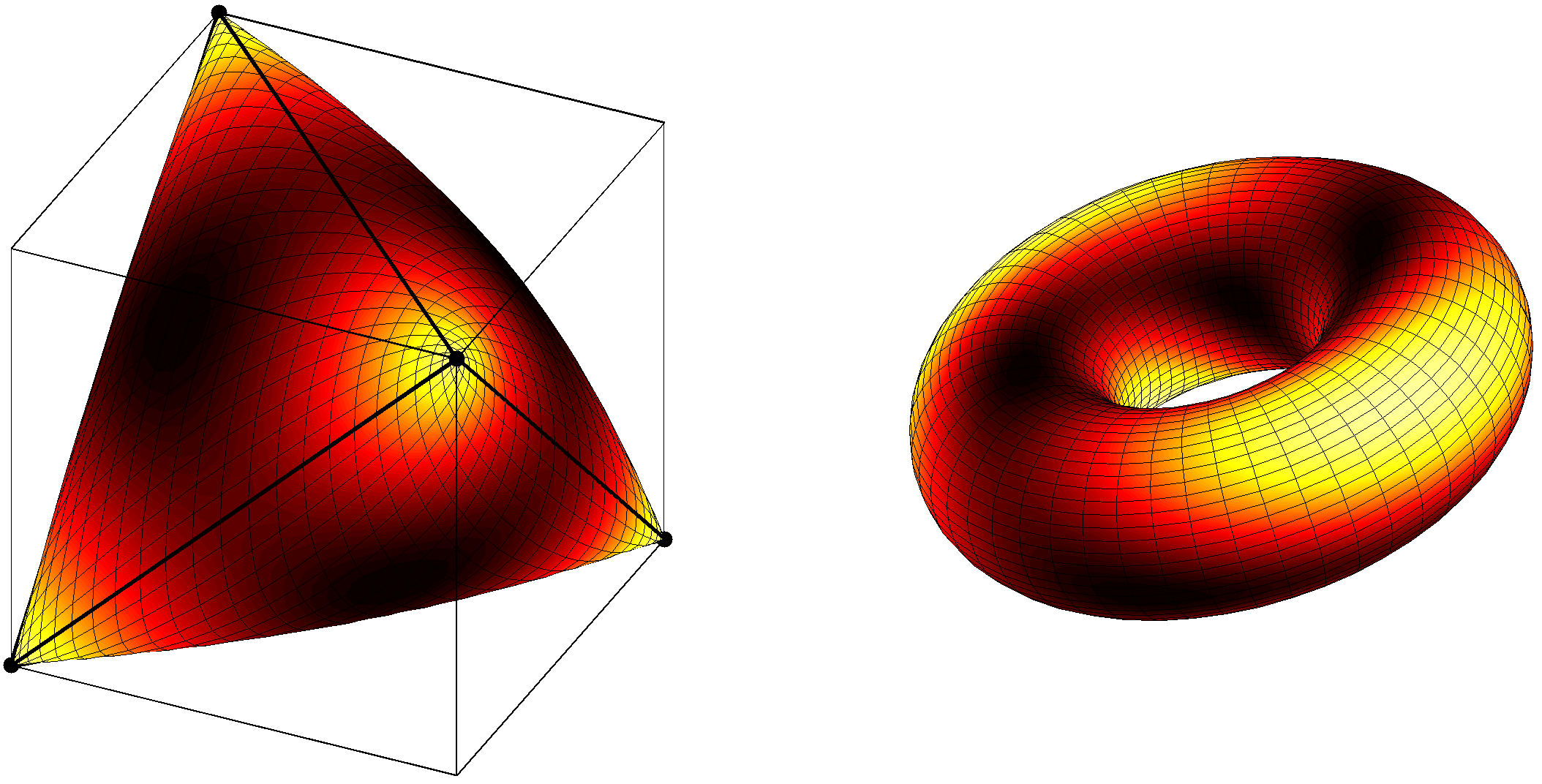}
	\end{center}
	\caption{
		(Left) For $m = 3, d = 1$, the set $\calC$~\eqref{eq:calC} contains all positive semidefinite matrices of the form $X = [1, a, b ; a, 1, c ; b, c, 1]$. It is here represented in coordinates $(a,b,c)$. The interior of the shell contains the rank-3 matrices; the four extreme points (black dots) are the rank-1 matrices; and the remainder of the boundary is the (smooth) set of rank-2 matrices. That smooth geometry breaks down at the rank-1 matrices. Note that for $d>1$, extreme points of rank $d$ are no longer isolated.
		(Right) For $m = 3, d = 1, p = 2$, the set $\Stdpm$~\eqref{eq:Stdpm} parameterizes the matrices of rank at most 2 in $\calC$ (redundantly). In this case, $\Stdpm$ corresponds to a product of three circles in 2D: $Y = [\cos \alpha_1, \sin \alpha_1 ; \cos \alpha_2, \sin \alpha_2 ; \cos \alpha_3, \sin \alpha_3] \in \Stdpm$. One of these degrees of freedom is redundant, because the factorization of $X \in \calC$ as $YY\transpose$ is not unique. The figure represents the remaining degrees of freedom after fixing $\alpha_1 = 0$. Notice how accepting the redundancy in the parameterization allows for a smooth representation of the nonsmooth set of bounded rank matrices in $\calC$.
		Color codes for $\frobnormsmall{X} = \frobnormsmall{YY\transpose}$.}
	\label{fig:geometry}
\end{figure}

\section{Geometry} \label{sec:geometry}

Both search spaces of~\eqref{eq:RP} and~\eqref{eq:P} enjoy rich geometry, which leads to efficient analytical and numerical tools for the study of these optimization problems. The former is a smooth Riemannian manifold, while the latter is a compact convex set. Figure~\ref{fig:geometry} depicts the two.

\subsection{Smooth geometry of the rank-restricted search space}
\label{sec:geometryriemannian}

Endow $\Rnp$ with the classical Euclidean metric $\inner{U_1}{U_2} = \Trace(U_1\transpose U_2^{})$, corresponding to the Frobenius norm: $\sqfrobnorm{U} = \inner{U}{U}$. We view the search space of~\eqref{eq:RP} as a submanifold of $\Rnp$ and endow it with the Riemannian submanifold geometry~\citep{AMS08}. First, define a linear operator $\sbdop \colon \Rnn \to \Snn$ which symmetrizes diagonal blocks and zeroes out all other blocks:
\begin{align}
	\sbd{M}_{ij} & = \begin{cases}
	\frac{M_{ii}^{} + M_{ii}\transpose}{2} & \textrm{if } i = j, \\
	0 & \textrm{otherwise.}
	\end{cases}
	\label{eq:sbd}
\end{align}
This allows for a simple definition of the manifold via an equality constraint as
\begin{align}
	\Stdpm & = \left\{ Y \in \Rnp : \sbdsmall{YY\transpose\,} = I_n \right\}.
    \label{eq:Stdpmsbd}
\end{align}
The set is non-empty if $p \geq d$. It is connected if $p > d$. Counting dimensions yields $\dim \Stdpm = np - md(d+1)/2$.  The tangent space to $\Stdpm$ at $Y$ is a subspace of $\Rnp$ obtained by differentiating the equality constraint:
\begin{align}
	\T_Y\Stdpm & = \left\{ \dot Y \in \Rnp : \sbd{\dot Y Y\transpose + Y \dot Y\transpose\,}
	  = 0 \right\}.
	\label{eq:tangentspace}
\end{align}
Among the tangent vectors are all vectors of the form $Y\Omega$, for $\Omega \in \Rpp$ skew-symmetric: these correspond to ``vertical directions'', in the sense that following them does not affect the product $YY\transpose$ (at first order). Each tangent space is equipped with a restriction of the metric $\inner{\cdot}{\cdot}$, thus making $\Stdpm$ a Riemannian submanifold of $\Rnp$. The orthogonal projector from the embedding space $\Rnp$ to the tangent space at $Y$ is %
\begin{align}
	\Proj_Y(Z) & = Z - \sbdsmall{ZY\transpose\,} Y.
	\label{eq:Proj}
\end{align}
The total computational cost of a projection is thus $\mathcal{O}(m \cdot d^2 p) = \mathcal{O}(n d p)$ flops.

The optimization problem~\eqref{eq:RP} involves a function $g(Y) = f(YY\transpose)$ defined over $\Rnp$. Denote its classical, Euclidean gradient at $Y$ as $\nabla g(Y)$. The Riemannian gradient of $g$ at $Y$, $\grad\,g(Y)$, is defined as the unique tangent vector at $Y$ such that, for all tangent $\dot Y$, %
$
	\innersmall{\grad\,g(Y)}{\dot Y} = \innersmall{\nabla g(Y)}{\dot Y} . %
$
Naturally, this is given by the projection of the classical gradient to the tangent space~\citep[eq.\,(3.37)]{AMS08}:
\begin{align}
	\grad\,g(Y) & = \Proj_Y\left(\nabla g(Y)\right) = 2\,\Proj_Y \left(\nabla f(YY\transpose) Y \right),
	\label{eq:gradg}
\end{align}
where $\nabla f(X)$ is the classical gradient of $f$, a symmetric\footnote{$\nabla f(X)$ is symmetric because $f$ is formally defined over the symmetric matrices. If the gradient of $f$ over the square matrices is not symmetric, $\nabla f(X)$ is obtained by extracting its symmetric part.} matrix of size $n$, and we used~\eqref{eq:nablagnablaf}. Furthermore, denote by $\nabla^2 g(Y)$ the classical Hessian of $g$ at $Y$. This is a symmetric operator on $\Rnp$. The Riemannian Hessian of $g$ at $Y$ is a symmetric operator on the tangent space at $Y$ obtained as the projection of the derivative of the gradient vector field~\citep[eq.\,(5.15)]{AMS08}:
\begin{align}
	\Hess\,g(Y)[\dot Y] & = \Proj_Y\!\left( \D\big( Y \mapsto \Proj_Y\left(\nabla g(Y)\right) \big)(Y)[\dot Y] \right) \nonumber\\
	& = \Proj_Y\!\left( \nabla^2 g(Y)[\dot Y] - \sbd{\nabla g(Y) Y\transpose\,}\dot Y \right),
	\label{eq:Hessg}
\end{align}
where $\D$ denotes a classical directional derivative and we used $\Proj_Y \circ \Proj_Y = \Proj_Y$. For future reference, we note these expressions of the derivatives of $g$ in terms of those of $f$:
\begin{align}
	\nabla g(Y) & = 2\nabla f(X) Y, \ \textrm{ and} \label{eq:nablagnablaf} \\
	\nabla^2 g(Y)[\dot Y] & = 2\left( \nabla^2 f(X)[\dot X]Y + \nabla f(X)\dot Y \right), \textrm{ with } \dot X = \dot Y Y\transpose + Y\dot Y\transpose.
	\label{eq:nabla2gnabla2f}
\end{align}

Optimization algorithms on Riemannian manifolds typically are iterative. As such, they require a means of moving away from a point $Y$ along a prescribed tangent direction $\dot Y$, to reach a new point on the manifold: the next iterate. Since $Y+\dot Y$ does not, in general, %
belong to the manifold, extra operations are required. \emph{Retractions} achieve exactly this~\citep[\S\,4.1]{AMS08}. One possible retraction for~\eqref{eq:Stdpm} is as follows. For each $d\times p$ ``slice'' $i$ in $\{1, \ldots, m\}$,
\begin{align}
	\left(\Retr_Y(\dot Y)\right)_i & = U_i^{}V_i\transpose, \quad \textrm{ with } \qquad  Y_i + \dot Y_i = U_i^{}\Sigma_i^{} V_i\transpose,
	\label{eq:retraction}
\end{align}
where $U_i^{}\Sigma_i^{} V_i\transpose$ is a thin singular value decomposition of the \ith slice $Y_i + \dot Y_i$. This retraction projects each slice of $Y + \dot Y$ to the closest orthonormal matrix. Consequently, this is even a second-order retraction~\citep{absil2012retractions}. The total cost of computing a retraction is $\mathcal{O}(m\cdot (p^2 d + d^3))
= \mathcal{O}(np^2)$ flops.

\subsection{Convex geometry of the full search space}
\label{sec:geometryconvex}

The optimization problem~\eqref{eq:P} is defined over the compact convex set
\begin{align*}
	\calC & = \{ X \in \Snn : X \succeq 0 \textrm{ and } X_{ii} = I_d \textrm{ for } i = 1\ldots m\}.
\end{align*}
For $d = 1$, this is the \emph{elliptope}, or set of correlation matrices%
~\citep{laurent1996facial}. %
Often, we hope to recover matrices $X$ in $\calC$ such that $X$ has rank $d$, or such that off-diagonal blocks $X_{ij}$ are orthogonal. The following proposition shows that these two considerations are equivalent, and that the convex relaxation leading to $\calC$ is tight in that sense (the tightest relaxation would consider the convex hull of rank-$d$ matrices in $\calC$, but this is difficult to handle\footnote{Let $\tilde \calC$ be the convex hull of rank-$d$ matrices in $\calC$. The extreme points of $\tilde \calC$ are these matrices~\citep[Cor.\,18.3.1]{rockafellar1997convex}. Thus, optimizing a linear cost function over $\tilde \calC$ solves~($\textrm{RP}_d$), which is NP-hard. Hence, there probably does exist an efficient representation of $\tilde \calC$.}).
\begin{proposition}
	For all $X\in\calC$, all blocks $X_{ij}$ are in the convex hull of $\Od$, that is, $\sigma_{\mathrm{max}}(X_{ij}) \leq 1$. Furthermore, $\rank(X) = d$ if and only if $X_{ij} \in \Od$ for all $i, j$.
	\label{prop:Xijconvhull}
\end{proposition}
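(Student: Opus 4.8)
The plan is to exploit the factorization $X = YY\transpose$ with $Y \in \Stdpm$, which is available for every $X \in \calC$ by the discussion around~\eqref{eq:calC}. Write $Y_i \in \St(d,p)$ for the \ith slice, so that $X_{ij} = Y_i^{} Y_j\transpose$ and $Y_i^{} Y_i\transpose = I_d$. For the first claim, I would bound the largest singular value of $X_{ij} = Y_i^{} Y_j\transpose$ directly: since $Y_i$ and $Y_j$ each have orthonormal rows, $\opnormsmall{Y_i} = \opnormsmall{Y_j} = 1$, hence $\opnormsmall{X_{ij}} \leq \opnormsmall{Y_i}\opnormsmall{Y_j\transpose} = 1$. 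Then I would recall the standard fact that the convex hull of $\Od$ is exactly the set of $d\times d$ matrices with all singular values at most $1$ (the operator-norm unit ball), which follows from the singular value decomposition together with the observation that any diagonal matrix with entries in $[-1,1]$ is a convex combination of $\pm 1$ diagonal matrices, each lying in $\Od$. This gives $X_{ij} \in \mathrm{conv}(\Od)$.

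For the equivalence, the easy direction is: if $\rank(X) = d$, then since $X \succeq 0$ with $d\times d$ identity diagonal blocks, a rank-$d$ factorization $X = YY\transpose$ forces $Y \in \reals^{md \times d}$; each slice $Y_i$ is then a $d\times d$ matrix with $Y_i^{}Y_i\transpose = I_d$, i.e. $Y_i \in \Od$, and therefore $X_{ij} = Y_i^{}Y_j\transpose \in \Od$ as a product of orthogonal matrices. Conversely, suppose $X_{ij} \in \Od$ for all $i,j$. Fix any factorization $X = YY\transpose$ with $Y \in \Stdpm$, $Y \in \Rnp$; I want to show $\rank(Y) = d$, equivalently $\rank(X) = d$. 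The key relation is $X_{1j}^{}X_{1j}\transpose = I_d$ for each $j$, i.e. $Y_1^{} Y_j\transpose Y_j^{} Y_1\transpose = I_d$. Since $Y_1^{}Y_1\transpose = I_d$ as well, this says that $Y_j\transpose Y_j^{}$ acts as the identity on the row space of $Y_1$. I would then argue that in fact all the $Y_j$ share the same $d$-dimensional row space: using $X_{1j} \in \Od$ one gets $Y_j = X_{1j} Y_1 + (\text{something orthogonal to } \mathrm{rowspace}(Y_1))$, and then the constraint $Y_j^{}Y_j\transpose = I_d$ combined with $X_{1j}$ being orthogonal kills the orthogonal complement term. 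Concretely, write $Y_1^+$ for the pseudoinverse; since $Y_1 Y_1\transpose = I_d$, the matrix $P = Y_1\transpose Y_1$ is the orthogonal projector onto $\mathrm{rowspace}(Y_1)$, and one checks $Y_j P = X_{1j} Y_1$, so $\opnormsmall{Y_j P}^2 = \opnormsmall{X_{1j}}^2 = 1$ acting on that subspace; but $\opnormsmall{Y_j}^2 = 1$ and $\frobnormsmall{Y_j}^2 = d = \frobnormsmall{Y_j P}^2$ since $X_{1j}$ is orthogonal, forcing $Y_j(I - P) = 0$. Hence every row of $Y$ lies in the fixed $d$-dimensional space $\mathrm{rowspace}(Y_1)$, so $\rank(Y) \leq d$; combined with $\rank(Y) \geq \rank(Y_1Y_1\transpose) = d$, we get $\rank(X) = d$.

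The main obstacle I anticipate is the converse direction, specifically making rigorous the step that orthogonality of every $X_{1j}$ forces all slices into a common row space. The Frobenius-versus-operator norm bookkeeping sketched above is the cleanest route: the identity $\frobnormsmall{Y_j}^2 = \frobnormsmall{Y_j P}^2 + \frobnormsmall{Y_j(I-P)}^2 = d$ together with $\frobnormsmall{Y_j P}^2 = \frobnormsmall{X_{1j} Y_1}^2 = \frobnormsmall{Y_1}^2 = d$ (valid precisely because $X_{1j}$ is orthogonal) immediately yields $Y_j(I-P) = 0$. One must only be careful to first establish $Y_j P = X_{1j} Y_1$, which follows from $Y_j Y_1\transpose = X_{1j}$ and $P = Y_1\transpose Y_1$ via $Y_j P = (Y_j Y_1\transpose) Y_1 = X_{1j} Y_1$. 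Everything else is routine linear algebra on the factorization, and the first claim is an immediate norm submultiplicativity estimate plus the standard convex-hull characterization of $\Od$.
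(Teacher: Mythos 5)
Your proof is correct, up to a harmless indexing slip: with the convention $X_{ij} = Y_i^{}Y_j\transpose$, you have $Y_jP = Y_j^{}Y_1\transpose Y_1^{} = X_{j1}Y_1 = X_{1j}\transpose Y_1$, not $X_{1j}Y_1$; this does not affect the argument because $X_{1j}\transpose\in\Od$ iff $X_{1j}\in\Od$, so the Frobenius-norm conservation step goes through unchanged.

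For the first claim you take a genuinely different route from the paper. The paper argues directly on the positive semidefinite matrix $X$: it restricts to the $2d\times 2d$ principal submatrix formed by blocks $(i,i),(i,j),(j,i),(j,j)$, which must be positive semidefinite, and applies a Schur-complement criterion to get $X_{ij}\transpose X_{ij}^{}\preceq I_d$. You instead invoke the factorization $X = YY\transpose$ and use submultiplicativity of the operator norm. Both are correct and elementary; the paper's version has the small advantage of not requiring the existence of a Stiefel factorization at this point (it uses only $X\succeq 0$), whereas yours leans on the factorization you already plan to use for the second claim. For the converse in the second claim, the two proofs are essentially the same idea --- both show that all slices share a common $d$-dimensional row space by using the projectors $Y_k\transpose Y_k^{}$ --- but yours is more explicit: where the paper asserts ``since $Y_i$ remains unaffected by such a projection \dots they must span the same subspace,'' you spell out the Pythagorean identity $\frobnormsmall{Y_j}^2 = \frobnormsmall{Y_jP}^2 + \frobnormsmall{Y_j(I-P)}^2$ and observe that orthogonality of $X_{j1}$ forces $\frobnormsmall{Y_jP}^2 = \frobnormsmall{Y_1}^2 = d$, killing the complementary term. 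That extra explicitness is an improvement in rigor over the paper's slightly hand-wavy step.
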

\begin{proof}
	Since $X$ is positive semidefinite, for all $i \neq j$, the submatrix formed by the blocks $X_{ii}, X_{ij}, X_{ji}$ and $X_{jj}$ is positive semidefinite. By Schur, this holds if and only if $X_{ij}\transpose X_{ij}^{} \preceq I_d$, which in turn happens if and only if all singular values of $X_{ij}$ are at most 1. The set of such matrices is the convex hull of all orthogonal matrices of size $d$~\citep{saunderson2014hulls}.
	
	Consider $Y \in \Stdpm$ such that $\rank(X) = p$ and $X = YY\transpose$. Clearly, if $p = d$, $X_{ij} = Y_i^{}Y_j\transpose$ is orthogonal, since all $Y_i$'s are orthogonal. Conversely, if $X_{ij}$ is orthogonal, then the rows of $Y_i$ and $Y_j$ span the same subspace. Indeed, $Y_i^{}Y_j\transpose Y_j^{}Y_i\transpose = I_d$. Multiply by $Y_i$ on the right. Now notice that $Y_k\transpose Y_k^{}$ is an orthogonal projector onto the subspace spanned by the rows of $Y_k$. Since $Y_i$ remains unaffected by such a projection first on the subspace of $Y_j$ then again on the subspace of $Y_i$, they must span the same subspace. Hence, fixing $j = 1$, for each $i$, there exists $Q_i \in \Od$ such that $Y_i = Q_i Y_1$. Finally, $Y = \diag(I_d, Q_2, \ldots, Q_m) (\mathds{1}_{m\times 1} \otimes Y_1)$ (where $\otimes$ is the Kronecker product), which confirms that $Y$ and $X$ have rank $d$. Notice that this proof further shows that $X$ has rank $d$ if and only if there is a spanning tree of edges $(i, j)$ on an $m$-nodes graph such that the $X_{ij}$'s are orthogonal (in which case they are all orthogonal).
\end{proof}
The set $\calC$ may be decomposed into \emph{faces} of various dimensions.
\begin{definition}[faces, \S18 in \citep{rockafellar1997convex}]
	A face of $\calC$ is a convex subset $\calF$ of $\calC$ such that every (closed) line segment in $\calC$ with a relative interior point in $\calF$ has both endpoints in $\calF$. The empty set and $\calC$ itself are faces of $\calC$. %
\end{definition}
By~\citep[Thm.\,18.2]{rockafellar1997convex}, the collection of relative interiors\footnote{The relative interior of a singleton is the singleton.} of the non-empty faces forms a partition of $\calC$. That is, each $X\in\calC$ is in the relative interior of exactly one face of $\calC$, called $\calF_X$ .
Furthermore, all faces of $\calC$ are exposed~\citep[Cor.\,1]{ramana1995geometric}, that is, for every face $\calF$, there exists a linear function $f$ such that $\calF$ is the set of solutions of~\eqref{eq:P}.
Of particular interest are the zero-dimensional faces of $\calC$ (singletons), called its \emph{extreme points}.
\begin{definition}[Extreme and exposed points]
	$X\in\calC$ is an extreme point of $\calC$ if there does not exist $X', X'' \in \calC\backslash\{X\}$ and $0 < \lambda < 1$ such that $X = \lambda X' + (1-\lambda) X''$. $X$ is an \emph{exposed point} of $\calC$ if there exists $C$ such that $X$ is the unique maximizer of $\inner{C}{X}$ in $\calC$.
\end{definition}
In other words, $X$ is extreme if it does not lie on an open line segment included in $\calC$. Since $\calC$ is compact, it is the convex hull of its extreme points~\citep[Cor.\,18.5.1]{rockafellar1997convex}.
Extreme points are of interest notably because they often arise as the solution of optimization problems. Specifically, if $f$ is a concave function (in particular, if $f$ is linear), then $f$ attains its minimum on $\calC$ at one of its extreme points~\citep[Cor.\,32.3.2]{rockafellar1997convex}.

Following the construction in the proof of~\citep[Thm.\,2.1]{pataki1998rank}, given $Y\in\Rnp$ of full rank
such that $X = YY\transpose$ ($\rank(X) = p$), we find that
\begin{align}
\calF_X & = \left\{  \tilde X %
= Y(I_p + A)Y\transpose : A \in \ker \calL_X \textrm{ and } I_p + A \succeq 0\right\}, \textrm{ with}
\label{eq:calF} \\
\calL_X & \colon \Spp \to (\Sdd)^m  \colon A \mapsto \calL_X(A) = \left( Y_1^{}AY_1\transpose, \cdots, Y_m^{}AY_m\transpose \right).
\label{eq:calL}
\end{align}
The dimension of $\calF_X$ is the dimension of the kernel of $\calL_X$. The rank-nullity theorem gives a lowerbound (see also Theorem~\ref{thm:dimFbounds} for an upperbound):
\begin{align}
	\dim \calF_X = \frac{p(p+1)}{2} - \rank \calL_X \geq \frac{p(p+1)}{2} - m \frac{d(d+1)}{2} \triangleq \Delta.
	\label{eq:dimF}
\end{align}
It follows that extreme points $X$ (i.e., points such that $\dim \calF_X = 0$) have small rank:
\begin{align}
	d \ \leq \ \rank(X) \ \leq \ p^* := \left(\sqrt{1+4md(d+1)}-1\right)/2.
	\label{eq:patakirank}
\end{align}
Note that $\Delta \geq 0$ when $p \geq p^*$.
\begin{remark}
	For linear $f$,~\eqref{eq:P} admits an extreme point as global optimizer, so that~\eqref{eq:RP} and~\eqref{eq:P} have the same optimal value as soon as $p \geq p^*$~\eqref{eq:patakirank}. In other words: for linear $f$,~\eqref{eq:RP} is not NP-hard if $p \geq p^*$.
	\label{rem:linearlargepiseasy}
\end{remark}
Not all feasible $X$'s with rank as in~\eqref{eq:patakirank} are extreme. For example, setting $d = 1$ and $m \geq 3$ as in Figure~\ref{fig:geometry}, select two distinct, admissible matrices of rank 1, $X_0$ and $X_1$. For all $0 < \lambda < 1$, the matrix $X_\lambda = \lambda X_1 + (1-\lambda)X_0$, lying on the open line segment between $X_0$ and $X_1$, is admissible and has rank 2. Thus, $X_\lambda$ satisfies~\eqref{eq:patakirank}, but it is not an extreme point, by construction. Notwithstanding, the expectation that $\calL_X$ is generically of full rank suggests that almost all feasible $X$'s satisfying~\eqref{eq:patakirank} should be extreme; an intuition that is supported by Figure~\ref{fig:geometry}. More generally, in Theorem~\ref{thm:genericfacedimension}, we prove for $d=1$ that $\dim \calF_X = \Delta$ for almost all $X$ of rank $p$. %

Many applications look for solutions of rank $d$. All $X$'s of rank $d$ are exposed (hence extreme), meaning they can all be recovered as unique solutions of~\eqref{eq:P}.
\begin{proposition}
	For all $X\in\calC$, $\sqfrobnormsmall{X} \leq m^2d$. Furthermore, $\sqfrobnormsmall{X} = m^2d$ if and only if $\rank(X) = d$. In particular, each $X\in\calC$ of rank $d$ is an exposed extreme point of $\calC$.
	\label{prop:rankdmaxnorm}
\end{proposition}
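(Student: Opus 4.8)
The plan is to reduce the norm inequality to a blockwise Frobenius-norm count, using Proposition~\ref{prop:Xijconvhull} to control the off-diagonal blocks, and then to exhibit an explicit exposing functional for the rank-$d$ matrices. \emph{For the bound:} I would write $\sqfrobnormsmall{X} = \sum_{i,j=1}^m \sqfrobnormsmall{X_{ij}}$. The $m$ diagonal blocks equal $I_d$ and contribute $md$ in total. For $i \neq j$, Proposition~\ref{prop:Xijconvhull} gives $\sigma_{\mathrm{max}}(X_{ij}) \leq 1$; since $X_{ij}$ is $d\times d$ it has exactly $d$ singular values, so $\sqfrobnormsmall{X_{ij}} = \sum_{k=1}^d \sigma_k(X_{ij})^2 \leq d$. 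Summing the $m^2 - m$ off-diagonal terms yields $\sqfrobnormsmall{X} \leq md + (m^2-m)d = m^2 d$.

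\emph{For the equality case:} equality forces $\sqfrobnormsmall{X_{ij}} = d$ for every $i \neq j$; since each singular value lies in $[0,1]$ and there are only $d$ of them, $\sum_k \sigma_k(X_{ij})^2 = d$ forces every singular value of $X_{ij}$ to equal $1$, i.e.\ $X_{ij} \in \Od$ (while $X_{ii} = I_d \in \Od$ trivially). By the second assertion of Proposition~\ref{prop:Xijconvhull}, having $X_{ij}\in\Od$ for all $i,j$ is equivalent to $\rank(X) = d$; conversely, if $\rank(X) = d$ then every block is orthogonal and contributes exactly $d$, so $\sqfrobnormsmall{X} = m^2 d$. This settles the first two claims.

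\emph{For exposedness:} fix $X \in \calC$ with $\rank(X) = d$ and take the exposing matrix $C = X$ itself. For any $X' \in \calC$, Cauchy--Schwarz together with the norm bound just proved give $\innersmall{X}{X'} \leq \frobnormsmall{X}\,\frobnormsmall{X'} \leq m^2 d = \innersmall{X}{X}$, so $X$ maximizes $\innersmall{X}{\cdot}$ over $\calC$. If $X'$ also attains this value, equality in Cauchy--Schwarz forces $X' = \lambda X$ with $\lambda \geq 0$, and then $\lambda m^2 d = m^2 d$ gives $\lambda = 1$, hence $X' = X$: the maximizer is unique, so $X$ is an exposed point of $\calC$. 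Finally, an exposed point is extreme, since $X = \lambda X' + (1-\lambda)X''$ with $X', X'' \in \calC$ and $0 < \lambda < 1$ would force $\innersmall{X}{X'} = \innersmall{X}{X''} = m^2 d$ and hence, by uniqueness, $X' = X'' = X$.

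I do not anticipate a genuine obstacle: the only point requiring care is the equality analysis, namely combining the Cauchy--Schwarz equality condition with the norm ceiling to pin down uniqueness of the maximizer, while noting that the degenerate case $X = 0$ never arises because $\frobnormsmall{X} \geq \sqrt{md} > 0$.
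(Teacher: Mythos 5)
Your proof is correct and follows essentially the same route as the paper: the blockwise Frobenius decomposition plus the singular-value bound from Proposition~\ref{prop:Xijconvhull}, and exposedness via Cauchy--Schwarz with $C = X$ as the exposing functional. The only cosmetic differences are that you separate the diagonal and off-diagonal blocks explicitly (the paper treats them uniformly) and you spell out the standard ``exposed implies extreme'' implication, which the paper leaves implicit.
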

\begin{proof}
	Let $\sigma_1(X_{ij}) \geq \cdots \geq \sigma_d(X_{ij})$ denote the singular values of $X_{ij}$. By Proposition~\ref{prop:Xijconvhull}, $\sigma_k(X_{ij}) \leq 1$ for all $i,j,k$. Hence,
	\begin{align}
		\sqfrobnormsmall{X} & = \sum_{i,j = 1}^m \sqfrobnormsmall{X_{ij}} = \sum_{i,j = 1}^m \sum_{k=1}^{d} \sigma_k^2(X_{ij}) \leq m^2d.
	\end{align}
	The upperbound is attained if and only if $\sigma_k(X_{ij}) = 1$ for all $i,j,k$, thus, if and only all $X_{ij}$'s are orthogonal. By Proposition~\ref{prop:Xijconvhull}, this is the case if and only if $\rank(X) = d$. Now consider $X$ has rank $d$. We show it is exposed (and hence extreme):
	\begin{align}
		\max_{\hat X \in \calC} \ \innersmall{X}{\hat X} & \leq \max_{\sqfrobnormsmall{\hat X} \leq m^2d} \ \innersmall{X}{\hat X} \leq \frobnormsmall{X} \cdot \max_{\sqfrobnormsmall{\hat X} \leq m^2d} \ \frobnormsmall{\hat X} = m^2d.
	\end{align}
	The second inequality follows by Cauchy-Schwarz, and equality is attained if and only if $\hat X = X$, which is in $\calC$. Thus, both max problems admit $X$ as unique solution, confirming that $X$ is an exposed extreme point.
\end{proof}
\begin{remark}
	Proposition~\ref{prop:rankdmaxnorm} is an extension of~\citep[Thm.\,1]{malick2007spherical} to the case $d > 1$. We note that the proof in that reference does not generalize to $d > 1$. %
	\label{rem:normsandrank}
\end{remark}

\section{From second-order critical points $Y$ to KKT points $X$} \label{sec:analysis}

In this section, we show that rank-deficient second-order critical points $Y$ of~\eqref{eq:RP} yield KKT points $X = YY\transpose$ of ~\eqref{eq:P}. Furthermore, when $Y$ is second-order critical but $X$ is not KKT,
it is shown how to escape the saddle point by increasing $p$. If $p$ increases all the way to $n$, then all second-order critical points reveal KKT points. The proofs parallel those in~\citep{journee2010low}. %
The main novelty is explicit bounds on $p$ such that \emph{all} second-order critical points of~\eqref{eq:RP} reveal KKT points. The proofs bring us to consider the facial structure of $\calC$.

A key ingredient for all proofs in this section is the availability of an explicit matrix $S(X)$~\eqref{eq:S} which is positive semidefinite if and only if $X$ is KKT (Theorem~\ref{thm:KKTS}). The formula for $S$ is simply read off from the first-order optimality conditions of~\eqref{eq:RP}, owing to smoothness of the latter.

\begin{lemma}[Necessary optimality conditions for~\eqref{eq:P}]\label{lem:kktpoint}
$X\in\calC$ is called a \emph{KKT point} for~\eqref{eq:P} if there exist a symmetric matrix $\hat S\in\Snn$ and a symmetric, block-diagonal matrix $\hat \Lambda \in \Snn$ (dual variables) such that
\begin{align*}
\hat S X & = 0, & \hat S & = \nabla f(X) + \hat \Lambda, & \textrm{ and}&  & \hat S & \succeq 0.
\end{align*}
If $X$ is a local optimizer for~\eqref{eq:P}, then $X$ is a KKT point. If $f$ is convex, all KKT points are global optimizers. %
\end{lemma}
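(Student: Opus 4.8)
The plan is to read off the stated conditions as the Karush--Kuhn--Tucker conditions of \eqref{eq:P}, viewed as a smooth program with one affine equality constraint and one semidefinite (conic) constraint, and then to prove the converse implication by a short convexity computation.

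For the necessity part, I would first rewrite $\calC = \{X \in \Snn : \sbd{X} = I_n\} \cap \{X \in \Snn : X \succeq 0\}$, using that for symmetric $X$ the condition $\sbd{X} = I_n$ is exactly $X_{ii} = I_d$ for all $i$. The operator $\sbdop$, restricted to $\Snn$, is the orthogonal projector onto the subspace of symmetric block-diagonal matrices; in particular it is self-adjoint and surjective onto that subspace. To invoke the KKT theorem I need a constraint qualification, and here Robinson's CQ holds at \emph{every} feasible $\bar X$: taking $H_0 = I_n - \bar X$ we have $\sbd{H_0} = I_n - I_n = 0$ while $\bar X + H_0 = I_n \succ 0$ lies in the interior of the positive semidefinite cone, which is precisely the strict-feasibility form of the qualification. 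With the CQ in force, the classical first-order necessary conditions for smooth minimization over the intersection of an affine subspace and a closed convex cone supply, at a local minimizer $X$, a multiplier for the equality constraint --- which, passed through the adjoint $\sbdop^* = \sbdop$, is a symmetric block-diagonal matrix $\hat\Lambda$ --- and a multiplier $\hat S$ for the conic constraint lying in the dual (self-dual) cone, i.e.\ $\hat S \succeq 0$. Lagrangian stationarity then reads $\hat S = \nabla f(X) + \hat\Lambda$, and complementary slackness reads $\inner{\hat S}{X} = \trace(\hat S X) = 0$; since $\hat S$ and $X$ are both positive semidefinite, $\trace(\hat S X) = 0$ forces $\hat S X = 0$. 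This is exactly the claimed KKT system.

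For the converse, suppose $f$ is convex and $X$ is a KKT point with certificates $\hat S, \hat\Lambda$. For any competitor $\hat X \in \calC$, convexity gives $f(\hat X) - f(X) \geq \inner{\nabla f(X)}{\hat X - X} = \inner{\hat S}{\hat X - X} - \inner{\hat\Lambda}{\hat X - X}$. The $\hat\Lambda$ term vanishes: since $\hat\Lambda$ is symmetric block-diagonal, $\inner{\hat\Lambda}{\hat X - X} = \inner{\hat\Lambda}{\sbd{\hat X} - \sbd{X}} = \inner{\hat\Lambda}{I_n - I_n} = 0$, both $\hat X$ and $X$ being in $\calC$. The $\hat S$ term is nonnegative: $\inner{\hat S}{\hat X - X} = \inner{\hat S}{\hat X} - \inner{\hat S}{X} = \inner{\hat S}{\hat X} \geq 0$, using $\hat S X = 0$ (hence $\inner{\hat S}{X} = 0$) together with $\hat S, \hat X \succeq 0$. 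Therefore $f(\hat X) \geq f(X)$ for all $\hat X \in \calC$, so $X$ is a global minimizer. I expect the delicate point to lie entirely in the necessity half, specifically in pinning down the constraint qualification: it is what guarantees that the multiplier attached to the semidefinite constraint genuinely lands in the dual cone (so $\hat S \succeq 0$, not merely $\hat S$ symmetric) and that complementary slackness is available; everything else is a mechanical unwinding of the KKT theorem plus, for the converse, the two inner-product identities above, which hinge only on $\hat\Lambda$ being block-diagonal and on $\hat S X = 0$.
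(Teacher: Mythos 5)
Your argument is correct and follows the same route as the paper: both invoke the standard first-order conic KKT theory for a smooth objective over the intersection of an affine subspace with the semidefinite cone, with the constraint qualification verified via strict feasibility of $I_n$, and both obtain $\hat S X = 0$ from $\trace(\hat S X) = 0$ with $\hat S, X \succeq 0$. The paper simply condenses all of this into a citation of Theorems 3.25, 3.34 and Example 3.36 of Ruszczy\'nski together with the Slater observation, whereas you unpack the ingredients explicitly.
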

\begin{proof}
Apply Theorems 3.25 and 3.34, and Example 3.36 in~\citep{ruszczynski2006nonlinear}. KKT conditions are necessary since Slater's condition holds: $I_n$ is feasible for~\eqref{eq:P} and it is strictly positive definite.
\end{proof}

\begin{lemma}[Necessary optimality conditions for~\eqref{eq:RP}]\label{lem:localopt}
Let $Y\in\Stdpm$ and $X = YY\transpose$. A \emph{critical point} of~\eqref{eq:RP} satisfies $\grad\,g(Y) = 0$, that is,
\begin{align}
	\Big(\nabla f(X) - \sbd{\nabla f(X) X} \Big)Y = 0.
	\label{eq:firstordercondition}
\end{align}
A \emph{second-order critical point} is a critical point which satisfies $\Hess\,g(Y) \succeq 0$, that is,
\begin{align}
	\textrm{ for all } \dot Y \in \T_Y\Stdpm, \quad \inner{\dot Y}{\nabla^2 g(Y)[\dot Y] - \sbd{\nabla g(Y) Y\transpose\,}\dot Y} \geq 0.
	\label{eq:secondordercondition}
\end{align}
If $Y$ is a local optimizer for~\eqref{eq:RP}, then it is a second-order critical point.
\end{lemma}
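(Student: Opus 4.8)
The plan is to unfold the expressions for the Riemannian gradient~\eqref{eq:gradg} and Hessian~\eqref{eq:Hessg} derived in Section~\ref{sec:geometryriemannian}, and then to invoke the standard first- and second-order necessary optimality conditions for smooth optimization on a Riemannian manifold~\citep{AMS08}. The first two parts (identifying the conditions) are pure bookkeeping; the third part (local optimizer $\Rightarrow$ second-order critical) is where a little care is needed.

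First I would establish the first-order condition~\eqref{eq:firstordercondition}. Starting from $\grad\,g(Y) = \Proj_Y(\nabla g(Y))$ with $\Proj_Y(Z) = Z - \sbdsmall{ZY\transpose\,}Y$~\eqref{eq:Proj} and $\nabla g(Y) = 2\nabla f(X)Y$~\eqref{eq:nablagnablaf}, and using $YY\transpose = X$ together with the fact that $\sbdop$ only sees diagonal blocks, one gets $\grad\,g(Y) = 2\nabla f(X)Y - 2\,\sbd{\nabla f(X)X}\,Y = 2\big(\nabla f(X) - \sbd{\nabla f(X)X}\big)Y$. Hence $\grad\,g(Y) = 0$ is equivalent to~\eqref{eq:firstordercondition}.

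Next, the second-order condition. Since $\Proj_Y$ is an orthogonal projector it is self-adjoint, and for tangent $\dot Y$ we have $\Proj_Y(\dot Y) = \dot Y$. Applying this to~\eqref{eq:Hessg} gives, for every $\dot Y \in \T_Y\Stdpm$,
\begin{align*}
\inner{\dot Y}{\Hess\,g(Y)[\dot Y]} = \inner{\Proj_Y(\dot Y)}{\nabla^2 g(Y)[\dot Y] - \sbd{\nabla g(Y)Y\transpose\,}\dot Y} = \inner{\dot Y}{\nabla^2 g(Y)[\dot Y] - \sbd{\nabla g(Y)Y\transpose\,}\dot Y}.
\end{align*}
Thus $\Hess\,g(Y) \succeq 0$ as a symmetric operator on $\T_Y\Stdpm$ is exactly~\eqref{eq:secondordercondition}.

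Finally, to show a local optimizer $Y$ of~\eqref{eq:RP} is a second-order critical point, I would realize an arbitrary tangent direction by a smooth curve on the manifold, e.g.\ $c(t) = \Retr_Y(t\dot Y)$ from~\eqref{eq:retraction}, which satisfies $c(0) = Y$ and $c'(0) = \dot Y$. Local minimality of $g$ on $\Stdpm$ forces $t \mapsto (g\circ c)(t)$ to have a local minimum at $t = 0$, hence $\dd{t}(g\circ c)(0) = 0$ and $\frac{\mathrm{d}^2}{\mathrm{d}t^2}(g\circ c)(0) \geq 0$. The first derivative equals $\inner{\nabla g(Y)}{\dot Y} = \inner{\grad\,g(Y)}{\dot Y}$, and requiring this to vanish for all tangent $\dot Y$ (recall $\grad\,g(Y)$ is itself tangent) yields $\grad\,g(Y) = 0$, i.e.\ $Y$ is critical. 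Then, using the standard relation between the second derivative of a cost along a retraction curve and the Riemannian Hessian---the two differ by a term proportional to $\grad\,g(Y)$, which vanishes at a critical point (see~\citep[\S5]{AMS08})---the second derivative equals $\inner{\dot Y}{\Hess\,g(Y)[\dot Y]}$, which is therefore nonnegative for all tangent $\dot Y$. Alternatively one may simply quote the necessary optimality conditions for Riemannian optimization~\citep{AMS08}. The main (and only) obstacle is this last step: being careful that the discrepancy between the retraction-based second derivative and the intrinsic Hessian is a multiple of the gradient and hence drops out precisely because $Y$ is critical; all the rest is substitution into the formulas of Section~\ref{sec:geometryriemannian}.
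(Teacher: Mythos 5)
Your proof is correct and follows the same route as the paper's (very terse) argument: substitute the explicit formulas~\eqref{eq:Proj}, \eqref{eq:gradg}, \eqref{eq:Hessg} and use that $\Proj_Y$ is self-adjoint and fixes tangent vectors to read off~\eqref{eq:firstordercondition} and~\eqref{eq:secondordercondition}, then invoke the standard Riemannian necessary conditions for the ``local optimizer $\Rightarrow$ second-order critical'' part. You merely spell out in more detail (via a retraction curve) what the paper cites from~\citep{AMS08,yang2012optimality}.
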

\begin{proof}
This is a direct generalization of the classical necessary optimality conditions for unconstrained optimization~\citep[Prop.\,1.1.1]{bertsekas1995nonlinear}
to the Riemannian setting, as per the formalism in~\citep{AMS08,yang2012optimality}.
Use equations~\eqref{eq:Proj}, \eqref{eq:gradg} and~\eqref{eq:Hessg} and the fact that $\Proj_Y$ is self-adjoint to obtain equations~\eqref{eq:firstordercondition} and~\eqref{eq:secondordercondition}. %
\end{proof}
Lemmas~\ref{lem:kktpoint} and~\ref{lem:localopt} suggest the definition of an (as yet merely tentative) formula for the dual certificate $\hat S$, based on~\eqref{eq:firstordercondition}:
\begin{align}
	S & = S(X) = \nabla f(X) - \sbd{\nabla f(X) X}.
	\label{eq:S}
\end{align}
Indeed,
for any critical point $Y$ of~\eqref{eq:RP}, it holds (with $X = YY\transpose$) that $SY = 0$, so that $SX = 0$.
In that case, for $p=d$, $S$ can be advantageously interpreted as a graph Laplacian (up to a change of variable\footnote{For $Y$ a critical point of~\eqref{eq:RP} with $p=d$, we have $Y = \left(\begin{smallmatrix} Y_1\transpose & \cdots & Y_m\transpose \end{smallmatrix}\right)\transpose$ with orthogonal $Y_i$'s, and $SY = 0$. Write $\nabla f(YY\transpose) := -C$ for short. $SY = 0$ implies, after some algebra, that the $(CYY\transpose)_{ii}$'s are symmetric. This can be used to see that $\tilde S = \diag(Y_1, \ldots, Y_m)\,\transpose S(YY\transpose)\, \diag(Y_1, \ldots, Y_m)$ is a matrix with off-diagonal blocks equal to $-Y_i\transpose C_{ij}^{} Y_j^{}$ and diagonal blocks equal to $\sum_{j \neq i} Y_i\transpose C_{ij}^{} Y_j^{}$. Thus, $\tilde S$ is exactly the Laplacian of the graph with $m$ nodes and edge ``weights'' given by the matrices $Y_i\transpose C_{ij}^{} Y_j^{}$.}), as is often the case for dual certificates of estimation problems on graphs~\citep{bandeira2014tightness,demanet2013convex}. We now show that $S(X)$ is indeed the unique possible dual certificate for any feasible $X$.
(This is similar to, but different from, Theorem 4 in~\citep{journee2010low}; a complex version appears in~\citep{bandeira2014tightness} for $d=1$.)
\begin{theorem}[$S$ is the right certificate]
\label{thm:KKTS}
$X \in \calC$ is a KKT point for~\eqref{eq:P} if and only if $S$~\eqref{eq:S} is positive semidefinite. If so, $\hat S = S$ is the unique dual certificate for Lemma~\ref{lem:kktpoint}.
\end{theorem}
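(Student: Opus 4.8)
The plan is to exploit the fact that $S$ in~\eqref{eq:S} already has the \emph{shape} of a candidate dual pair. Writing $C = \nabla f(X)$ (symmetric, by the standing assumption on $f$), we have $S = C + \Lambda$ with $\Lambda \triangleq -\sbd{CX}$ symmetric and block-diagonal, and $S$ itself symmetric. So the relation ``$\hat S = \nabla f(X) + \hat\Lambda$'' of Lemma~\ref{lem:kktpoint} holds automatically for the choice $(\hat S, \hat\Lambda) = (S, \Lambda)$, and proving the theorem reduces to two things: (i) if $S\succeq 0$ then the complementarity $SX = 0$ holds automatically, giving a valid certificate; and (ii) any certificate $(\hat S, \hat\Lambda)$ whatsoever is forced to equal $(S, \Lambda)$, which simultaneously yields the ``only if'' direction ($X$ KKT $\Rightarrow S = \hat S \succeq 0$) and uniqueness.

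For the ``if'' direction, assume $X\in\calC$ and $S\succeq 0$. The key computation is that $\trace(SX) = 0$ holds for \emph{every} $X\in\calC$: since $X_{ii} = I_d$, one has $\trace(\sbd{CX}\,X) = \sum_i \trace\big(\symm{(CX)_{ii}}\big) = \sum_i \trace\big((CX)_{ii}\big) = \trace(CX)$, so $\trace(SX) = \trace(CX) - \trace(\sbd{CX}\,X) = 0$. With $S\succeq 0$ and $X\succeq 0$ this forces $SX = 0$ (standard: $\trace(SX) = \smallfrobnorm{X^{1/2}S^{1/2}}^2 = 0$, hence $X^{1/2}S^{1/2} = 0$, hence $XS = SX = 0$ using symmetry of $S$ and $X$). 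Thus $(\hat S, \hat\Lambda) = (S, -\sbd{\nabla f(X)X})$ satisfies all three KKT relations, so $X$ is a KKT point.

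For the converse and uniqueness, suppose $X$ is a KKT point with \emph{some} certificate $(\hat S, \hat\Lambda)$. From $\hat S X = 0$ and $\hat S = C + \hat\Lambda$ we get $CX = -\hat\Lambda X$; taking the $i$th diagonal block and using that $\hat\Lambda$ is block-diagonal with $X_{ii} = I_d$ gives $(CX)_{ii} = -\hat\Lambda_{ii}$. Since $\hat\Lambda_{ii}$ is symmetric, $(CX)_{ii}$ must be symmetric too, hence $(CX)_{ii} = \symm{(CX)_{ii}}$ and therefore $\hat\Lambda_{ii} = -\sbd{CX}_{ii}$ for all $i$, i.e.\ $\hat\Lambda = -\sbd{\nabla f(X)X}$ and $\hat S = \nabla f(X) - \sbd{\nabla f(X)X} = S$. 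In particular $S = \hat S \succeq 0$, and both $\hat S$ and $\hat\Lambda = \hat S - \nabla f(X)$ are uniquely determined, completing the proof.

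I do not expect a genuine obstacle here: the statement is really driven by two structural observations — that the block-identity constraints $X_{ii} = I_d$ make $\trace(SX)$ vanish identically on $\calC$, and that complementarity $\hat S X = 0$ for symmetric PSD $\hat S, X$ upgrades to the product being zero, which then pins down the diagonal blocks of the multiplier. The only point requiring mild care is bookkeeping with symmetric parts through $\sbd{\cdot}$, namely noticing that the (a priori possibly non-symmetric) blocks $(\nabla f(X)X)_{ii}$ are in fact symmetric at any KKT point, so that $\symm{\cdot}$ and $\sbd{\cdot}$ can be inserted for free.
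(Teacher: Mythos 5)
Your proposal is correct and follows essentially the same route as the paper: you establish $\trace(SX)=0$ on $\calC$ (the paper asserts this "by construction"; you spell it out), use it together with $S,X\succeq 0$ to get $SX=0$ for the "if" direction, and then pin down $\hat\Lambda = -\sbd{\nabla f(X)X}$ from complementarity plus $X_{ii}=I_d$ for the converse and uniqueness. The only cosmetic difference is that you make the "$(\nabla f(X)X)_{ii}$ is automatically symmetric at a KKT point" observation explicit, whereas the paper folds it into the line $\sbd{\nabla f(X)X} = -\sbd{\hat\Lambda X} = -\hat\Lambda$.
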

\begin{proof}
We show the if and only if parts of the first statement separately.
\begin{itemize}
\item[$\mathbf{\Leftarrow}$:] By construction, $\trace(SX) = 0$. Since both $S$ and $X$ are positive semidefinite, this implies $SX = 0$. Apply Lemma~\ref{lem:kktpoint} with $\hat S = S$ and $\hat \Lambda = - \sbd{\nabla f(X) X}$. %
\item[$\mathbf{\Rightarrow}$:] Since $X$ is a KKT point for~\eqref{eq:P}, there exist $\hat S \succeq 0$ and $\hat \Lambda$ symmetric, block-diagonal satisfying the conditions in Lemma~\ref{lem:kktpoint}. In particular, $\hat S X = 0$ and $\nabla f(X) = \hat S - \hat \Lambda$. Thus, $\nabla f(X) X = -\hat \Lambda X$ and $\sbd{\nabla f(X) X} = -\sbd{\hat \Lambda X} = -\hat \Lambda$. Here, we used both the fact that $\hat \Lambda$ is symmetric, block-diagonal and the fact that
$X_{ii} = I_d$.
Consequently, $S = \nabla f(X) - \sbd{\nabla f(X) X} = \hat S - \hat \Lambda + \hat \Lambda = \hat S \succeq 0$.
\end{itemize}
The last point also shows there exists only one pair $(\hat S, \hat \Lambda)$ certifying $X$ is a KKT point.
\end{proof}
Notice how the Riemannian structure underlying problem~\eqref{eq:P} made it possible to simply read off an analytical expression for a dual certificate from the necessary optimality conditions of~\eqref{eq:RP}. This smooth geometry also leads to uniqueness of the dual certificate (this is connected to nondegeneracy~\citep[Thm.\,7]{alizadeh1997complementarity}). Theorem~\ref{thm:KKTS} makes for an unusually comfortable situation
and will be helpful throughout the paper.
%

%
%
%
%
%	%
%
%
%
%
%
%	%
%%
%
For convex $f$, we can make the following statement regarding uniqueness of the solution.
\begin{theorem}
Assume $f$ is convex. If $X \in \calC$ is an extreme point for~\eqref{eq:P} (which is true in particular if $\rank(X) = d$), and $S \succeq 0$, and $\rank(X) + \rank(S) = n$ (strict complementarity),
then $X$ is the unique global optimizer of~\eqref{eq:P}.
\label{thm:fconvexglobaloptunique}
\end{theorem}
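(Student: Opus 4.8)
The plan is to prove uniqueness by combining the KKT structure (via the certificate $S$) with the strict complementarity hypothesis, exploiting convexity. First I would note that since $S \succeq 0$ and $SX = 0$ (the latter holds because $\trace(SX)=0$ by construction of $S$ and both are PSD), Theorem~\ref{thm:KKTS} gives that $X$ is a KKT point, hence, by convexity of $f$ (Lemma~\ref{lem:kktpoint}), $X$ is a global optimizer of~\eqref{eq:P}. So the content is purely \emph{uniqueness}. Let $X'$ be any other global optimizer. The key observation is that $S$ is still a valid dual certificate for $X'$: because $f$ is convex, for the optimizer $X'$ there exist dual variables $(\hat S', \hat\Lambda')$, but more directly, I would argue that $\langle S, X'\rangle = \langle \nabla f(X) + \hat\Lambda, X'\rangle$ where $\hat\Lambda = -\sbd{\nabla f(X)X}$ is block-diagonal, and using $X'_{ii}=I_d=X_{ii}$ one gets $\langle \hat\Lambda, X'\rangle = \langle\hat\Lambda, X\rangle$; combined with the fact that $X$ and $X'$ have the same optimal value and convexity forces $\langle \nabla f(X), X' - X\rangle \ge 0$ while optimality at $X$ forces $\langle \nabla f(X), X'-X\rangle \ge 0$ and at $X'$ (by convexity, $\langle\nabla f(X'),X-X'\rangle \le f(X)-f(X') = 0$), one deduces $\langle \nabla f(X), X'-X\rangle = 0$ and hence $\langle S, X'\rangle = \langle S, X\rangle = 0$. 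Since $S\succeq 0$ and $X'\succeq 0$, this yields $SX' = 0$.

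Next I would use strict complementarity. From $\rank(X) + \rank(S) = n$ and $SX = 0$, the column space of $X$ is exactly $\ker S$ (it is contained in $\ker S$ because $SX=0$, and the dimensions match). Likewise $SX'=0$ gives $\col(X') \subseteq \ker S = \col(X)$. Therefore $X' = Y B Y\transpose$ for some $B \succeq 0$, where $X = YY\transpose$ with $Y$ of full column rank $p_0 = \rank(X)$. Now the constraint $X'_{ii} = I_d = X_{ii}$ for all $i$ means $Y_i B Y_i\transpose = Y_i Y_i\transpose$, i.e. $Y_i (B - I_{p_0}) Y_i\transpose = 0$ for each $i$; in the notation of~\eqref{eq:calL}, $B - I_{p_0} \in \ker \calL_X$. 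But $X$ is an extreme point, so $\dim \calF_X = \dim \ker\calL_X = 0$ by~\eqref{eq:dimF} and~\eqref{eq:calF}; hence $B = I_{p_0}$ and $X' = YY\transpose = X$. This gives uniqueness.

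The main obstacle I anticipate is the first step — cleanly establishing that $S$ certifies \emph{every} optimizer $X'$, i.e. that $SX' = 0$. The argument above hinges on the chain of inequalities forcing $\langle\nabla f(X), X'-X\rangle = 0$, which uses convexity of $f$ in both directions (the gradient inequality at $X$ and at $X'$) together with $f(X) = f(X')$; one must be careful that $f$ is defined on all symmetric matrices and the convexity inequalities apply along the segment $[X,X']$, which lies in $\calC$. An alternative, perhaps cleaner route is to invoke the fact (complementary slackness in convex SDP) that the optimal dual solution is feasible against all primal optimizers: since the dual certificate $(\hat S,\hat\Lambda)$ is feasible and achieves the optimal value, and by Theorem~\ref{thm:KKTS} it is \emph{the} certificate, any primal optimal $X'$ must satisfy $\hat S X' = 0$ by the usual strong-duality complementary slackness; then $\hat S = S$ closes it. I would present whichever is shorter, but would double-check the duality bookkeeping since the constraints here are matrix equalities $X_{ii}=I_d$ rather than scalar ones. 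The remaining steps (the rank/column-space argument and the extreme-point argument via $\calL_X$) are routine given the machinery already developed in Section~\ref{sec:geometryconvex}.
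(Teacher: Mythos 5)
Your proposal is correct and follows essentially the same route as the paper: show $\langle S, X'\rangle = 0$ for any other optimizer $X'$, deduce $SX' = 0$, use strict complementarity to conclude $\col(X') \subseteq \ker S = \col(X)$, and invoke extremality of $X$ (via $\ker\calL_X = 0$) to force $X' = X$. The only wrinkle is a sign slip in your inequality chain (the gradient inequality of convexity at $X$, together with $f(X)=f(X')$, gives $\langle\nabla f(X), X'-X\rangle \le 0$, not $\ge 0$); the paper sidesteps this by observing that $f$ is constant on the optimal segment $[X,X']$, so its directional derivative at $X$ along $X'-X$ vanishes outright.
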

\begin{proof}

From Theorem~\ref{thm:KKTS}, it is clear that $X$ is a global optimizer. We prove by contradiction that it is unique. Let $X' \neq X$ be another global optimizer. Since \eqref{eq:P} is a convex problem in this setting, $f$ is constant over the whole (optimal) segment $t \mapsto X + t(X'-X)$ for $t \in [0, 1]$. Hence, the directional derivative of $f$ at $X$ along $\dot X = X'-X$ is zero: 
\begin{align*}
 	0 	& = \innersmall{\nabla f(X)}{\dot X} \\
 		& = \innersmall{S + \sbd{\nabla f(X) X}}{\dot X} & & (\textrm{definition of $S$~\eqref{eq:S}})\\
 		& = \innersmall{S}{\dot X} & & (\textrm{diagonal blocks of $\dot X$ are zero})\\
 		& = \innersmall{S}{X'} & & (\textrm{$SX = 0$}).
\end{align*}
Since both $S$ and $X'$ are positive semidefinite, it ensues that $SX' = 0$. (Note that for linear $f$, this shows $S$ is the dual certificate for all global optimizers of~\eqref{eq:P}, not only for $X$.) Hence, $S \dot X = 0$.

Let $p = \rank(X)$ and $Y \in \Rnp$ be a full-rank matrix such that $X = YY\transpose$. Strict complementarity and $SX = 0$ imply the columns of $Y$ form a basis for the kernel of $S$. Hence, $S\dot X = 0$ ensures that $\dot X = YAY\transpose$ for some $A \in \Spp$, $A \neq 0$, such that $\sbdsmall{YAY\transpose} = 0$. In other words, $X' = X + \dot X = Y(I_p + A)Y\transpose$ is in the face $\calF_X$. This is a contradiction, since $\calF_X = \{X\}$.
%
% %
%
\end{proof}
\begin{remark}
	In general, the condition that $X$ be an extreme point in the previous theorem cannot be removed. Indeed, if $f(X) \equiv 0$, then all admissible $X$'s are globally optimal and $S(X) \equiv 0 \succeq 0$. In particular, $X = I_n$ satisfies strict complementarity, but if $m > 1$, it is not extreme, and it is not a unique global optimizer. Likewise, any rank-$d$ admissible $X$ is extreme and globally optimal, but does not satisfy strict complementarity. Similar examples can be built with nonzero linear costs $f(X) = \inner{C}{X}$, where the sparsity pattern of $C$ corresponds to a disconnected graph.
	
	Conversely, it is not true in general that uniqueness of the global optimizer
	implies extremeness or strict complementarity. Simply consider $f(X) = \sqfrobnormsmall{X-X_0}$ with $X_0\in\calC$: the global optimizer $X = X_0$ is unique, and $\nabla f(X_0) = 0$, so that $S(X_0) = 0$. For $m$ large enough, $X_0$ can be chosen to be both not extreme and rank deficient. For an illustration of this in semidefinite programs, see the nice example after Theorem 10 in~\citep{alizadeh1997complementarity}.
\end{remark}

Continuing with convex cost functions, KKT points of~\eqref{eq:P} coincide with global optimizers. This and the fact that~\eqref{eq:RP} is a relaxation of~\eqref{eq:P} lead to the following summary regarding global optimality conditions. For~\eqref{eq:RP}, these sufficient conditions are conclusive whenever the relaxation is tight.
\begin{corollary}[Global optimality conditions]
Assume $f$ is convex and let $Y\in\Stdpm$; $X = YY\transpose$ is globally optimal for~\eqref{eq:P} if and only if $S\succeq 0$. If so, then $Y$ is a global optimizer for the nonconvex problem~\eqref{eq:RP}.
Furthermore, if $X$ is extreme (in particular, if $\rank(X) = d$) and if $\rank(X) + \rank(S) = n$, then $X$ is the unique global optimizer of~\eqref{eq:P} and $Y$ is the unique global optimizer of~\eqref{eq:RP}, up to orthogonal action $YQ$, $Q \in \Op$.
\end{corollary}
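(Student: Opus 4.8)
The plan is to obtain the corollary by packaging together Lemma~\ref{lem:kktpoint}, Theorem~\ref{thm:KKTS} and Theorem~\ref{thm:fconvexglobaloptunique}, together with the elementary fact (recalled right after~\eqref{eq:P}) that the optimal value of~\eqref{eq:P} lower-bounds that of~\eqref{eq:RP}.

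For the first equivalence: since $f$ is convex, Lemma~\ref{lem:kktpoint} gives that $X$ is globally optimal for~\eqref{eq:P} if and only if it is a KKT point, while Theorem~\ref{thm:KKTS} gives that $X\in\calC$ is a KKT point if and only if $S\succeq 0$; chaining the two yields the claim. For the transfer to~\eqref{eq:RP}: $Y$ is feasible for~\eqref{eq:RP} by hypothesis and attains $g(Y)=f(YY\transpose)=f(X)$, which by assumption equals the optimal value of~\eqref{eq:P}; since the latter lower-bounds the optimal value of~\eqref{eq:RP} and $Y$ is feasible, $Y$ is a global optimizer of~\eqref{eq:RP}. In particular the relaxation is tight at $X$, a fact I reuse below.

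For the \emph{furthermore} part, I would first check that all hypotheses of Theorem~\ref{thm:fconvexglobaloptunique} are in force ($f$ convex; $X$ extreme; $S\succeq 0$ by the first part; strict complementarity $\rank(X)+\rank(S)=n$) --- note that this theorem is a statement about $X$ alone, so it applies even if our $Y$ is rank-deficient --- whence $X$ is the \emph{unique} global optimizer of~\eqref{eq:P}. It then remains to upgrade uniqueness to~\eqref{eq:RP}, up to $\Op$-action. Let $Y'\in\Stdpm$ be any global optimizer of~\eqref{eq:RP}. By tightness, $f(Y'Y'\transpose)$ equals the optimal value of~\eqref{eq:P}, so $Y'Y'\transpose$ is globally optimal for~\eqref{eq:P}, hence $Y'Y'\transpose=X=YY\transpose$. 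Finally I invoke the standard fact that two factors in $\Rnp$ of one positive semidefinite matrix of rank $r\le p$ differ by an orthogonal factor: writing $X=UDU\transpose$ with $U\in\Rnr$ having orthonormal columns and $D$ positive diagonal, one gets $Y=UD^{1/2}W$ and $Y'=UD^{1/2}W'$ for $r\times p$ matrices $W,W'$ with orthonormal rows; completing $W,W'$ to $p\times p$ orthogonal matrices $\tilde W,\tilde W'$ and setting $Q=\tilde W\transpose\tilde W'\in\Op$ gives $WQ=W'$, hence $Y'=YQ$. Conversely $YQ\in\Stdpm$ for every $Q\in\Op$ since $(YQQ\transpose Y\transpose)_{ii}=(YY\transpose)_{ii}=I_d$, and $f(YQQ\transpose Y\transpose)=f(X)$, so the set of global optimizers of~\eqref{eq:RP} is exactly the orbit $\{YQ:Q\in\Op\}$.

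I do not foresee a genuine obstacle: the statement is essentially a repackaging of the two theorems plus the relaxation inequality. The only points needing a little care are the last step --- the factorization claim $YY\transpose=Y'Y'\transpose\Rightarrow Y'=YQ$, which uses $p\ge\rank(X)$ (automatic since $Y\in\Rnp$) so that the row spaces of $W$ and $W'$ can be rotated onto one another inside $\Rp$ --- and recording that $YQ$ stays feasible for~\eqref{eq:RP}, so that the orbit is genuinely the full optimizer set.
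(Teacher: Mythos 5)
Your proof is correct and follows essentially the approach the paper intends: the paper states this corollary without an explicit proof, presenting it as a direct repackaging of Lemma~\ref{lem:kktpoint}, Theorem~\ref{thm:KKTS}, Theorem~\ref{thm:fconvexglobaloptunique}, and the fact that the optimal value of~\eqref{eq:P} lower-bounds that of~\eqref{eq:RP}. Your spelled-out factorization argument showing $YY\transpose = Y'Y'\transpose \Rightarrow Y' = YQ$ for some $Q\in\Op$ (and that the $\Op$-orbit stays feasible) is exactly the missing detail needed to justify the ``up to orthogonal action'' clause, and you handle it correctly including the case where $Y$ is rank-deficient.
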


Returning to the general case of $f$ not necessarily convex, we establish links between second-order critical points of~\eqref{eq:RP} and KKT points of~\eqref{eq:P}.
In doing so, it is useful to reformulate the second-order optimality condition~\eqref{eq:secondordercondition} on~\eqref{eq:RP} in terms of $S$~\eqref{eq:S} and $f$. Let $Y\in\Stdpm$ and $X = YY\transpose$. Then, $Y$ is a second-order critical point for~\eqref{eq:RP} if and only if it is critical and, for all $\dot Y \in \T_Y\Stdpm$, with $\dot X = \dot Y Y\transpose + Y\dot Y\transpose$, it holds that (using~\eqref{eq:nabla2gnabla2f}):
\begin{align}
	\inner{\dot Y}{\Hess\,g(Y)[\dot Y]} & = \inner{\dot Y}{\nabla^2 g(Y)[\dot Y] - \sbd{\nabla g(Y)Y\transpose} \dot Y} \nonumber\\
	& = 2\innerbig{\dot Y}{\nabla^2 f(X)[\dot X]Y + \nabla f(X)\dot Y - \sbd{\nabla f(X)X}\dot Y} \nonumber\\
	& = 2\innerbig{\dot Y}{\nabla^2 f(X)[\dot X]Y + S\dot Y} \nonumber\\
	& = \innerbig{\dot X}{\nabla^2 f(X)[\dot X]} + 2\innerbig{\dot Y}{S \dot Y} \geq 0.
	\label{eq:secondorderconditionbis}
\end{align}
Since~\eqref{eq:RP} is essentially equivalent to~\eqref{eq:P} with the additional constraint $\rank(X) \leq p$, assuming $Y$ is optimal for~\eqref{eq:RP}, we expect $X$ to be a KKT point at least if either of the following holds: (1) if $Y$ is rank deficient, since then the extra constraint is not active, meaning it is ``as if'' we were solving~\eqref{eq:P}; or (2) if $p = n$, since then the extra constraint is vacuous. The two following theorems show this still holds for second-order critical points $Y$.
\begin{theorem} %
If $Y$ is a rank-deficient, second-order critical point for~\eqref{eq:RP}, then $X = YY\transpose$ is a KKT point for~\eqref{eq:P}.%
\label{thm:rankdeficientY}
\end{theorem}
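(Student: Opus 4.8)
The plan is to exploit the key identity \eqref{eq:secondorderconditionbis}, which says that second-order criticality of $Y$ is equivalent to $\innerbig{\dot X}{\nabla^2 f(X)[\dot X]} + 2\innerbig{\dot Y}{S\dot Y} \geq 0$ for all $\dot Y \in \T_Y\Stdpm$. Since $Y$ is already critical, we know $SY = 0$ (as noted right after \eqref{eq:S}), hence $SX = 0$. By Theorem~\ref{thm:KKTS} it then suffices to prove $S \succeq 0$. So the whole argument reduces to: \emph{show that rank-deficiency of $Y$ forces $S \succeq 0$.}

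The mechanism is the following. Because $Y \in \Rnp$ has $\rank(Y) < p$, there exists a nonzero vector $z \in \Rp$ with $Yz = 0$. The idea is to feed into \eqref{eq:secondorderconditionbis} tangent directions of the special form $\dot Y = wz\transpose$, where $w \in \Rn$ is arbitrary. First I would check that such $\dot Y$ is indeed tangent: $\dot Y Y\transpose + Y \dot Y\transpose = wz\transpose Y\transpose + Yzw\transpose = 0$ because $Yz = 0$, so in particular $\sbd{\dot Y Y\transpose + Y\dot Y\transpose} = 0$, confirming $\dot Y \in \T_Y\Stdpm$ via \eqref{eq:tangentspace}. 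Moreover $\dot X = \dot Y Y\transpose + Y\dot Y\transpose = 0$ for the same reason, so the first-order term $\innerbig{\dot X}{\nabla^2 f(X)[\dot X]}$ vanishes identically. Plugging this $\dot Y$ into \eqref{eq:secondorderconditionbis} leaves
\begin{align*}
0 \leq 2\innerbig{\dot Y}{S\dot Y} = 2\,\Trace\big(z w\transpose S w z\transpose\big) = 2\,(z\transpose z)\, w\transpose S w = 2\sqnorm{z}\, w\transpose S w.
\end{align*}
Since $z \neq 0$ and $w \in \Rn$ is arbitrary, this gives $w\transpose S w \geq 0$ for all $w$, i.e.\ $S \succeq 0$. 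Combined with $SX = 0$, Theorem~\ref{thm:KKTS} immediately yields that $X = YY\transpose$ is a KKT point for \eqref{eq:P}.

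I do not expect a serious obstacle here; the argument is short once one has the reformulation \eqref{eq:secondorderconditionbis} in hand. The only point demanding mild care is the verification that $\dot Y = wz\transpose$ lies in the tangent space and that it kills the $\dot X$-dependent (potentially indefinite) Hessian-of-$f$ term — both follow from $Yz = 0$. One should also note that rank-deficiency is used in an essential way: without a nonzero $z$ in $\ker Y$ one cannot isolate the $\innerbig{\dot Y}{S\dot Y}$ term from the possibly-negative curvature term coming from $\nabla^2 f$, which is exactly why the full-rank case requires the separate treatment (increasing $p$) discussed after the theorem. This mirrors the corresponding argument in~\citep{journee2010low}.
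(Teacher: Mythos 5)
Your proof is correct and takes essentially the same route as the paper: pick a nonzero $z$ in the kernel of $Y$, feed $\dot Y = wz\transpose$ (paper: $xz\transpose$) into~\eqref{eq:secondorderconditionbis}, observe that $\dot X = 0$ so only $2\sqnorm{z}\,w\transpose S w \geq 0$ survives, conclude $S \succeq 0$, and invoke Theorem~\ref{thm:KKTS}. The only cosmetic difference is that you flag $SX = 0$ as needing to be checked, whereas Theorem~\ref{thm:KKTS} already packages that fact (in its proof, $\trace(SX) = 0$ holds by construction for any $X \in \calC$), so that step is harmlessly redundant.
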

\begin{proof}
By Theorem~\ref{thm:KKTS}, we must show that $S$~\eqref{eq:S} is positive semidefinite.
Since $Y$ is rank deficient, there exists $z \in \Rp$ such that $z \neq 0$ and $Yz = 0$. Furthermore, for all $x\in\Rn$, the matrix $\dot Y = xz\transpose$ is such that $Y\dot Y\transpose = 0$. In particular, $\dot Y$ is a tangent vector at $Y$~\eqref{eq:tangentspace}. Since $Y$ is second-order critical, inequality~\eqref{eq:secondorderconditionbis} holds, and here simplifies to:
%the statements of Lemma~\ref{lem:localopt} hold. Consider equation~\eqref{eq:secondordercondition} in particular. By the chain rule, %
%
%
%
%
%
%
%
\begin{align*}
	\innerbig{\dot Y}{S \dot Y} = \innerbig{xz\transpose}{Sxz\transpose\,} = \|z\|^2 \cdot x\transpose S x \geq 0.
\end{align*}
This holds for all $x\in\Rn$. 
Thus, $S$ is positive semidefinite.
%All conditions of Theorem~\ref{thm:KKTS} are fulfilled. %
\end{proof}
\begin{theorem} %
If $Y$ is square ($p=n$) and it is a second-order critical point for~$(\textrm{\emph{RP}}_n)$, then $X = YY\transpose$ is a KKT point for~\eqref{eq:P}. If $Y$ is full-rank, it needs only be first-order critical for $X$ to be a KKT point.
\label{thm:squareY}
\end{theorem}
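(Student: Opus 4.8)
The plan is to deduce the theorem from Theorem~\ref{thm:rankdeficientY} together with an elementary observation about invertible matrices, via a case split on the rank of $Y$. Throughout, write $S = S(X)$ as in~\eqref{eq:S}; since $X = YY\transpose \in \calC$ whenever $Y \in \Stdpm$, Theorem~\ref{thm:KKTS} tells us that in every case it is enough to prove $S \succeq 0$.

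First suppose $Y$ is rank deficient. Then, irrespective of the value of $p$, Theorem~\ref{thm:rankdeficientY} applies as is: a rank-deficient second-order critical point of~\eqref{eq:RP} (here with $p = n$) already reveals a KKT point $X = YY\transpose$, that is, $S \succeq 0$. No new argument is needed in this branch.

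Next suppose $Y$ is full rank. Because $p = n$, the matrix $Y \in \Rnn$ is then square and of rank $n$, hence invertible, and so $X = YY\transpose$ is positive definite. First-order criticality from Lemma~\ref{lem:localopt}, namely $\big(\nabla f(X) - \sbd{\nabla f(X)X}\big)Y = SY = 0$ by~\eqref{eq:S}, therefore forces $S = 0$ (equivalently: $SX = 0$ with $X \succ 0$ gives $S = 0$). In particular $S \succeq 0$, so $X$ is a KKT point by Theorem~\ref{thm:KKTS}. This branch used only the first-order condition, which is precisely the content of the theorem's second sentence; since ``rank deficient'' and ``full rank'' are exhaustive alternatives for a square $Y$, combining the two branches yields the first sentence for any second-order critical $Y$.

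I do not expect a genuine obstacle here: the substance sits in Theorem~\ref{thm:rankdeficientY} and in the explicit certificate $S$, both already established, and the present statement is essentially their repackaging. The only mild subtlety is to remember that for a square matrix ``full rank'' means ``invertible'', which is exactly what lets the first-order condition collapse $S$ to zero; for non-square $Y$ with $p < n$ this collapse fails, which is why the full-rank shortcut is special to $p = n$.
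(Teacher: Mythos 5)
Your proof is correct and follows exactly the same route as the paper's: a case split on the rank of $Y$, invoking Theorem~\ref{thm:rankdeficientY} in the rank-deficient branch, and using invertibility of $Y$ together with $SY=0$ from~\eqref{eq:firstordercondition} to force $S=0$ in the full-rank branch, then concluding via Theorem~\ref{thm:KKTS}.
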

\begin{proof}
If $Y$ is rank deficient, then the result follows from Theorem~\ref{thm:rankdeficientY}. If $Y$ is full rank, then it is invertible. Since $Y$ is also a critical point, first-order optimality conditions~\eqref{eq:firstordercondition} imply $SY = 0$, hence $S = 0$.
This completes the proof, as per Theorem~\ref{thm:KKTS}.
\end{proof}

In particular, if $f$ is convex and~\eqref{eq:P} has a unique solution of rank $r$, then all second-order critical points of~\eqref{eq:RP} have rank either $r$ or $p$. %
Thus, if $p$ is larger than the rank of a solution of~\eqref{eq:P}, we may hope that minimizing~\eqref{eq:RP} until we reach a second-order critical point will result in a rank-deficient $Y$, revealing a KKT point. Unfortunately, in general, we cannot guarantee rank deficiency beforehand. For those cases, the following theorem and corollary provide a means of escaping unsatisfactory
critical points.

\begin{theorem}[Escape direction (rank-deficient)]\label{thm:escaperankdeficient}
	Let $Y$ be a rank-deficient critical point for~\eqref{eq:RP} such that $X = YY\transpose$ is not a KKT point of~\eqref{eq:P}. Then, for all nonzero vectors $z\in\Rp$ and $u\in\Rn$ such that $Yz = 0$ and $u\transpose S u < 0$~\eqref{eq:S}, $\dot Y = u z\transpose \in \T_Y\Stdpm$ is a descent direction for $g$ from $Y$.
\end{theorem}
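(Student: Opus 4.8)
The plan is to exhibit $\dot Y = uz\transpose$ as an explicit tangent direction along which the cost $g$ strictly decreases, by showing that $g$ has a negative \emph{second-order} variation at $Y$ in this direction while the first-order variation vanishes (because $Y$ is critical). First I would check that $\dot Y = uz\transpose$ is indeed tangent at $Y$: since $Yz = 0$, we have $Y\dot Y\transpose = Y z u\transpose = 0$, hence $\sbd{\dot Y Y\transpose + Y\dot Y\transpose} = 0$, so $\dot Y \in \T_Y\Stdpm$ by~\eqref{eq:tangentspace}. This is the same computation used in the proof of Theorem~\ref{thm:rankdeficientY}.

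Next, because $Y$ is a critical point, $\grad\, g(Y) = 0$, so $\inner{\grad\, g(Y)}{\dot Y} = 0$; the first-order term in the Taylor expansion of $g$ along any curve through $Y$ with velocity $\dot Y$ vanishes. It then remains to evaluate the second-order term, which is governed by the Riemannian Hessian quadratic form. Using the reformulation~\eqref{eq:secondorderconditionbis}, with $\dot X = \dot Y Y\transpose + Y \dot Y\transpose = 0$ (again since $Y z = 0$), the $\nabla^2 f$ contribution drops out entirely, and we are left with
\begin{align*}
	\inner{\dot Y}{\Hess\, g(Y)[\dot Y]} = 2\inner{\dot Y}{S\dot Y} = 2\,\trace\!\left( z u\transpose S u z\transpose \right) = 2\,\|z\|^2 \cdot u\transpose S u < 0,
\end{align*}
by the hypothesis $u\transpose S u < 0$. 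So the Hessian is negative along $\dot Y$.

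To conclude that $\dot Y$ is a genuine descent direction — not merely a direction of negative curvature — I would invoke a retraction $R_Y$ (e.g.\ the one in~\eqref{eq:retraction}, which is second order) and consider $h(t) = g(R_Y(t\dot Y))$. Since the retraction agrees with the manifold to second order, $h'(0) = \inner{\grad\, g(Y)}{\dot Y} = 0$ and $h''(0) = \inner{\dot Y}{\Hess\, g(Y)[\dot Y]} < 0$, so a second-order Taylor expansion gives $h(t) < h(0) = g(Y)$ for all sufficiently small $t > 0$; thus $g(R_Y(t\dot Y)) < g(Y)$, which is exactly the assertion that $\dot Y$ is a descent direction from $Y$. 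I expect the only mild subtlety to be making precise what ``descent direction'' means on a manifold and quoting the right fact about second-order retractions (so that $h''(0)$ really is the Hessian quadratic form and no extra first-order correction term appears); the algebra of reducing $\inner{\dot Y}{\Hess\, g(Y)[\dot Y]}$ to $2\|z\|^2 u\transpose S u$ via $\dot X = 0$ is routine given~\eqref{eq:secondorderconditionbis}. The hypothesis that $X$ is not a KKT point is what guarantees, via Theorem~\ref{thm:KKTS}, that $S \not\succeq 0$, so that such a vector $u$ exists in the first place; but since the statement takes $u$ as given, this only needs to be mentioned in passing.
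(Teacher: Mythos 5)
Your proof is correct and follows essentially the same route as the paper's: check tangency via $Y\dot Y\transpose = 0$, observe $\dot X = 0$ so the $\nabla^2 f$ term in~\eqref{eq:secondorderconditionbis} drops out leaving $2\|z\|^2\,u\transpose Su < 0$, and conclude via a second-order Taylor expansion of $g$ along the second-order retraction. The paper additionally notes (in a footnote) that the residue in the expansion is actually $\mathcal{O}(t^4)$ rather than $\mathcal{O}(t^3)$ due to a symmetry of the curve, which is a refinement useful for a line search but not needed for the descent-direction claim itself.
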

\begin{proof}
By Theorem~\ref{thm:KKTS}, $u$ exists because $X$ is not a KKT point. Since $Y \dot Y\transpose = 0$, $\dot Y$ is indeed tangent at $Y$~\eqref{eq:tangentspace}. From~\eqref{eq:secondorderconditionbis}, it follows that
\begin{align*}
\innerbig{\dot Y}{\Hess\,g(Y)[\dot Y]} = 2\innerbig{\dot Y}{S \dot Y} = 2\|z\|^2 \cdot u\transpose S u < 0.
\end{align*}
As a result, this truncated Taylor expansion holds (we use the fact that the retraction~\eqref{eq:retraction} is second-order and assume $\|z\| = 1$):%
\footnote{
	We remark that the residue is of order 4 rather than 3, since $\phi$ is an even function of $t$. Indeed, let $J = \diag(1, \ldots, 1, -1)\in\Op$ and consider the SVD of $Y = U\Sigma V\transpose$, such that the last column of $V$ is $z/\|z\|$. Then, $Y - t\dot Y = (YV - t\dot YV)V\transpose = (Y + t\dot Y)VJV\transpose$, because the last column of $YV$ is zero, and $\dot YV$'s only nonzero column is the last one. Hence, $\Retr_{Y}(-t\dot Y) = \Retr_{Y}(t\dot Y)VJV\transpose$. These yield the same matrix $X$, thus the same value of $g$.  If the residue is bounded by $Lt^4$, then $t = \sqrt{-u\transpose Su / 2L}$ ensures $\phi(t) \leq \phi(0) - (u\transpose Su)^2/4L$. In particular, for $f(X) = \inner{C}{X}$, the fourth-order term follows and could be used to estimate $L$, to be used in a line-search for the escape:
	\begin{align*}
	&\frac{1}{4}t^4 \cdot \left[ \innersmall{C}{AXA} + u\transpose D\big( 3\sbd{CX} - 4C \big)u \right] + \mathcal{O}(t^6), \\
	&\textrm{with } A  = \sbdsmall{uu\transpose} \textrm{ and } D = \diag(\|u_1\|^2, \ldots, \|u_m\|^2) \otimes I_d.
	\end{align*}
}
\begin{align*}
\phi(t) & := g(\Retr_{Y}(t\dot Y)) = g(Y) + (u\transpose S u) t^2 + \mathcal{O}(t^4).
\end{align*}
Since $u\transpose S u < 0$,
there exists $t_0 > 0$ such that $\phi(t) < \phi(0)$ for all $t$ in $]0, t_0[$.\qedhere
\end{proof}
\begin{corollary}[Escape direction (full-rank)]\label{cor:escapefullrank}
Let $Y$ be a full-rank critical point for~\eqref{eq:RP} such that $X = YY\transpose$ is not a KKT point of~\eqref{eq:P}. Let $Y_+ = \begin{pmatrix} Y & 0_{n\times (p_+ - p)}\end{pmatrix} \in \St(d, p_+)^m$. Then, (a) $g(Y_+) = g(Y)$; (b) $Y_+$ is a critical point for~$(\text{\emph{RP}}_{p_+})$; and (c) for all $u\in\Rn$ such that $u\transpose S u < 0$, $\dot Y = u e_{p+1}\transpose \in \T_{Y_+}\St(d, p_+)^m$ is a descent direction for $g$ from $Y_+$, where $e_{p+1} \in \reals^{p_+}$ is a zero vector except for its $(p+1)^\textrm{st}$ entry, equal to 1.
\end{corollary}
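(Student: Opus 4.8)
The plan is to reduce each claim of the corollary to a statement already established for the rank-deficient case (Theorem~\ref{thm:escaperankdeficient}), exploiting the fact that padding $Y$ with zero columns produces a manifestly rank-deficient point whose associated matrix $X = YY\transpose$ is unchanged.

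First I would verify claim~(a): by construction $Y_+ Y_+\transpose = Y Y\transpose = X$, since appending zero columns does not affect the product. Hence $g(Y_+) = f(Y_+ Y_+\transpose) = f(YY\transpose) = g(Y)$, and moreover every object defined purely through $X$ — in particular $\nabla f(X)$, $\sbd{\nabla f(X)X}$, and the dual matrix $S = S(X)$~\eqref{eq:S} — is identical whether computed from $Y$ or $Y_+$. Next, for claim~(b), I would check that $Y_+$ is feasible for $(\text{RP}_{p_+})$, i.e. $Y_+ \in \St(d,p_+)^m$: the $i$th slice of $Y_+$ is $\begin{pmatrix} Y_i & 0 \end{pmatrix}$, and $(Y_+)_i (Y_+)_i\transpose = Y_i^{}Y_i\transpose = I_d$, so indeed $\sbd{Y_+ Y_+\transpose} = I_n$. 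Then I would invoke the first-order criticality condition~\eqref{eq:firstordercondition}: $Y$ critical means $S Y = 0$, i.e. $S Y_i = 0$ blockwise; since the columns of $Y_+$ are the columns of $Y$ together with zero columns, $S Y_+ = \begin{pmatrix} SY & 0 \end{pmatrix} = 0$, so $Y_+$ satisfies~\eqref{eq:firstordercondition} and is a critical point for $(\text{RP}_{p_+})$.

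For claim~(c), the point $Y_+$ is rank-deficient as an element of $\St(d,p_+)^m$ (its last $p_+ - p$ columns vanish), so the hypotheses of Theorem~\ref{thm:escaperankdeficient} are in force for $Y_+$: take $z = e_{p+1} \in \reals^{p_+}$, which is nonzero and satisfies $Y_+ z = 0$ (it picks out the $(p+1)$st column, which is zero), and take $u \in \Rn$ with $u\transpose S u < 0$ (such $u$ exists since $X$ is not KKT, by Theorem~\ref{thm:KKTS}). Theorem~\ref{thm:escaperankdeficient} then gives directly that $\dot Y = u z\transpose = u e_{p+1}\transpose \in \T_{Y_+}\St(d,p_+)^m$ is a descent direction for $g$ from $Y_+$. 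I would note that $\|z\| = \|e_{p+1}\| = 1$, matching the normalization used in that theorem's Taylor expansion, so no rescaling is needed.

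There is essentially no hard step here — the corollary is a packaging of Theorem~\ref{thm:escaperankdeficient} after the observation that zero-padding preserves $X$, hence preserves $g$, criticality, and $S$. The only point requiring a line of care is confirming that $\dot Y = u e_{p+1}\transpose$ genuinely lies in $\T_{Y_+}\St(d,p_+)^m$, which follows because $Y_+ \dot Y\transpose = Y_+ e_{p+1} u\transpose = 0$ (again since the $(p+1)$st column of $Y_+$ vanishes), so the tangency condition~\eqref{eq:tangentspace} holds trivially; this is exactly the same mechanism already used inside the proof of Theorem~\ref{thm:escaperankdeficient}, so it could even be cited rather than re-derived.
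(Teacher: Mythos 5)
Your proof is correct and follows the paper's argument exactly: zero-padding preserves $X$ (hence feasibility, $g$, and $S$), $SY=0$ gives $SY_+=0$, and the rank deficiency of $Y_+$ lets you invoke Theorem~\ref{thm:escaperankdeficient} with $z=e_{p+1}$. The paper's proof is a terser version of the same three observations, so there is nothing to flag.
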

\begin{proof}
Since $Y_+^{} Y_+\transpose = YY\transpose$, $Y_+$ is indeed feasible for~$(\text{RP}_{p_+})$ and $g(Y_+) = g(Y)$. Since $Y$ is critical, $SY = 0$, so $SY_+ = 0$:
$Y_+$ is a critical point. %
The rest follows from Theorem~\ref{thm:escaperankdeficient}.
\end{proof}
Later in this section, we show how to escape full-rank points without increasing the rank, under additional assumptions---see Proposition~\ref{prop:facerankreduction}.

An important question remains: for moderate $p$, should we expect to encounter second-order critical points $Y$ that do not correspond to KKT points of~\eqref{eq:P}? We provide partial answers for concave $f$ below. In particular, this covers the important case of linear costs. The stronger results do not include strictly convex functions, as for these \eqref{eq:P} can have solutions of arbitrary rank.

The result below shows that $Y$ can only be a critical point of~\eqref{eq:RP} if $YY\transpose$ is a critical point for~\eqref{eq:P} restricted to the face $\calF_X$; and similarly for second-order critical points. This brings a useful corollary.
\begin{lemma}
	Let $Y \in \Stdpm$ and $X = YY\transpose \in \calC$. 
	If $Y$ is a critical point (resp., a second-order critical point) of~\eqref{eq:RP}, then $X$ is a critical point (resp., a second-order critical point) of $\min_{\tilde X\in \calF_X} f(\tilde X)$, where $\calF_X$~\eqref{eq:calF} is the face of $\calC$ which contains $X$ in its relative interior.
	\label{lem:faces}
\end{lemma}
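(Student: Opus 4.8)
The plan is to transfer the first- and second-order criticality of $Y$ for the smooth problem~\eqref{eq:RP} into the corresponding criticality of $X = YY\transpose$ for the (now smooth, since $\calF_X$ is a relatively open convex set in an affine subspace) restricted problem $\min_{\tilde X \in \calF_X} f(\tilde X)$. The key observation is the explicit parametrization of the face from~\eqref{eq:calF}: every $\tilde X$ near $X$ in $\calF_X$ has the form $\tilde X = Y(I_p + A)Y\transpose$ with $A \in \ker \calL_X$, i.e.\ $A \in \Spp$ with $\sbdsmall{YAY\transpose} = 0$. So the tangent space to the affine hull of $\calF_X$ at $X$ is exactly $\{ YAY\transpose : A \in \Spp, \ \sbdsmall{YAY\transpose} = 0 \}$. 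I would first check that every such direction $\dot X = YAY\transpose$ is realized as $\dot X = \dot Y Y\transpose + Y \dot Y\transpose$ for some tangent vector $\dot Y \in \T_Y\Stdpm$: the natural candidate is $\dot Y = \tfrac12 Y A$, which gives $\dot Y Y\transpose + Y \dot Y\transpose = YAY\transpose$, and $\dot Y$ is tangent precisely because $\sbd{\dot Y Y\transpose + Y \dot Y\transpose} = \sbdsmall{YAY\transpose} = 0$ by assumption. This correspondence $A \mapsto \dot Y = \tfrac12 YA$ is the bridge between the two problems.

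For the first-order part: $Y$ critical means $SY = 0$ with $S = S(X)$ from~\eqref{eq:S}, hence $SX = 0$ and also $\inner{\nabla f(X)}{\dot X} = \inner{S + \sbd{\nabla f(X) X}}{\dot X} = \inner{S}{\dot X}$ for any $\dot X$ with zero diagonal blocks (the $\sbd$ term drops out). For $\dot X = YAY\transpose$ a feasible direction in $\calF_X$, $\inner{S}{YAY\transpose} = \inner{SY}{AY\transpose} = 0$, so the directional derivative of $f$ along every direction of $\calF_X$ vanishes at $X$; since $\calF_X$ is relatively open around $X$, this is exactly the statement that $X$ is a critical point of $f$ restricted to $\calF_X$.

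For the second-order part: use the reformulated second-order condition~\eqref{eq:secondorderconditionbis}, namely $\inner{\dot X}{\nabla^2 f(X)[\dot X]} + 2\inner{\dot Y}{S\dot Y} \geq 0$ for all tangent $\dot Y$. Plug in $\dot Y = \tfrac12 YA$ with $A \in \Spp$, $\sbdsmall{YAY\transpose} = 0$: then $\dot X = YAY\transpose$ and $\inner{\dot Y}{S\dot Y} = \tfrac14 \inner{YA}{SYA} = \tfrac14 \inner{A}{(Y\transpose S Y)A}$. Since $Y$ is critical, $SY = 0$, so $Y\transpose S Y = 0$ and this term vanishes entirely. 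Hence $\inner{\dot X}{\nabla^2 f(X)[\dot X]} \geq 0$ for every direction $\dot X$ in the (linear) tangent space of $\calF_X$, which is precisely the second-order necessary condition for $X$ to be a second-order critical point of the restricted problem. (One should note the Hessian of $\tilde X \mapsto f(\tilde X)$ restricted to an affine subspace is just the restriction of $\nabla^2 f(X)$ to that subspace, so there is no extra curvature term — this is what makes the identity clean.)

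The main obstacle, such as it is, is bookkeeping rather than conceptual: one must be careful that the parametrization~\eqref{eq:calF} really describes a neighborhood of $X$ in $\calF_X$ (it does, since $X = YY\transpose$ with $Y$ full rank, so $\rank(X) = p$ and Pataki's construction applies exactly as stated), and one must confirm that the map $A \mapsto \tfrac12 YA$ actually surjects onto enough of $\T_Y\Stdpm$ — in fact we only need it to hit directions whose associated $\dot X$ spans the tangent space of $\calF_X$, which it does by construction. A minor subtlety is that different $\dot Y$'s can induce the same $\dot X$ (the vertical directions $Y\Omega$, $\Omega$ skew, give $\dot X = 0$), but this is harmless: we only ever push forward, never pull back, so we just need one preimage per direction in $\calF_X$, and $\dot Y = \tfrac12 YA$ supplies it. I would close by remarking that the contrapositive/diagnostic value of the lemma is what the paper will exploit next: if $X$ is \emph{not} critical on $\calF_X$, then $Y$ cannot be critical, and more usefully, failure of the face-restricted second-order condition yields an escape direction within the current rank.
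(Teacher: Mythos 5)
Your proposal is correct and follows essentially the same route as the paper: parametrize tangent directions to $\calF_X$ as $\dot X = YAY\transpose$ with $A\in\ker\calL_X$, realize each via $\dot Y = \tfrac12 YA \in \T_Y\Stdpm$, reduce $\innersmall{\nabla f(X)}{\dot X}$ to $\innersmall{S}{\dot X}=0$ using $SY=0$ for the first-order claim, and observe that the $\innersmall{\dot Y}{S\dot Y}$ term in~\eqref{eq:secondorderconditionbis} vanishes (again by $SY=0$) for the second-order claim. Your extra remarks on the surjectivity onto the tangent space of the face and the flatness of the restricted Hessian are correct bookkeeping that the paper leaves implicit.
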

\begin{proof}
	If $Y$ is first-order critical, then $SY = 0$~\eqref{eq:S}. This implies that $\nabla f(X)$ is orthogonal to all directions $\dot X = Y\dot Y\transpose + \dot Y Y\transpose$ for $\dot Y \in \T_Y\Stdpm$. Indeed, $\sbdop(\dot X) = 0$, hence
	\begin{align*}
		\innersmall{\nabla f(X)}{\dot X} = \innersmall{S}{\dot X} = 0.
	\end{align*}
	(The subspace spanned by all such $\dot X$'s has dimension $np - m\frac{d(d+1)}{2} - \frac{p(p-1)}{2}$.)
	From~\eqref{eq:calF}, observe that $\dot X$ is parallel to the face $\calF_X$ iff $\dot X = YAY\transpose$ for $A\in\Spp$ with $\sbd{YAY\transpose} = 0$. Thus, $\nabla f(X)$ is, in particular, orthogonal to $\calF_X$.
	This shows $X$ is a first-order critical point for $\min_{\tilde X\in \calF_X} f(\tilde X)$, since $X$ is in the relative interior of the face.
	Further consider~\eqref{eq:secondorderconditionbis} for second-order critical $Y$ and all $\dot X = YAY\transpose$. Since $\dot Y = \frac{1}{2}YA$ and $SY = 0$, it follows that
	\begin{align*}
	0 \leq \innerbig{\dot X}{\nabla^2 f(X)[\dot X]} + 2\innerbig{\dot Y}{S \dot Y} = \innerbig{\dot X}{\nabla^2 f(X)[\dot X]}.
	\end{align*}
	Thus, $X$ is a second-order critical point for the face-restricted optimization problem.
\end{proof}
%\begin{proof} %
%
%
%
%
%
%
%	%
%
%
%
%
%
%	%
%
%
%
The following corollary can be put in perspective with~\citep[Thm.\,3.4]{burer2005local}. The latter states a similar result for~\eqref{eq:P} with general linear equality constraints, for linear $f$. Their result characterizes local optimizers of~\eqref{eq:RP}, whereas the following result characterizes first- and second-order critical points (computationally more manageable objects).
\begin{corollary} Theorem~\ref{thm:rankdeficientY} and Lemma~\ref{lem:faces} imply the following, for $Y\in\Stdpm$ and $X = YY\transpose\in\calC$.
	\begin{itemize}
		\item If $f$ is linear and $Y$ is critical, then $f$ is constant over $\calF_X$. If furthermore $Y$ is second-order critical and $p > \floorr{p^*}$~\eqref{eq:patakirank}, then either $X$ is globally optimal for~\eqref{eq:P}, or ($Y$ has full rank and) the face $\calF_X$ has positive dimension~\eqref{eq:dimF} and is suboptimal.
		\item If $f$ is strongly concave and $Y$ is second-order critical, then $X$ is an extreme point (since $\dot X = YAY\transpose \neq 0 \Rightarrow \innersmall{\dot Y}{\Hess\,g(Y)[\dot Y]} = \innersmall{\dot X}{\nabla^2 f(X)[\dot X]} < 0$, a contradiction). In particular, all second-order critical points of~\eqref{eq:RP} have rank at most $\floorr{p^*}$, and $p > \floorr{p^*} \Rightarrow X$ is KKT (since $Y$ must be rank-deficient).
		\item If $f$ is convex (resp., strictly convex) and $Y$ is critical, then $X$ is optimal (resp., the unique optimizer) for~\eqref{eq:P} restricted to $\calF_X$.
	\end{itemize}
	\label{cor:concaveffacerestriction}
\end{corollary}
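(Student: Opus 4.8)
The plan is to thread together Lemma~\ref{lem:faces} (criticality of $Y$ forces $X=YY\transpose$ to be critical for $f$ restricted to its face $\calF_X$), elementary convex analysis on a face, Theorem~\ref{thm:rankdeficientY}, and the Pataki rank bound~\eqref{eq:patakirank}. The computation I would set up once and reuse is this: the directions parallel to $\calF_X$ are exactly the $\dot X = YAY\transpose$ with $A\in\Spp$ and $\sbd{YAY\transpose}=0$, and this needs no full-rank hypothesis on $Y$, since $\col(Y)=\col(X)$ forces any symmetric matrix with column space in $\col(Y)$ to equal $YAY\transpose$ for some $A$. For each such $A$, the vector $\dot Y=\tfrac12 YA$ lies in $\T_Y\Stdpm$~\eqref{eq:tangentspace} and gives $\dot Y Y\transpose + Y\dot Y\transpose = YAY\transpose=\dot X$; at a critical point $SY=0$~\eqref{eq:firstordercondition}, so $S\dot Y=0$ and the second-order condition~\eqref{eq:secondorderconditionbis} collapses to $\innersmall{\dot X}{\nabla^2 f(X)[\dot X]}\ge 0$ along these directions.

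For linear $f$ I would use $\nabla f\equiv C$: Lemma~\ref{lem:faces} makes $X$ critical for $\min_{\tilde X\in\calF_X}\innersmall{C}{\tilde X}$, and since $X$ lies in the relative interior of $\calF_X$, this says $C\perp\spann(\calF_X-X)$, i.e.\ $f$ is constant on $\calF_X$. Then, assuming $Y$ is also second-order critical with $p>\floorr{p^*}$: if $Y$ is rank deficient, Theorem~\ref{thm:rankdeficientY} makes $X$ a KKT point, hence globally optimal since a linear cost is convex (Lemma~\ref{lem:kktpoint}); if $Y$ is full rank, then $\rank(X)=p\ge\floorr{p^*}+1>p^*$, so $X$ is not extreme by~\eqref{eq:patakirank}, whence $\dim\calF_X>0$, and because $f$ is constant on $\calF_X$ that face is suboptimal whenever $X$ is not globally optimal --- the announced dichotomy.

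For strongly concave $f$ and $Y$ second-order critical, I would feed the collapsed inequality above every $A\in\Spp$ with $\sbd{YAY\transpose}=0$, obtaining $\innersmall{\dot X}{\nabla^2 f(X)[\dot X]}\ge 0$ for $\dot X=YAY\transpose$; strong concavity makes $\nabla^2 f(X)$ negative definite, forcing $\dot X=0$, so $\spann(\calF_X-X)=\{0\}$ and $X$ is an extreme point of $\calC$. By~\eqref{eq:patakirank} this gives $\rank(Y)=\rank(X)\le\floorr{p^*}$, so for $p>\floorr{p^*}$ the point $Y$ is necessarily rank deficient and Theorem~\ref{thm:rankdeficientY} then yields that $X$ is KKT. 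For convex (resp.\ strictly convex) $f$ and $Y$ critical, Lemma~\ref{lem:faces} makes $X$ critical for the convex program $\min_{\tilde X\in\calF_X}f(\tilde X)$; since $X$ is in the relative interior, $\nabla f(X)$ annihilates $\spann(\calF_X-X)$, so convexity gives $f(\tilde X)\ge f(X)+\innersmall{\nabla f(X)}{\tilde X-X}=f(X)$ on all of $\calF_X$, and strict convexity along any segment of optimizers rules out a second minimizer.

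The step I expect to require the most care --- the rest being convex-analytic bookkeeping --- is the reusable lemma itself: showing that $\tfrac12 YA$ already ranges over enough tangent directions to detect all of $\spann(\calF_X-X)$, and that the $S$-term of~\eqref{eq:secondorderconditionbis} genuinely vanishes on them, even when $Y$ is rank deficient. This is precisely what legitimizes invoking the face description~\eqref{eq:calF}, which the paper states for full-rank representatives, at an arbitrary second-order critical $Y$.
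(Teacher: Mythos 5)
Your proof is correct and follows the paper's own route — Lemma~\ref{lem:faces} collapsing~\eqref{eq:secondorderconditionbis} along face-parallel directions $\dot X = YAY\transpose$ via $\dot Y = \tfrac12 YA$, then Theorem~\ref{thm:rankdeficientY} and the Pataki bound~\eqref{eq:patakirank} — which the paper gives only as references and parenthetical hints rather than a written-out proof. The one detail you supply that the paper leaves implicit is the verification that the parameterization $\{YAY\transpose : A\in\Spp,\ \sbdsmall{YAY\transpose}=0\}$ describes $\spann(\calF_X - X)$ even when $Y$ is rank deficient, whereas~\eqref{eq:calF} is stated for a full-rank factorization; your column-space argument closes that gap correctly.
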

For linear $f$, Corollary~\ref{cor:concaveffacerestriction} is not quite sufficient to determine how large $p$ must be to exclude ``bad'' second-order critical points. Paraphrasing the comment following~\citep[Thm.\,3.4]{burer2005local}, the latter showed that, for linear $f$ and $p > p^*$ (essentially), local optima of~\eqref{eq:RP} are global optima, \emph{with the caveat that positive-dimensional faces (over which $f$ must be constant) may harbor non-global local optima}. In the literature, this has sometimes been quoted as saying that local optima are global optima if $f$ is not constant over any proper face of $\calC$~\citep[see, e.g.,][footnote 3]{mccoy2011robustpca}, but there is no indication that this is a mild condition.\footnote{In fact, we found in numerical experiments (not reported) that, for $d = 1$, $\Delta = 1$ (thus, almost all faces have dimension 1 and $p > p^*$) and a random linear cost $\innersmall{C}{X}$, we could easily find a face $\calF_X$ of dimension 1 over which the cost is constant but not optimal.}

We thus set out to further refine the implications of second-order criticality of $Y$. We do so by leveraging the tight relationship~\eqref{eq:secondorderconditionbis} between the Hessian of the cost on~\eqref{eq:RP} and the dual certificate $S$.
\begin{theorem}
	Assume $f$ is concave. Let $Y\in\Stdpm$ be a second-order critical point for~\eqref{eq:RP}. The matrix $X = YY\transpose$ belongs to the relative interior of the face $\calF_X$~\eqref{eq:calF}. If $Y$ is rank-deficient, then $S\succeq 0$. If $Y$ is full-rank, then $S$ has at most
	\begin{align}
		\floorr*{\frac{\dim \calF_X - \Delta}{p}}
	\end{align}
	negative eigenvalues.
	Hence, if $X$ is not a KKT point for~\eqref{eq:P}, then it has rank $p$ and $\dim \calF_X \geq \Delta + p$.
	\label{thm:eigS}
\end{theorem}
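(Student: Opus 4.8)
The plan is to exploit the reformulated second-order condition~\eqref{eq:secondorderconditionbis}, namely that for all $\dot Y \in \T_Y\Stdpm$ with $\dot X = \dot Y Y\transpose + Y\dot Y\transpose$,
\[
\innerbig{\dot X}{\nabla^2 f(X)[\dot X]} + 2\innerbig{\dot Y}{S\dot Y} \geq 0,
\]
and to use concavity of $f$ to throw away the first term: since $\nabla^2 f(X) \preceq 0$, the inequality forces $\innerbig{\dot Y}{S\dot Y} \geq -\tfrac12\innerbig{\dot X}{\nabla^2 f(X)[\dot X]} \geq 0$ for \emph{every} tangent $\dot Y$. The rank-deficient case is then immediate: as in Theorem~\ref{thm:rankdeficientY}, pick $z\neq 0$ with $Yz = 0$; for any $x\in\Rn$ the matrix $\dot Y = xz\transpose$ is tangent and has $\dot X = 0$, so $\|z\|^2 x\transpose S x \geq 0$, whence $S\succeq 0$. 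The statement that $X$ lies in the relative interior of $\calF_X$ is just the general fact (Theorem~18.2 of~\citep{rockafellar1997convex}) recalled after the definition of faces, so nothing new is needed there.

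The substantive part is the full-rank case. Here $Y\in\Rnp$ is invertible-on-its-columns (rank $p$), so the map $\dot Y \mapsto \dot X = \dot Y Y\transpose + Y \dot Y\transpose$ restricted to $\dot Y$ of the form $\tfrac12 Y A$ with $A\in\Spp$ recovers all $\dot X = YAY\transpose$; and $\dot Y = \tfrac12 YA$ is tangent precisely when $\sbd{YAY\transpose} = 0$, i.e.\ $A\in\ker\calL_X$ (cf.\ the proof of Lemma~\ref{lem:faces}). For such directions $S\dot Y = \tfrac12 SYA = 0$ is \emph{not} what we want — those give the face-restricted condition of Lemma~\ref{lem:faces}. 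Instead I want to feed in tangent vectors that produce $\dot X = 0$ but $\dot Y \neq 0$, so that only the $\innerbig{\dot Y}{S\dot Y}$ term survives; in the full-rank case $\dot X = \dot Y Y\transpose + Y\dot Y\transpose = 0$ means $\dot Y Y\transpose$ is skew, i.e.\ $\dot Y = \Omega (Y^+)\transpose$... more cleanly: $\dot Y = Y(Y\transpose Y)^{-1}\Omega$ for skew $\Omega\in\Spp$ gives $\dot Y Y\transpose + Y \dot Y\transpose$ proportional to $Y(Y\transpose Y)^{-1}\Omega Y\transpose + $ its transpose, which need not vanish. So the honest route is: the tangent space at $Y$ (full rank) has dimension $np - md(d+1)/2$; the subspace mapping to a fixed $\dot X$ inside $\calF_X$ is an affine translate of $\{\dot Y : \dot X(\dot Y) = 0\} = \{Y\Omega : \Omega\in\Spp \text{ skew}\}$, of dimension $p(p-1)/2$. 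On the complement, $\dot X$ ranges over the tangent space to $\calF_X$ at $X$ (dimension $\dim\calF_X$) together with the directions leaving $\calF_X$; but $S$ vanishes on the vertical part.

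So the counting argument I would run is the following. Restrict attention to tangent vectors $\dot Y$ of the form $\dot Y = \tfrac12 YA + Y\Omega$ with $A\in\Spp$, $\Omega\in\Spp$ skew, and $\sbd{YAY\transpose}=0$ (these are exactly the tangent vectors with $\dot X = YAY\transpose$ parallel to $\calF_X$; the dimension of this family is $\dim\calF_X + p(p-1)/2$, using $\dim\ker\calL_X = \dim\calF_X$). For such $\dot Y$, using $SY = 0$ we get $S\dot Y = 0$, which is useless. The correct object: consider instead the quadratic form $\dot Y\mapsto \innerbig{\dot Y}{S\dot Y}$ on the \emph{whole} tangent space and observe it is $\geq 0$ there by the argument of the first paragraph (concavity kills $\nabla^2 f$). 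Its kernel on $\Rnp$ is $\{\dot Y : S\dot Y = 0\} = \{\dot Y : \text{columns of }\dot Y\in\ker S\}$, of dimension $p\cdot\dim\ker S = p(n - \rank S)$. The number of negative eigenvalues of $S$ is $n_-(S)$; on $\Rnp$, $\dot Y\mapsto\innerbig{\dot Y}{S\dot Y}$ has exactly $p\cdot n_-(S)$ negative eigenvalues. For the restriction of a symmetric form with $p\cdot n_-(S)$ negative directions to the tangent subspace (codimension $md(d+1)/2$) to be positive semidefinite, we need $p\cdot n_-(S) \leq md(d+1)/2 + \big(\text{extra negative directions absorbed}\big)$ — and here is where $\calF_X$ enters: the negative directions of $S$ that are \emph{not} killed must lie in the space $\{Y\Omega\}\oplus(\text{vertical})$ and in directions that exit the tangent space. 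Carefully: the subspace of $\Rnp$ on which $S$ is negative definite, intersected with $\T_Y\Stdpm$, must be trivial, forcing $p\cdot n_-(S) \leq md(d+1)/2$; sharpening via $\dim\calF_X = p(p+1)/2 - \rank\calL_X$ and the fact that the vertical directions $Y\Omega$ (dimension $p(p-1)/2$) are automatically tangent and automatically in $\ker S$, we can improve the codimension budget to get $n_-(S) \leq \lfloor(\dim\calF_X - \Delta)/p\rfloor$ after substituting $\Delta = p(p+1)/2 - md(d+1)/2$. The final clause — if $X$ is not KKT then $n_-(S)\geq 1$ so $\dim\calF_X \geq \Delta + p$, and $X$ has rank $p$ because the rank-deficient case already gives $S\succeq 0$ — is then immediate from Theorem~\ref{thm:KKTS}.

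The main obstacle is getting the codimension bookkeeping exactly right: one must show that the only ``free'' negative directions of the ambient form $\dot Y\mapsto\innerbig{\dot Y}{S\dot Y}$ that survive inside the tangent space are confined to a subspace of dimension $\dim\calF_X - \Delta$ per negative eigenvalue of $S$, i.e.\ that the normal space to $\Stdpm$ (dimension $md(d+1)/2$) together with the part of $\ker\calL_X$ not seen by $S$ accounts for the difference $\tfrac{p(p+1)}{2} - \rank\calL_X - \Delta = md\tfrac{d(d+1)}{2} - \rank\calL_X \ge 0$... I expect the clean way is: restrict~\eqref{eq:secondorderconditionbis} to the subspace $\mathcal V = \{Y\Omega + (\text{one chosen negative direction of }S\text{ lifted}) : \ldots\}$ and count, or to phrase it as: the form $B(\dot Y) = 2\innerbig{\dot Y}{S\dot Y}$ plus the (nonpositive) Hessian term is PSD on $\T_Y$, hence on the subspace $\{\dot Y : \dot X \text{ parallel to } \calF_X\}$ (dimension $\dim\calF_X + p(p-1)/2$) the form $B$ alone is PSD; but $B$ restricted there has the same signature as $S$ restricted to... and iterating, $n_-$ of $B$ on this subspace is at least $p\cdot n_-(S) - (\text{codimension of the subspace inside }\Rnp)$. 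Equating and solving for $n_-(S)$ is the computation I would do carefully; everything else is soft.
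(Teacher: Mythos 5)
Your setup is sound: using the reformulated second-order condition~\eqref{eq:secondorderconditionbis} together with concavity to obtain $\innersmall{\dot Y}{S\dot Y}\geq 0$ for all $\dot Y\in\T_Y\Stdpm$, and then counting negative eigenvalues of $S$ by Cauchy interlacing, is exactly the right plan, and it matches the paper's approach. The rank-deficient case and the relative-interior remark are handled correctly. But in the full-rank case the bookkeeping stalls at the wrong subspace: the last paragraph restricts $B(\dot Y)=2\innersmall{\dot Y}{S\dot Y}$ to $\{\dot Y : \dot X \text{ parallel to }\calF_X\}$, which has dimension $\dim\calF_X + p(p-1)/2$. It is true that $B$ vanishes there (all such $\dot Y$ lie in $\{YR : R\in\Rpp\}$ and $SY=0$), but interlacing against \emph{that} subspace gives only $p\cdot n_-(S)\leq np - \dim\calF_X - p(p-1)/2 = p(n-p) + \rank\calL_X$, which is far weaker than the target $\dim\calF_X - \Delta$. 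So the computation you defer at the end would not, as sketched, produce the stated bound.

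The missing idea is to take the \emph{sum}, not the intersection, of the two subspaces you already have in hand. Let $U$ span the (vectorized) tangent space $\T_Y\Stdpm$, dimension $k = np - md(d+1)/2$, on which $B\geq 0$ by concavity and second-order criticality; and let $V$ span the full vertical-like family $\{YR : R\in\Rpp\}$, dimension $p^2$, on which $B = 0$ because $SY=0$. Then $B\geq 0$ on $\spann(U\cup V)$. Decomposing $R = R_\mathrm{skew} + R_{\ker\calL_X} + R_{(\ker\calL_X)^\perp}$, the first two pieces give $\dot Y$ already tangent, and the third does not, so the overlap of $U$ and $V$ has dimension $p(p-1)/2 + \dim\calF_X$. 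Hence $k' = k + p^2 - p(p-1)/2 - \dim\calF_X$, and the codimension of $\spann(U\cup V)$ in $\reals^{np}$ is $np - k' = md(d+1)/2 - p(p+1)/2 + \dim\calF_X = \dim\calF_X - \Delta$. Interlacing then bounds the number of negative eigenvalues of $I_p\otimes S$, namely $p\cdot n_-(S)$, by this codimension, yielding $n_-(S)\leq\floorr{(\dim\calF_X - \Delta)/p}$. The extra $p^2 - p(p-1)/2 - \dim\calF_X$ directions in $V\setminus U$---where $S$ acts as zero but which are \emph{not} tangent---are precisely what tightens the codimension budget from your estimate to the one in the theorem.
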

\begin{proof}
	Given Theorems~\ref{thm:KKTS} and~\ref{thm:rankdeficientY}, we need only focus on full-rank $Y$'s.
	Since $Y$ is second-order critical, inequality~\eqref{eq:secondorderconditionbis} holds. Since furthermore $f$ is concave, $\innersmall{\dot X}{\nabla^2 f(X)[\dot X]} \leq 0$ for all $\dot X$, so that $\innersmall{\dot Y}{S\dot Y} \geq 0$ for all $\dot Y \in \T_Y\Stdpm$. Let $k = \dim \Stdpm$ and $U\in\reals^{np\times k}$, $U\transpose U = I_k$ denote an orthonormal basis of the space spanned by the vectorized tangent vectors $\vecc(\dot Y)$. Then, $U\transpose(I_p \otimes S)U$ is positive semidefinite (for linear $f$, it inherits
	the spectrum of $\frac{1}{2} \Hess\,g(Y)$). On the other hand, since $Y$ is a critical point, $SY = 0$. Let $V\in\reals^{np\times p^2}, V\transpose V = I_{p^2}$ denote an orthonormal basis of the space spanned by the vectors $\vecc(YR)$ for $R\in\Rpp$. Clearly, $(I_p\otimes S)V = 0$. Let $k'$ denote the dimension of the space spanned by the columns of both $U$ and $V$, and let $W\in\reals^{np\times k'}, W\transpose W = I_{k'}$ be an orthonormal basis for this space. It follows that $M = W\transpose (I_p\otimes S)W$ is positive semidefinite.
	
	Let $\lambda_0 \leq \cdots \leq \lambda_{n-1}$ denote the eigenvalues of $S$. Likewise, $\tilde \lambda_0 \leq \cdots \leq \tilde \lambda_{np-1}$ denote the eigenvalues of $I_p\otimes S$. These are simply the eigenvalues of $S$, each repeated $p$ times, thus: $\tilde \lambda_i = \lambda_{\floorr{i/p}}$. Let $\mu_0 \leq \cdots \leq \mu_{k'-1}$ denote the eigenvalues of $M$. The Cauchy interlacing theorem states that, for all $i$,
	\begin{align}
		\tilde \lambda_i \leq \mu_i \leq \tilde \lambda_{i + np - k'}.
		\label{eq:cauchyinterlace}
	\end{align}
	In particular, since $M\succeq 0$, we have $0 \leq \mu_0 \leq \lambda_{\floorr{(np - k')/p}}$. It remains to determine $k'$.
	
	From Section~\ref{sec:geometryriemannian}, recall that $k = np - md(d+1)/2$. We now investigate how many new dimensions $V$ adds to $U$. All matrices $R\in\Rpp$ admit a unique decomposition as $R = R_\mathrm{skew} + R_{\ker \calL} + R_{(\ker \calL)^\bot}$, where $R_\mathrm{skew}$ is skew-symmetric, $R_{\ker \calL}$ is in the kernel of $\calL_X$~\eqref{eq:calL} and $R_{(\ker \calL)^\bot}$ is in the orthogonal complement of the latter in $\Spp$. Clearly, $YR_\mathrm{skew}$ and $YR_{\ker \calL}$ are tangent vectors, thus vectorized versions of these are already in the span of $U$. On the other hand, by definition, $YR_{(\ker \calL)^\bot}$ is not tangent at $Y$ (if it is nonzero). This raises $k'$ (the rank of $W$) to:
	\begin{align}
		k' & = k + p^2 - \frac{p(p-1)}{2} - \dim \calF_X = np - m\frac{d(d+1)}{2} + \frac{p(p+1)}{2} - \dim \calF_X.
		\label{eq:kprime}
	\end{align}
	Combine with $\lambda_{\floorr{(np - k')/p}} \geq 0$ and the definition of $\Delta$~\eqref{eq:dimF} to conclude.
\end{proof}
Theorem~\ref{thm:eigS}
is particularly meaningful for linear $f$, considering the intuition that for $p \geq p^*$, generically, $\dim \calF_X = \Delta \geq 0$~\eqref{eq:dimF} (Theorem~\ref{thm:genericfacedimension} gives a proof for $d=1$). Thus, for such $p$, a second-order critical point is either globally optimal, or it maps to a face of abnormally high dimension, over which $f$ must be constant and suboptimal. We could not produce an example of the latter.
We summarize this in a corollary, followed by a question.
\begin{corollary}
	Assume $f$ is linear and fix $p > p^*$, hence $\Delta > 0$. If $Y\in\Stdpm$ is a second-order critical point for~\eqref{eq:RP} but $X = YY\transpose$ is not a global optimizer for~\eqref{eq:P}, then $\rank(X) = p$, $\dim \calF_X \geq \Delta + p$ and $f$ is constant over $\calF_X$.
	\label{cor:linearfconstantface}
\end{corollary}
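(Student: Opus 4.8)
The plan is to derive the corollary directly from Theorem~\ref{thm:eigS}, exploiting that a linear $f$ is simultaneously concave and convex. As $f$ is concave, Theorem~\ref{thm:eigS} applies to the second-order critical point $Y$: it gives that $X = YY\transpose$ lies in the relative interior of $\calF_X$, and that if $X$ fails to be a KKT point for~\eqref{eq:P}, then $\rank(X) = p$ and $\dim\calF_X \geq \Delta + p$. So everything reduces to converting the hypothesis ``$X$ is not globally optimal'' into ``$X$ is not a KKT point''.

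That conversion is where I would invoke convexity: by Lemma~\ref{lem:kktpoint}, for convex $f$ every KKT point of~\eqref{eq:P} is a global optimizer, so a point that is not globally optimal cannot be KKT. (Equivalently, one may argue directly that $Y$ must be full-rank, since a rank-deficient second-order critical point gives $S \succeq 0$ by Theorem~\ref{thm:eigS}, hence $X$ KKT by Theorem~\ref{thm:KKTS}, hence globally optimal, a contradiction.) Feeding ``$X$ not KKT'' into the last sentence of Theorem~\ref{thm:eigS} then yields $\rank(X) = p$ and $\dim\calF_X \geq \Delta + p$.

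Finally I would record that $f$ is constant on $\calF_X$: a second-order critical point is in particular first-order critical, so this is precisely the first bullet of Corollary~\ref{cor:concaveffacerestriction} (equivalently, Lemma~\ref{lem:faces} shows $X$ is a critical point of the linear program $\min_{\tilde X \in \calF_X} f(\tilde X)$ while lying in the relative interior of $\calF_X$, which forces $f$ to be constant on that face). There is no real obstacle here, since the statement merely assembles established results; the one point deserving care is not to conflate concavity of $f$ (which powers Theorem~\ref{thm:eigS}) with its convexity (which licenses the passage from ``not globally optimal'' to ``not KKT'')---both hold because $f$ is linear, and both are genuinely used. The hypothesis $p > p^*$, i.e.\ $\Delta > 0$, is not needed for the conclusions themselves; it only makes the bound $\dim\calF_X \geq \Delta + p$ informative by placing $\dim\calF_X$ strictly above the generic face dimension $\Delta$.
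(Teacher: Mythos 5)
Your proof is correct and takes essentially the same route as the paper, whose one-line proof cites Corollary~\ref{cor:concaveffacerestriction}, Theorem~\ref{thm:eigS} and Theorem~\ref{thm:rankdeficientY}: you fill in the same logic, using convexity of linear $f$ (Lemma~\ref{lem:kktpoint}) to pass from ``not globally optimal'' to ``not KKT'', then the last sentence of Theorem~\ref{thm:eigS} for the rank and face-dimension bounds, and Lemma~\ref{lem:faces} or the first bullet of Corollary~\ref{cor:concaveffacerestriction} for constancy of $f$ on $\calF_X$. Your closing observation---that $p > p^*$ is not needed to derive the conclusions, only to make $\dim\calF_X \geq \Delta + p$ exceed the generic face dimension---is also accurate.
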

\begin{proof}
	Use Corollary~\ref{cor:concaveffacerestriction}, Theorem~\ref{thm:eigS} and Theorem~\ref{thm:rankdeficientY}.
\end{proof}
For linear $f(X) = \innersmall{C}{X}$ and $p > p^*$, the question is the following: if $C$ is sampled uniformly at random from the unit-norm symmetric matrices, what is the probability that $f$ is constant over a face $\calF_X$ of dimension $\Delta + p$ or larger, with $\rank(X) = p$? \emph{If it is zero, then almost surely all second-order critical points of~\eqref{eq:RP} are global optimizers.} We do not answer this question here, but refer to Theorem~\ref{thm:genericfacedimension} to argue that there are few such faces.

Theorem~\ref{thm:eigS} is motivation to investigate upper-bounds on the dimensions of faces of $\calC$.
The following result extends~\citep[Thm.\,3.1(i)]{laurent1996facial} to $d\geq 1$.
\begin{theorem}\label{thm:dimFbounds}
	If $X\in\calC$ has rank $p$, then the face $\calF_X$~\eqref{eq:calF} has dimension bounded as:
	\begin{align}
		\frac{p(p+1)}{2} - n\frac{d+1}{2} \ \leq \  \dim \calF_X \ \leq \  \frac{p(p+1)}{2} - p\frac{d+1}{2}.
	\end{align}
	If $p$ is an integer multiple of $d$, the upperbound is attained for some $X$.
\end{theorem}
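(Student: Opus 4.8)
The claimed lower bound is just~\eqref{eq:dimF} in disguise: since $n = md$ one has $n\frac{d+1}{2} = m\frac{d(d+1)}{2}$, so it restates $\dim\calF_X \geq \Delta$, which is the rank--nullity bound applied to $\calL_X$~\eqref{eq:calL}. All the content is in the upper bound, which amounts to $\rank\calL_X \geq p\frac{d+1}{2}$. My plan is to pass to the adjoint: a short computation gives $\calL_X^*(B_1,\dots,B_m) = \sum_{i=1}^m Y_i\transpose B_i^{} Y_i^{}$, so $\rank\calL_X = \dim(\mathcal{V}_1 + \cdots + \mathcal{V}_m)$ with $\mathcal{V}_i := \{ Y_i\transpose B Y_i^{} : B \in \Sdd \}$. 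Writing $R_i \subseteq \Rp$ for the $d$-dimensional subspace spanned by the rows of $Y_i$, and using that $Y_i\transpose Y_i^{}$ is the orthogonal projector onto $R_i$ (because $Y_i^{}Y_i\transpose = I_d$), one checks $\mathcal{V}_i = \{ S \in \Spp : \col(S) \subseteq R_i \}$, a subspace of dimension $\frac{d(d+1)}{2}$. Thus the task reduces to the following purely linear-algebraic statement: if $d$-dimensional subspaces $R_1,\dots,R_m$ of $\Rp$ span $\Rp$ (which holds since $\rank X = p$), then $\dim(\mathcal{V}_1 + \cdots + \mathcal{V}_m) \geq p\frac{d+1}{2}$.

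To prove it I would extract an economical sub-collection of the $R_i$: after relabelling, there is $t \leq m$ for which the partial sums $W_k := R_1 + \cdots + R_k$ are strictly increasing with $W_t = \Rp$; set $d_k := \dim W_k - \dim W_{k-1}$, so $1 \leq d_k \leq d$ and $\sum_{k=1}^t d_k = p$. Then estimate $\dim(\mathcal{V}_1 + \cdots + \mathcal{V}_k)$ by induction on $k$. The key observation is that $\mathcal{V}_1 + \cdots + \mathcal{V}_{k-1}$ consists of symmetric matrices with column space contained in $W_{k-1}$, whereas any element of $\mathcal{V}_k$ lying in that span must have column space contained in $W_{k-1} \cap R_k$, a subspace of dimension $d - d_k$; hence adjoining $\mathcal{V}_k$ raises the dimension by at least $\frac{d(d+1)}{2} - \frac{(d-d_k)(d-d_k+1)}{2} = \frac{1}{2}d_k(2d - d_k + 1)$. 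Summing the increments and using $\sum_k d_k = p$,
\[
  \rank\calL_X \ \geq\ \frac{1}{2}\sum_{k=1}^t d_k(2d - d_k + 1) \ =\ \frac{p(d+1)}{2} + \frac{1}{2}\sum_{k=1}^t d_k(d - d_k) \ \geq\ \frac{p(d+1)}{2},
\]
the last step because $d_k \leq d$. This yields $\dim\calF_X = \frac{p(p+1)}{2} - \rank\calL_X \leq \frac{p(p+1)}{2} - p\frac{d+1}{2}$.

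For tightness when $d$ divides $p$, write $p = qd$ with $q \leq m$, pick mutually orthogonal $d$-dimensional subspaces with $\Rp = R_1 \oplus \cdots \oplus R_q$, let the rows of $Y_1,\dots,Y_q$ be orthonormal bases of $R_1,\dots,R_q$, and let each of $Y_{q+1},\dots,Y_m$ have rows forming an orthonormal basis of $R_1$. Then $Y \in \Stdpm$ has rank $p$, so $X = YY\transpose \in \calC$ has rank $p$, while every $\mathcal{V}_i$ lies inside the ``block-diagonal'' space $\bigoplus_{k=1}^q \{ S \in \Spp : \col(S) \subseteq R_k \}$ of dimension $q\frac{d(d+1)}{2} = p\frac{d+1}{2}$, and the $\mathcal{V}_i$ together span it; hence $\rank\calL_X = p\frac{d+1}{2}$ and the upper bound is attained. (In the special case $p = n$ one may simply take $X = I_n$, which lies in the relative interior of $\calC$, so that $\calF_X = \calC$ already has the advertised dimension.)

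The only delicate point is the inductive dimension count in the second step: getting the exact per-step increment $\frac{1}{2}d_k(2d - d_k + 1)$, and observing that the leftover $\frac{1}{2}\sum_k d_k(d - d_k)$ is nonnegative and vanishes precisely when $d_k = d$ for every $k$ — which is exactly what dictates the orthogonal direct-sum structure used in the tightness construction. The remaining ingredients (the adjoint identity, the identification of $\mathcal{V}_i$, and the extraction of the $R_k$) are routine.
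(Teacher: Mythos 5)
Your proof is correct, and the tightness construction is essentially the one in the paper, but your route to the upper bound is genuinely different. The paper works in the domain of $\calL_X$~\eqref{eq:calL}: it greedily extracts $p$ linearly independent rows of $Y$, forms the constraint matrices $E_{ij}=y_i^{}y_j\transpose+y_j^{}y_i\transpose$ with $i$ a selected row of slice $k$ and $j$ any row of that slice, and then proves this collection of at least $p(d+1)/2$ matrices is linearly independent by expanding each $E_{ij}$ in the basis $\{E_{t_\ell,t_{\ell'}}\}$ and exploiting a block-triangular structure across slices. You instead pass to the adjoint, identify $\rank\calL_X=\dim(\mathcal{V}_1+\cdots+\mathcal{V}_m)$ with $\mathcal{V}_i=\{S\in\Spp:\col(S)\subseteq R_i\}$, and lower-bound the dimension of that sum via an ascending chain $W_1\subset\cdots\subset W_t=\Rp$: at step $k$ the intersection $\mathcal{V}_k\cap(\mathcal{V}_1+\cdots+\mathcal{V}_{k-1})$ is trapped inside the symmetric matrices supported on $R_k\cap W_{k-1}$, whose dimension $(d-d_k)(d-d_k+1)/2$ is read off immediately. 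The two arguments are dual to each other: your increments $d_k$ are the paper's $|c_k|$ restricted to the slices that contribute, and both boil down to the same combinatorial estimate $\sum_k d_k^2\leq pd$ from $d_k\leq d$ and $\sum_k d_k=p$. What the adjoint picture buys you is a cleaner replacement for the paper's most delicate step --- the explicit verification of linear independence of the $E_{ij}$'s --- which in your framing becomes the transparent dimension formula for a sum of two subspaces.
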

\begin{proof}
	Inequality~\eqref{eq:dimF} covers the lower bound. It remains to show that $\calL_X(A) = 0$ imposes at least $p(d+1)/2$ linearly independent constraints on $A\in\Spp$. Let $Y\in\Stdpm$ be such that $X = YY\transpose$, and let $y_1, \ldots, y_n \in \Rp$ denote the rows of $Y$, transposed. 
	Greedily select $p$ linearly independent rows of $Y$, in order, such that row $i$ is picked iff it is linearly independent from rows $y_1$ to $y_{i-1}$. This is always possible since $Y$ has full rank. Write $t = \{ t_1 < \cdots < t_p \}$ to denote the indices of selected rows. Write $s_k = \{ ((k-1)d+1), \ldots, kd \}$ to denote the indices of rows in slice $Y_k$, and let $c_k = s_k \cap t$ be the indices of selected rows in that slice.
	
	For $x_1,\ldots,x_p\in\Rp$ linearly independent, the $p(p+1)/2$ symmetric matrices $x_i^{}x_j\transpose + x_j^{}x_i\transpose$ form a basis of $\Spp$---see for example~\citep[Lem.\,2.1]{laurent1996facial}. Defining $E_{ij} = y_i^{}y_j\transpose + y_j^{}y_i\transpose = E_{ji}$, this means $\calE_t = \{ E_{t_{\ell^{}},t_{\ell'}} : \ell,\ell' = 1\ldots p \}$ forms a basis of $\Spp$. Similarly, since each slice $Y_k$ has orthonormal rows, the matrices $\{E_{ij} : i,j\in s_k\}$ are linearly independent.
	
	The constraint $\calL_X(A) = 0$ means $\inner{A}{E_{ij}} = 0$ for each $k$ and for each $i,j \in s_k$. To establish the theorem, we need to extract a subset $T$ of at least $p(d+1)/2$ of these $md(d+1)/2$ constraint matrices, and guarantee their linear independence. To this end, let
	\begin{align}
		T & = \{ E_{ij} : k \in \{1,\ldots,m\} \textrm{ and } i \in c_k, j \in s_k \}.
	\end{align}
	That is, for each slice $k$, $T$ includes all constraints of  that slice which involve at least one of the selected rows. For each slice $k$, there are $|c_k|d - \frac{|c_k|(|c_k|-1)}{2}$ such constraints---note the correction for double-counting the $E_{ij}$'s where both $i$ and $j$ are in $c_k$. Thus, using $|c_1|+\cdots+|c_m|=p$, the cardinality of $T$ is:
	\begin{align}
		|T| & = \sum_{k = 1}^m |c_k|d - \frac{|c_k|(|c_k|-1)}{2} = p(d+1/2) - \frac{1}{2}\sum_{k = 1}^m |c_k|^2.
		\label{eq:contraintcollectionsize}
	\end{align}
	We first show matrices in $T$ are linearly independent. Then, we show $|T|$ is large enough.
	
	Consider one $E_{ij} \in T$: $i = t_\ell$ for some $\ell$ (otherwise, permute $i$ and $j$) and $i,j \in s_k$ for some $k$. By construction of $t$, we may expand $y_j$ in terms of the rows selected in slices 1 to $k$, i.e., $y_j = \sum_{\ell' = 1}^{\ell_k} \alpha_{j,\ell'} y_{t_{\ell'}}$, where $\ell_k = |c_1| + \cdots + |c_k|$. As a result, $E_{ij}$ expands in the basis $\calE_t$ as follows: $E_{ij} = \sum_{\ell' = 1}^{\ell_k} \alpha_{j,\ell'} E_{t_\ell,t_{\ell'}}$. As noted before, $E_{ij}$'s contributed by a same slice $k$ are linearly independent. Furthermore, they expand in only a subset of the basis, namely: $\calE_t^{(k)} = \{ E_{t_\ell, t_{\ell'}} : \ell_{k-1} < \ell \leq \ell_k, \ell' \leq \ell_k \}$. For $k\neq k'$, $\calE_t^{(k)}$ and $\calE_t^{(k')}$ are disjoint.
	Hence, elements of $T$ are linearly independent.

	It remains to lowerbound~\eqref{eq:contraintcollectionsize}. To this effect, use $|c_k| \leq d$ to obtain:
	\begin{align*}
		\sum_{k = 1}^m |c_k|^2 \leq \max_{x\in\Rm : \|x\|_\infty \leq d, \|x\|_1 = p} \|x\|_2^2 = \floorr*{\frac{p}{d}}d^2 + \left( p - \floorr*{\frac{p}{d}}d \right)^2 \leq pd.
	\end{align*}
	Indeed, the maximum is attained by making as many of the entries of $x$ as large as possible---this can be verified using KKT conditions. %
	In combination with~\eqref{eq:contraintcollectionsize}, this confirms at least $p(d+1/2)-pd/2 = p(d+1)/2$ linearly independent constraints act on $A$, thus upperbounding $\dim \calF_X$.
	
	To conclude, we argue that the proposed upperbound is essentially tight. Indeed, build $Y$ by repeating $m$ times the $d$ first rows of $I_p$, then by replacing its $p$ first rows with $I_p$ (to ensure $Y$ is full-rank). If $p/d$ is an integer, then exactly the $p/d$ first slices each contribute $d(d+1)/2$ independent constraints, i.e., $\dim \calF_{YY\transpose} = p(p+1)/2 - p(d+1)/2$.
\end{proof}
Theorems~\ref{thm:KKTS},~\ref{thm:eigS} and~\ref{thm:dimFbounds} combined give a sufficient condition on $p$ to ensure all second-order critical points of~\eqref{eq:RP} correspond to KKT points of~\eqref{eq:P}.
\begin{corollary}\label{cor:sufficientp}
	Assume $f$ is concave and $p > \frac{d+1}{d+3}n$. If $Y\in\Stdpm$ is a second-order critical point for~\eqref{eq:RP}, then $X = YY\transpose$ is a KKT point for~\eqref{eq:P}. If furthermore $f$ is linear, then all second-order critical points of~\eqref{eq:RP} are global optimizers.
\end{corollary}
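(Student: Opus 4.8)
The plan is to argue by contradiction, simply chaining the lower bound on $\dim\calF_X$ forced by second-order criticality (Theorem~\ref{thm:eigS}) against the upper bound on the dimension of a face of $\calC$ (Theorem~\ref{thm:dimFbounds}). Suppose $Y\in\Stdpm$ is a second-order critical point of~\eqref{eq:RP} but $X = YY\transpose$ is \emph{not} a KKT point of~\eqref{eq:P}. Since $f$ is concave, Theorem~\ref{thm:eigS} applies and yields two facts at once: $Y$ must be full-rank, so $\rank(X) = p$; and $\dim\calF_X \geq \Delta + p$, where, using $n = md$ and the definition~\eqref{eq:dimF}, $\Delta = \frac{p(p+1)}{2} - m\frac{d(d+1)}{2} = \frac{p(p+1)}{2} - \frac{(d+1)n}{2}$.

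Now, because $\rank(X) = p$, the complementary upper bound of Theorem~\ref{thm:dimFbounds} is available: $\dim\calF_X \leq \frac{p(p+1)}{2} - \frac{(d+1)p}{2}$. Combining the two inequalities and cancelling the common term $\frac{p(p+1)}{2}$ leaves $p - \frac{(d+1)n}{2} \leq -\frac{(d+1)p}{2}$, that is $\frac{d+3}{2}\,p \leq \frac{d+1}{2}\,n$, i.e.\ $p \leq \frac{d+1}{d+3}\,n$. This contradicts the standing hypothesis $p > \frac{d+1}{d+3}n$, so $X$ is a KKT point for~\eqref{eq:P}.

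For the second statement, if $f$ is linear it is in particular convex, so by Lemma~\ref{lem:kktpoint} every KKT point of~\eqref{eq:P} is a global optimizer; hence $X = YY\transpose$ is globally optimal for~\eqref{eq:P}. Since the optimal value of~\eqref{eq:P} is a lower bound on that of~\eqref{eq:RP} and $g(Y) = f(X)$ attains this value, $Y$ is a global optimizer of~\eqref{eq:RP}.

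I expect no real obstacle here: all the substance is in Theorems~\ref{thm:eigS} and~\ref{thm:dimFbounds}, and what remains is bookkeeping with the affine expression for $\Delta$. The two points that need care are (i) the upper bound of Theorem~\ref{thm:dimFbounds} is stated for $X$ of rank exactly $p$, and this is precisely what Theorem~\ref{thm:eigS} provides in the non-KKT branch; and (ii) it is concavity of $f$, not merely linearity, that licenses the use of Theorem~\ref{thm:eigS}, so the first conclusion genuinely holds for all concave $f$ while the global-optimality conclusion specializes to the linear (convex) case.
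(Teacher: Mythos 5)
Your proof is correct and takes essentially the same route as the paper: both arguments combine the eigenvalue/face-dimension lower bound from Theorem~\ref{thm:eigS} with the upper bound of Theorem~\ref{thm:dimFbounds}, and the arithmetic reduces to $p(d+3) \leq n(d+1)$. If anything, your phrasing is slightly more careful than the paper's, which casually writes ``$\rank(X) \leq p$'' before invoking the upper bound of Theorem~\ref{thm:dimFbounds} (stated for rank exactly $p$); you correctly note that the non-KKT branch of Theorem~\ref{thm:eigS} is precisely what forces $\rank(X) = p$, and your final step making explicit why global optimality of $X$ for~\eqref{eq:P} passes to $Y$ for~\eqref{eq:RP} is a useful touch the paper leaves implicit.
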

\begin{proof}
	Since $\rank(X) \leq p$, we have $\dim \calF_X - \Delta \leq (n-p)\frac{d+1}{2}$. Theorem~\ref{thm:eigS} then gives $S \succeq 0$~\eqref{eq:S} if $(n-p)(d+1) < 2p$, which is the case. Apply Theorem~\ref{thm:KKTS} to conclude.
\end{proof}
In particular, for the Max-Cut SDP ($d = 1$, $f$ linear), this shows that computing a second-order critical point of~\eqref{eq:RP} with $p = \floorr*{n/2} + 1$ certainly solves~\eqref{eq:P}. This is an interesting and new result, but of course, in practice, it is desirable (and empirically sufficient) to take $p = \floorr*{p^*} + 1$ (much smaller). In the unlikely event we would encounter a ``bad'' second-order critical point with such $p$, the following theorem provides an escape route (for concave $f$) which does not require increasing the rank. It proceeds by moving inside a face.
\begin{proposition}[in-face rank reduction]
	Let $Y\in\Stdpm$ have full-rank, $X = YY\transpose$, and consider the symmetric operator $\calH$ on $\Spp$ defined by $\calH(A) = Y\transpose \sbd{YAY\transpose}Y$. $\calH$ is positive semidefinite and $\dim \ker \calH = \dim \calF_X$. If $A \in \ker \calH$ is nonzero, then $X' = Y(I_p - A/\lambdamin(A))Y\transpose \in \calF_X$ (on the boundary) and $\rank(X') \leq p-1$.
	\label{prop:facerankreduction}
\end{proposition}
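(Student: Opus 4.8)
The plan is to analyze the three claims — positive semidefiniteness of $\calH$, the kernel dimension, and the rank-reduction construction — in that order, exploiting the explicit parametrization of $\calF_X$ in~\eqref{eq:calF} and~\eqref{eq:calL}. First I would observe that for any $A\in\Spp$,
\begin{align*}
\inner{A}{\calH(A)} = \innersmall{A}{Y\transpose\sbd{YAY\transpose}Y} = \innersmall{YAY\transpose}{\sbd{YAY\transpose}} = \sum_{k=1}^m \frobnormsmall{\symmsmall{(YAY\transpose)_{kk}}}^2 \geq 0,
\end{align*}
using that $\sbdop$ symmetrizes diagonal blocks and zeroes the rest, and that $A$ (hence $YAY\transpose$) is symmetric so the diagonal blocks are already symmetric. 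This shows $\calH\succeq 0$ and simultaneously identifies its kernel: $A\in\ker\calH$ iff $\sbdsmall{YAY\transpose}=0$, i.e.\ iff $(Y_kAY_k\transpose) = 0$ for every $k$, which is exactly $\calL_X(A)=0$. Comparing with~\eqref{eq:calF}, the directions $A$ parametrizing $\calF_X$ are precisely those in $\ker\calL_X = \ker\calH$, and since $Y$ has full rank $p$ the map $A\mapsto Y(I_p+A)Y\transpose$ is injective on $\Spp$; hence $\dim\ker\calH = \dim\ker\calL_X = \dim\calF_X$, matching~\eqref{eq:dimF}.

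For the rank-reduction statement, take nonzero $A\in\ker\calH$. Since $A$ is symmetric and nonzero, $\lambdamin(A) < 0$: indeed $\trace(A) = \trace(\calH\text{-diagonal stuff})$... more precisely, $A\in\ker\calH$ forces $\trace((YAY\transpose)_{kk})=0$ for all $k$, so $\trace(YAY\transpose)=0$; as $YAY\transpose\neq 0$ (by injectivity), it has both positive and negative eigenvalues, and since $YAY\transpose = Y A Y\transpose$ has the same nonzero spectrum as $A$ up to the action of $Y$ — more carefully, $YAY\transpose$ and the restriction of $A$ to the relevant subspace share signature — we get $\lambdamin(A)<0$. Then set $t = -1/\lambdamin(A) > 0$ and $X' = Y(I_p + tA)Y\transpose$. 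By construction $I_p + tA \succeq 0$ with $\lambdamin(I_p+tA) = 1 + t\lambdamin(A) = 0$, so $I_p+tA$ is rank-deficient, giving $\rank(X') = \rank(Y(I_p+tA)Y\transpose) \leq p-1$ (using $Y$ full rank). And since $A\in\ker\calL_X$ and $I_p+tA\succeq 0$, formula~\eqref{eq:calF} shows $X'\in\calF_X$; rank-deficiency of $I_p+tA$ places $X'$ on the boundary of $\calF_X$ rather than its relative interior.

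The main obstacle I expect is the clean bookkeeping in the claim $\lambdamin(A)<0$ and the rank drop: one must be careful that $YAY\transpose$ and $A$ have the same signature so that "$\trace(YAY\transpose)=0$ and $YAY\transpose\neq0$" genuinely forces $A$ to be indefinite, and that $\rank(Y(I_p+tA)Y\transpose) = \rank(I_p+tA)$ exactly because $Y$ has full column rank $p$ — hence left multiplication by $Y$ and right multiplication by $Y\transpose$ are injective on the column and row spaces. Everything else is a direct unwinding of the definitions of $\sbdop$, $\calL_X$, and $\calF_X$ already established in Section~\ref{sec:geometryconvex}.
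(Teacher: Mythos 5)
Your proof is correct and takes essentially the paper's route: your computation $\inner{A}{\calH(A)} = \sum_k\frobnormsmall{Y_kAY_k\transpose}^2 = \frobnormsmall{\calL_X(A)}^2$ is exactly the identity $\calH = \calL_X^*\calL_X$ that the paper's one-line proof invokes, and the remaining bookkeeping (injectivity of $A\mapsto YAY\transpose$ for full-column-rank $Y$, and $1+t\lambdamin(A)=0$ for $t=-1/\lambdamin(A)$) is a correct unpacking of what the paper leaves implicit. One stylistic note: your argument that $\lambdamin(A)<0$ works but can be shortened — if $A\succeq 0$ then $YAY\transpose\succeq 0$, and $\trace(YAY\transpose)=0$ would force $YAY\transpose=0$, hence $A=0$ by full column rank of $Y$, contradicting $A\neq 0$; this avoids invoking the signature-preservation fact.
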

\begin{proof}
	Recall the definitions of $\calF_X$~\eqref{eq:calF} and $\calL_X$~\eqref{eq:calL}. All follows from $\calH = \calL_X^* \calL_X^{}$, where $\calL_X^*$ is the adjoint of $\calL_X$.
\end{proof}
The latter proposition suggests an explicit numerical method to compute $A$, by computing a minimal eigenvector of $\calH$. Applying $\calH$ costs $\calO(m(d^2p + p^2d))$ flops. Assuming $p = \floorr{p^*}+1 = \Theta(d\sqrt{m})$ and that up to $p(p+1)/2$ applications are necessary, this brings the cost of computing $A$ to $\calO(d^2 n^3)$ flops. %

%
%
%
%
%
%
%
%
%
%

%

%
%
%
%
%	%
%
%

%
%
%
%
%
%
%%
%
%
%
%
%
%

\section{The Riemannian staircase algorithm} \label{sec:algorithm}

The above results suggest a simple algorithm to compute KKT points of~\eqref{eq:P}:  for some small value of $p \geq d+1$, find a second-order critical point $Y$ of~\eqref{eq:RP}. If $Y$ is rank deficient, then Theorem~\ref{thm:rankdeficientY} guarantees $X = YY\transpose$ is KKT for~\eqref{eq:P}. Otherwise, increase $p$ and find a second-order critical point of~$(\text{RP}_{p_+})$, possibly warm-starting as suggested by Corollary~\ref{cor:escapefullrank}. Iterating this procedure, the worst-case scenario is when $p$ increases all the way to $n$, in which case any second-order critical point of~$(\text{RP}_n)$ yields a KKT point of~\eqref{eq:P}, as per Theorem~\ref{thm:squareY}. Specific results pertaining to classes of functions $f$ limit how large $p$ could grow. We call this the \emph{Riemannian Staircase}, listed as Algorithm~\ref{algo:staircase}. Of course, the hope is that the algorithm returns for some small $p$, and in practice we find that it is often sufficient to take $p$ just above the rank of a solution.

\begin{algorithm}[t]
	\caption{Riemannian Staircase Algorithm}
	\label{algo:staircase}
	\begin{algorithmic}[1]
		\State \textbf{Input:} Integers $d < p_1 < p_2 < \cdots < p_k \leq n$; an initial iterate $Y_0 \in \St(d, p_1)^m$.
		\For {$i = 1 \ldots k$ }
		\State $Y_i \leftarrow \Call{RiemannianOptimization}{\St(d, p_i)^m, g, Y_{i-1}}$ \Comment{Descent to 2nd order critical}
		\If {$i = k$ \textbf{or} $\rank(Y_i) < p_i$}
		\State \textbf{return $Y_i$} \Comment{Theorems~\ref{thm:rankdeficientY} and~\ref{thm:squareY}}
		\Else
		\State $Y_i \leftarrow \begin{pmatrix} Y_i & 0_{n\times (p_{i+1}-p_i)} \end{pmatrix}$ \Comment{Augment $Y_i$ for the next rank}
		\State $Z \leftarrow \Call{EscapeDirection}{\St(d, p_{i+1})^m, g, Y_i}$ \Comment{Corollary~\ref{cor:escapefullrank} + line-search}
		\State $Y_i \leftarrow \Retr_{Y_i}(Z)$ \Comment{Eq.~\eqref{eq:retraction}}
		\EndIf
		\EndFor
	\end{algorithmic}
\end{algorithm}

Algorithm~\ref{algo:staircase} assumes availability of a procedure $\Call{RiemannianOptimization}{\M, g, Y_0}$, which returns a second-order critical point of $g \colon \M \to \reals$, with cost at most $g(Y_0)$. This assumption is discussed below.

Inside the \emph{else}-block, the augmented $Y_i$ (with additional columns of zeros) is (usually) a saddle point. Although the second-order procedure $\Call{RiemannianOptimization}{}$ should be able to escape it, we make this step explicit via the procedure $\Call{EscapeDirection}{}$. The latter can be implemented using Corollary~\ref{cor:escapefullrank}, which indicates how computing an eigenvector of $S$~\eqref{eq:S} associated to its smallest eigenvalue, combined with a line-search, allows to escape the saddle with strict cost decrease (unless that eigenvalue is nonnegative, in which case $Z = 0$ and $Y_i$ is returned with $Y_i^{} Y_i\transpose$ being KKT). %

For all sufficiently smooth $f$,
taking $p_k = n$ guarantees Algorithm~\ref{algo:staircase} returns $Y$ such that $YY\transpose$ is a KKT point. For convex $f$, KKT points may have arbitrary rank, so that allowing large $p_k$ seems necessary in general. For strongly concave $f$, it is sufficient to take $p_k = \floor{p^*} + 1$ (Corollary~\ref{cor:concaveffacerestriction}) ; for concave (and linear) $f$, it is sufficient to take $p_k = \floor{\frac{d+1}{d+3}n} + 1$ (Corollary~\ref{cor:sufficientp}), and it is expected that $p_k = \floor{p^*} + 1$ should be sufficient (Corollary~\ref{cor:linearfconstantface} and discussion).

In the latter case, in the unlikely event that Algorithm~\ref{algo:staircase} terminates with $Y$ of size $n \times p_k$, $p_k \geq \floor{p^*} + 1$, full-rank and second-order critical such that $X = YY\transpose$ is \emph{not} a KKT point of~\eqref{eq:P}, it is possible to further optimize without increasing the rank. Indeed, since $\dim \calF_X > 0$, Proposition~\ref{prop:facerankreduction} shows how to compute $Y'$ such that $X' = Y'(Y')\transpose$ is on the boundary of $\calF_X$. Since $f$ is concave, $f(X') \leq f(X)$ (Lemma~\ref{lem:faces}). $Y'$ is critical and rank-deficient. If $Y'$ is second-order critical, $X'$ is KKT. Otherwise, Theorem~\ref{thm:escaperankdeficient} shows how to escape with a strict cost decrease. Iterating this procedure as needed, the cost decreases strictly (no cycling), and the rank $p$ never exceeds $p_k$. We expect this procedure to terminate since~\eqref{eq:P} admits KKT points of rank at most $\floorr{p^*}$, but we do not prove this. %

In practice, for the $\Call{RiemannianOptimization}{}$ procedure, we use the Riemannian trust-region method (RTR)~\citep{genrtr}, through the Manopt toolbox~\citep{manopt}. RTR is a descent method. It converges toward critical points regardless of the initial iterate (global convergence).\footnote{If the local optimizers of $g$ were isolated, we could also guarantee local convergence at a quadratic rate, but $g(Y) = g(YQ)$ for all orthogonal $Q$, so this is never the case. In practice though, we do observe a characteristically superlinear convergence.
} Furthermore, the stable fixed points of RTR are local optimizers, thus making convergence to points which are not second-order critical unlikely (but not impossible). Should this happen, Theorem~\ref{thm:escaperankdeficient} shows how to escape. Admittedly, it is unclear how many times this might have to be repeated in the worst case.

%
%As there are no worst-case complexity analyses of Riemannian optimization algorithms, we are unable to state the worst-case complexity of this step, and hence of the overall algorithm. %
%

Ideally, one would modify the RTR algorithm itself to ensure global convergence to second-order critical points. To the best of our knowledge, algorithms with such properties have not yet been described in the Riemannian setting. Nevertheless, we are hopeful that this should be possible, in the light of recent work by \citet{cartis2012complexity}. These authors indeed describe a modification of the classical trust-region method
and guarantee polynomial-time convergence to approximate second-order critical points.
Encouragingly, \citet{sun2015complete} achieved a strong result in this vein for dictionary learning with RTR on a sphere, hinting to a possible generalization on manifolds.

RTR terminates once the norm of $\grad\,g$ drops below a certain threshold. Thus, the returned $Y$ is not exactly a critical point, and as a result it is not, in general, exactly rank deficient even when it should be.
Numerically,
we declare rank-deficiency when the condition number of $Y\transpose Y^{}$ exceeds some large threshold (say, $10^{10}$).

If a solution of rank $q$ is sought but the obtained solution $Y_p\in\Stdpm$ has rank $p > q$, one heuristic is to project $Y_p$ to $\St(d,q)^m$ with any reasonable algorithm (call it $Y_q$)---for example, compute the thin SVD of $Y_p = U\Sigma V\transpose$, retain only the first $q$ columns of $U\Sigma$ and orthonormalize each $d\times q$ slice. Then, run $\Call{RiemannianOptimization}{\St(d,q)^m, g, Y_q}$. This typically returns a local optimizer of the hard problem. Experience shows the detour via the higher dimensional relaxation may help avoid bad local traps.

\section{Special case: linear cost function} \label{sec:linear}

In the important special case where $f$ is a linear function
$f(X) = \innersmall{C}{X}$ for some data matrix $C\in\Snn$,
the convex problem \eqref{eq:P} is an SDP. As per Remark~\ref{rem:linearlargepiseasy}, it is equivalent to~\eqref{eq:RP} as soon as $p \geq p^*$.
Remarkably, for any $X\in\calC$, we obtain a lower-bound on the optimal value of the SDP, following an idea from~\citet[\S\,6.1]{burer2005local}.
\begin{proposition}[bounds on the SDP value]	\label{prop:lowerboundcostlinear}
	Let $f(X) = \innersmall{C}{X}$ be linear and let
	$f^*$ denote the optimal value of~\eqref{eq:P}. Then, for all $X\in\calC$,
	\begin{align*}
	f(X) + n\cdot\lambdamin(S(X)) \quad \leq \quad f^* \quad \leq \quad f(X).
	\end{align*}
\end{proposition}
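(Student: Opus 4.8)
\emph{Proof plan.} The upper bound is immediate: any $X\in\calC$ is feasible for~\eqref{eq:P}, so $f^* \leq f(X)$. The content is the lower bound, and the plan is to express the optimality gap $f^* - f(X)$ purely as an inner product of the dual matrix $S = S(X) = C - \sbd{CX}$ (specialize~\eqref{eq:S} to $\nabla f = C$) against an optimizer of~\eqref{eq:P}, then bound that inner product below using positive semidefiniteness. This follows the route indicated by~\citet[\S\,6.1]{burer2005local}.

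First I would fix $X^*\in\calC$ attaining $f^* = \innersmall{C}{X^*}$, write $C = S + \sbd{CX}$, and compute
\begin{align*}
f^* - f(X) = \innersmall{C}{X^* - X} = \innersmall{S}{X^* - X} + \innersmall{\sbd{CX}}{X^* - X}.
\end{align*}
The second term vanishes: $\sbd{CX}$ is block-diagonal while $X^* - X$ has zero diagonal blocks, since both $X^*$ and $X$ lie in $\calC$ and hence have diagonal blocks $I_d$. For the first term I would use the identity $\innersmall{S}{X} = \trace(SX) = 0$, which is noted just below~\eqref{eq:S} and holds for \emph{every} feasible $X$ (because $\trace(\sbd{CX}\,X) = \sum_i \trace\big(\symm{(CX)_{ii}}\big) = \sum_i \trace\big((CX)_{ii}\big) = \trace(CX)$, using $X_{ii} = I_d$). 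This yields $f^* - f(X) = \innersmall{S}{X^*}$.

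The remaining step is to lower-bound $\innersmall{S}{X^*} = \trace(SX^*)$. Since $X^* \succeq 0$, I would diagonalize $X^* = \sum_k \lambda_k v_k v_k\transpose$ with $\lambda_k \geq 0$ and $\{v_k\}$ orthonormal, so that $\trace(SX^*) = \sum_k \lambda_k\, v_k\transpose S v_k \geq \lambdamin(S)\sum_k \lambda_k = \lambdamin(S)\,\trace(X^*)$; and $\trace(X^*) = \sum_{i=1}^m \trace(X^*_{ii}) = md = n$. Combining gives $f^* - f(X) \geq n\,\lambdamin(S)$, as claimed.

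I do not expect a genuine obstacle. The only care needed is the block-diagonal/trace bookkeeping that kills $\innersmall{\sbd{CX}}{X^* - X}$ and $\innersmall{S}{X}$, which rely specifically on the constraint $X_{ii} = I_d$; and the observation that $\trace(SX^*) \geq \lambdamin(S)\,\trace(X^*)$ needs no sign hypothesis on $\lambdamin(S)$, so the bound is unconditionally valid (when $\lambdamin(S) \geq 0$ it is consistent with Theorem~\ref{thm:KKTS}, which then forces $X$ optimal and, via $SX=0$ with $X\neq 0$, $\lambdamin(S)=0$).
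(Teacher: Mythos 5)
Your argument is correct, and it reaches the same inequality by a more elementary route than the paper. The paper's proof goes through SDP duality: it writes down the dual of~\eqref{eq:P}, exhibits $\tilde S = S - \lambdamin(S)\, I_n$ as a feasible dual point (so that $C - \tilde S = \sbd{CX} + \lambdamin(S) I_n$ is block-diagonal and $\tilde S \succeq 0$), and reads off the bound $\trace(C - \tilde S) = f(X) + n\lambdamin(S) \leq f^*$ from weak duality, invoking Slater's condition for strong duality. You instead carry out the weak-duality computation directly on the primal side: decompose $C = S + \sbd{CX}$, use that diagonal blocks of $X^* - X$ vanish to kill the $\sbd{CX}$ term, use $\trace(SX) = 0$ (built into the definition of $S$) and $\trace(X^*) = n$, and then bound $\trace(SX^*) \geq \lambdamin(S)\trace(X^*)$ via $S - \lambdamin(S)I_n \succeq 0$. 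The two arguments are really the same certificate viewed from opposite sides; what your version buys is that it never needs to state the dual problem or appeal to Slater/strong duality, so it is self-contained and transparently valid with only weak duality-type reasoning. One small stylistic remark: the paper's phrasing \dblquote{follows by strong duality} is slightly more than is needed for the lower bound (weak duality suffices, as your computation makes explicit); the appeal to Slater is really there to justify that the stated SDP is the correct dual and that the bound can in principle be tight. Your closing consistency check with Theorem~\ref{thm:KKTS} is a nice sanity check but not required for the proof.
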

\begin{proof}
	The dual of~\eqref{eq:P} is the following SDP:
	\begin{align*}
	\max\ \trace(C - \tilde S), \textrm{ s.t. } C - \tilde S \textrm{ is symmetric, block-diagonal, and }\tilde S \succeq 0.
	\end{align*}
	The matrix $\tilde S = S - \lambdamin(S)\cdot I_n$ for $S = S(X)$~\eqref{eq:S} is admissible. The result follows by strong duality, owing to Slater's condition.
\end{proof}
Algorithm~\ref{algo:staircase} solves
the SDP by optimizing $g$ in~\eqref{eq:RP}, whose differentials are:
\begin{align*}
	g(Y) & = \trace(Y\transpose C Y), & \nabla g(Y) & = 2CY, & \nabla^2 g(Y)[\dot Y] & = 2C\dot Y.
\end{align*}

As an illustrative example, we here apply Algorithm~\ref{algo:staircase} and competing SDP solvers to random instances of the orthogonal synchronization problem~\citep{bandeira2013approximating}. In this setting, one wishes to estimate $m$ orthogonal matrices $Q_1, \ldots, Q_m$, based on noisy measurements of the relative transformations $Q_i^{} Q_j\transpose$. See the introduction for applications.

In this benchmark, for increasing values of $m$, target matrices of size $d = 3$ are generated uniformly at random. The measurements of relative rotations are $H_{ij} = Q_i^{}Q_j\transpose + \sigma N_{ij}$ ($i < j$), where $\sigma = 0.3$ is the noise level and the $N_{ij}$'s are independent random noise matrices with i.i.d.\ normal entries. We also set $H_{ji} = H_{ij}\transpose$ and $H_{ii} = I_d$. To estimate the $Q_i$'s from the $H_{ij}$'s, we set $C = -H/(nm)$ and solve~\eqref{eq:P}. If the solution has rank $d$, this is equivalent to solving the maximum likelihood problem:
\begin{align*}
	\min_{Q_1, \ldots, Q_m \in \Od} \quad \sum_{i , j} \sqfrobnormbig{H_{ij}^{} - Q_i^{}Q_j\transpose}.
\end{align*}

Remarkably, for all instances generated, \eqref{eq:P} admits a rank $d$ solution, thus revealing the true maximum likelihood estimator: a hard quantity to compute, in general. This serendipitous phenomenon is partly explained in~\citep{bandeira2014tightness}.

Figure~\ref{fig:linear} shows how much time it takes various solvers to find this solution of rank $d$ (they all do). Algorithm~\ref{algo:staircase} runs RTR once on~\eqref{eq:RP} with $p = d+1$, with a random initial guess, and returns with an optimal rank $d$ solution.
We compare against interior point methods SeDuMi~\citep{sedumi}, SDPT3~\citep{sdpt3} and Mosek~\citep{mosek} (the latter two via CVX~\citep{cvx}) as well as against SDPLR~\citep{sdplr} with and without forcing the search rank to $d+1$ (the forced version is labeled SDPLR*). We also depict how much time it takes to simply compute the top $d$ eigenvectors of $H$, which, after projection, reveal an (empirically) equally good estimator for this problem, but with weaker guarantees~\citep{singer2010angular,bandeira2013approximating}.

\begin{figure}
\centering
\begin{tikzpicture}
\draw[draw=none,shape=rectangle]
    (0,0) node {\includegraphics[width=\linewidth]{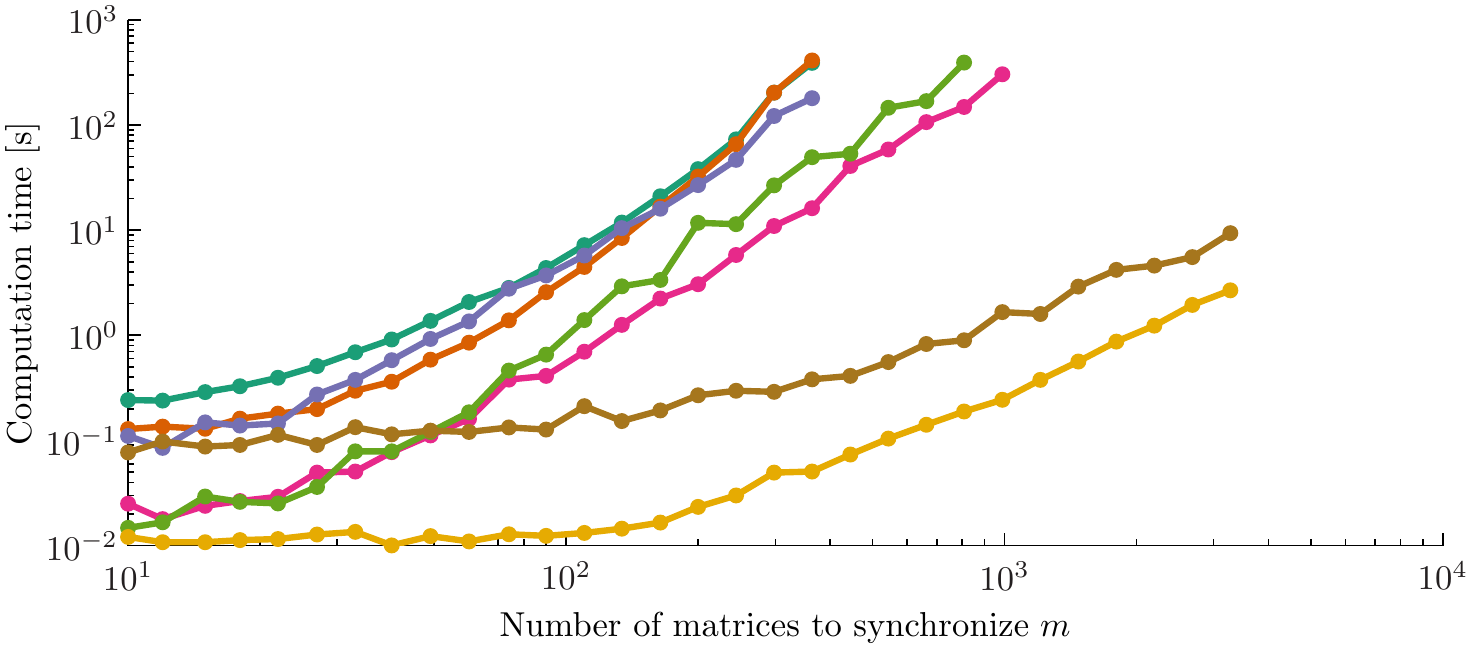}};
\draw[draw=none,shape=rectangle] (-4.5,-.0)  node[mycolor1] {SDPT3};
\draw[draw=none,shape=rectangle] (-.28,2.6)  node[mycolor2] {Mosek};
\draw[draw=none,shape=rectangle] (1.5,2.6) node[mycolor3] {SeDuMi};
\draw[draw=none,shape=rectangle] (3.6,2.6) node[mycolor4] {SDPLR};
\draw[draw=none,shape=rectangle] (3.2,3.0)   node[mycolor5] {SDPLR*};
\draw[draw=none,shape=rectangle,anchor=west] (5.3,1.1) node[mycolor6] {Staircase};
\draw[draw=none,shape=rectangle,anchor=west] (5.3,0.5) node[mycolor7] {EIG};
\end{tikzpicture}
\vspace{-3mm}
\caption{All methods solve~\protect\eqref{eq:P} on problems from Section~\protect\ref{sec:linear}. The proposed staircase algorithm is the only one to return a solution which satisfies the constraints up to machine precision. It also returns the solution $Y$ which is numerically closest to be of rank $d$. Its computational cost seems to grow at the same rate as that of merely computing $d$ dominant eigenvectors of the data matrix (EIG), thus outperforming interior point methods as well as SDPLR.} %
\label{fig:linear}
\end{figure}

\section{Special case: Pseudo-Huber loss cost function} \label{sec:huber}

In the previous section, orthogonal synchronization is considered with Gaussian noise on the relative measurements. Maximum likelihood estimation then naturally leads to the minimization of a quadratic cost in $Y$, which simplifies to a linear cost in $X$.

When the relative measurements $H_{ij}$ include outliers, least-squares are not expected to perform well. As an alternative, \citet{wang2012LUD} minimize a sum of \emph{unsquared} errors, that is, they estimate the orthogonal matrices $Q_i$ as the minimizers of $\sum_{i,j} \smallfrobnorm{H_{ij}^{} - Q_i^{}Q_j\transpose}$.
A convex relaxation akin to the one from the previous section leads to solving~\eqref{eq:P} with the \emph{least unsquared deviations} cost $f(X) = \sum_{i,j} \frobnorm{H_{ij} - X_{ij}}$ (LUD). This is similar in spirit to the convex relaxation for robust subspace estimation presented in~\citep{lerman2014robust}. The authors show that rounding the solutions of this convex program yields a good estimator, even if up to a (random) half of the data is random. In that regime, if the non-outliers are noiseless, solving the convex program achieves perfect recovery with high probability. They solve the problem with an alternating direction augmented Lagrangian method (ADM).

Tools in this paper do not directly apply to the LUD cost, because it is nonsmooth.\footnote{Recent work on nonsmooth optimization on manifolds~\citep{kovnatsky2015MADMM} may prove useful in this regard.} Unfortunately, in our experiments we also found that smoothing the LUD cost typically leads to higher rank solutions, at a significant computational premium. We formalize this observation in the following theorem. The assumptions on $H$ are not restrictive: they require just the slightest inconsistency in the measurements.
\begin{theorem}[smoothing the LUD cost suppresses rank $d$ solutions]
Let $\ell \colon \mathbb{R}^+ \to \mathbb{R}^+$ be an increasing function (with $\ell'(x) > 0$ if $x > 0$) such that $f \colon \Snn \to \mathbb{R}$ defined by $f(X) = \sum_{i,j} \ell(\frobnorm{X_{ij} - H_{ij}})$ is twice continuously differentiable, where each $H_{ij}^{} = H_{ji}\transpose$
verifies $\opnormsmall{H_{ij}} \leq 1$ (which includes orthogonal matrices),
and $H_{ii} = I_d$. %
If $H$ is not a rank-$d$ matrix in $\calC$, then all KKT points of~\eqref{eq:P}
have rank strictly larger than $d$.
(Otherwise, $X = H$ is the unique KKT point.)
\end{theorem}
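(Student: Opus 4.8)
The plan is to use the dual certificate characterization from Theorem~\ref{thm:KKTS}: a candidate $X\in\calC$ is KKT if and only if $S(X)\succeq 0$, where $S(X) = \nabla f(X) - \sbd{\nabla f(X)X}$. I will argue contrapositively: suppose $X\in\calC$ has rank $d$; I want to show that $S(X)$ fails to be positive semidefinite unless $X = H$ and $H$ itself is the rank-$d$ element of $\calC$. First I would compute $\nabla f(X)$ explicitly. Since $f(X) = \sum_{i,j}\ell(\frobnorm{X_{ij} - H_{ij}})$, the gradient has blocks $(\nabla f(X))_{ij} = \frac{\ell'(\frobnorm{X_{ij}-H_{ij}})}{\frobnorm{X_{ij}-H_{ij}}}(X_{ij} - H_{ij})$ whenever $X_{ij}\neq H_{ij}$, with the understanding that the $(i,i)$ blocks vanish since $X_{ii} = I_d = H_{ii}$. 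Write $w_{ij} = \ell'(\frobnorm{X_{ij}-H_{ij}})/\frobnorm{X_{ij}-H_{ij}} \geq 0$, so $(\nabla f(X))_{ij} = w_{ij}(X_{ij}-H_{ij})$, and $w_{ij} > 0$ exactly when $X_{ij}\neq H_{ij}$.

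The key structural fact I would exploit is Proposition~\ref{prop:Xijconvhull}: because $\rank(X) = d$, every off-diagonal block $X_{ij}$ is \emph{orthogonal}, and moreover $X = YY\transpose$ where each slice $Y_i\in\Od$, with $X_{ij} = Y_i^{}Y_j\transpose$. The natural thing is to certify that $S(X)\not\succeq 0$ by exhibiting a vector $v$ with $v\transpose S(X) v < 0$, using a vector adapted to the kernel of $X$. Since $SX = 0$ holds automatically for any critical point, but here I cannot assume $X$ is critical, I will instead test $S(X)$ against the columns of $Y$ themselves, or rather against block-structured vectors of the form $v = (Y_1\transpose a, \ldots, Y_m\transpose a)\transpose$ for $a\in\Rd$ — equivalently, I would evaluate $Y\transpose S(X) Y$ and show it has a negative eigenvalue. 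A computation with $SX = 0$-type algebra: $Y_i^{}(S(X))_{ij}Y_j\transpose = w_{ij}Y_i(X_{ij} - H_{ij})Y_j\transpose = w_{ij}(I_d - Y_i H_{ij} Y_j\transpose)$, and since $\opnormsmall{H_{ij}}\leq 1$ forces $\opnormsmall{Y_iH_{ij}Y_j\transpose}\leq 1$, the symmetric part of $I_d - Y_iH_{ij}Y_j\transpose$ is positive semidefinite. Similarly the diagonal correction $-\sbd{\nabla f(X)X}$ must be folded in; its $(i,i)$ block is $-\symm{\sum_{j}(\nabla f(X))_{ij}X_{ji}} = -\symm{\sum_j w_{ij}(X_{ij}-H_{ij})Y_j^{}Y_i\transpose}$, so after conjugating by $Y_i$ it becomes $-\symm{\sum_j w_{ij}(I_d - Y_iH_{ij}Y_j\transpose)}$. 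Thus $\tilde S := \diag(Y_1,\ldots,Y_m)\transpose S(X)\diag(Y_1,\ldots,Y_m)$ is a \emph{graph-Laplacian-like} matrix with off-diagonal blocks $w_{ij}(I_d - \symm{Y_iH_{ij}Y_j\transpose}$, roughly$)$ — more precisely its off-diagonal blocks are $-w_{ij}\symm{Y_iH_{ij}Y_j\transpose}$ plus a contribution, and its diagonal blocks sum the positive semidefinite matrices $w_{ij}\symm{I_d - Y_iH_{ij}Y_j\transpose}$. The all-ones test vector $(a,a,\ldots,a)$ makes the Laplacian structure collapse, and $\tilde S$ is positive semidefinite on that vector iff the relevant terms cancel, which happens iff $w_{ij}(I_d - Y_iH_{ij}Y_j\transpose) = 0$ for all $i,j$, i.e.\ iff $X = H$.

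The main obstacle — and the place I would spend the most care — is handling the possibility that $X$ is \emph{not} a critical point, so I cannot cleanly use $SX = 0$ to simplify. The honest route is: if $X$ is KKT then in particular $S(X)\succeq 0$ and $S(X)X = 0$ by Theorem~\ref{thm:KKTS}, so the columns of $Y$ lie in $\ker S(X)$; this forces, block by block, $\sum_j w_{ij}(I_d - Y_iH_{ij}Y_j\transpose) = 0$ after symmetrization, but since each summand's symmetric part is PSD (from $\opnormsmall{H_{ij}}\le 1$), every summand must vanish, hence $w_{ij}\symm{I_d - Y_iH_{ij}Y_j\transpose} = 0$ for all $i,j$. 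From $w_{ij}\geq 0$ with $w_{ij}>0$ iff $X_{ij}\neq H_{ij}$, and the fact that $\symm{I_d - Y_iH_{ij}Y_j\transpose} = 0$ together with $\opnorm{\cdot}\leq 1$ forces $Y_iH_{ij}Y_j\transpose$ to be orthogonal and equal to $I_d$ (a matrix with operator norm $\le 1$ whose symmetric part is $0$... actually one needs $\symm{M}=I_d$ not $0$; I would redo this bookkeeping carefully — the correct identity is that the diagonal block of $\tilde S$ is $\sum_{j\ne i} w_{ij}(I_d - \symm{Y_iH_{ij}Y_j\transpose})$ and the off-diagonal is $-w_{ij}Y_iH_{ij}Y_j\transpose$ symmetrized, so testing against $(a,\ldots,a)$ gives $\sum_{i}\sum_{j\ne i}w_{ij}\,a\transpose(I_d - \symm{Y_iH_{ij}Y_j\transpose})a \ge 0$ with equality forcing each $w_{ij}(I_d-\symm{Y_iH_{ij}Y_j\transpose})=0$). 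Then $w_{ij}>0 \Rightarrow \symm{Y_iH_{ij}Y_j\transpose} = I_d \Rightarrow Y_iH_{ij}Y_j\transpose = I_d$ (since its symmetric part is the identity and its operator norm is at most one) $\Rightarrow H_{ij} = Y_i\transpose Y_j^{} = X_{ij}$, contradicting $w_{ij}>0$. Hence $w_{ij} = 0$ for all $i,j$, so $X = H$, which then must be the rank-$d$ element of $\calC$; and in that degenerate case $\nabla f(H) = 0$, so $S(H) = 0 \succeq 0$ and $H$ is indeed KKT, and by strict convexity-type arguments (or directly, since $f\ge 0$ with $f(H)=0$) it is the unique KKT point. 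This finishes the contrapositive: if $H$ is not a rank-$d$ point of $\calC$, no rank-$d$ $X$ can be KKT.
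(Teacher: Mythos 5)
Your plan is the right one---reduce KKT to $S(X)\succeq 0$ via Theorem~\ref{thm:KKTS}, compute $\nabla f(X)_{ij} = w_{ij}(X_{ij}-H_{ij})$, exploit that rank-$d$ forces each $X_{ij}$ to be orthogonal (Proposition~\ref{prop:Xijconvhull}), and use $\opnormsmall{H_{ij}}\le 1$---but the middle of the argument has two genuine errors, and the ``Laplacian collapse'' step does not go through.

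First, a sign error. Compute $S_{ii}$ directly: since $\nabla f(X)_{ii}=0$ and $X_{ij}X_{ji}=I_d$, one finds $S_{ii} = -\symm{\sum_{j\neq i} w_{ij}(I_d - H_{ij}X_{ij}\transpose)} = \sum_{j\neq i} w_{ij}\,\symm{H_{ij}X_{ij}\transpose - I_d}$, and since $\opnormsmall{H_{ij}X_{ij}\transpose}\le 1$, each summand is \emph{negative} semidefinite, not positive semidefinite. Your displayed diagonal block has the opposite sign and is conjugated by $(Y_i,Y_j\transpose)$ rather than $(Y_i\transpose,Y_j)$; after fixing both, the diagonal block of $\tilde S = \diag(Y_i)\transpose S\diag(Y_i)$ is $-\sum_{j\neq i} w_{ij}\bigl(I_d - \symm{Y_i\transpose H_{ij}Y_j}\bigr) \preceq 0$, not $\succeq 0$.

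Second, and more structurally, the test against the all-ones vector $(a,\ldots,a)$ does not force the individual terms to vanish. That test is $(Ya)\transpose S (Ya)$, and it equals zero \emph{identically} for any critical point, simply because $SX=0$ and $Y$ full-rank give $SY=0$---not because a sum of signed terms is pinched at zero. Indeed, if you expand $\sum_{i,j}a\transpose\tilde S_{ij}a$ carefully, the diagonal contribution $-\sum_{i\neq j}w_{ij}a\transpose(I_d-\symm{Y_i\transpose H_{ij}Y_j})a$ and the off-diagonal contribution $+\sum_{i\neq j}w_{ij}a\transpose(I_d-\symm{Y_i\transpose H_{ij}Y_j})a$ cancel term by term, so the test carries no information. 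Similarly, the derivation ``columns of $Y$ in $\ker S$ implies $\sum_j w_{ij}(I_d - Y_iH_{ij}Y_j\transpose)=0$ after symmetrization'' does not follow: left-multiplying the $i$th block of $SY=0$ by $Y_i\transpose$ yields $\sum_j\tilde S_{ij}=0$, which after simplification is a statement about the \emph{skew} part of $\sum_j w_{ij}Y_i\transpose H_{ij}Y_j$, not its symmetric part.

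The repair is shorter than what you wrote and bypasses both the conjugation and the test vector. From $S\succeq 0$ alone, every principal block satisfies $S_{ii}\succeq 0$. The explicit formula above exhibits $S_{ii}$ as a sum of NSD matrices, so $S_{ii}=0$ for every $i$. A PSD matrix with all diagonal blocks zero is zero (Schur complement), so $S=0$; in particular the off-diagonal blocks $S_{ij}=\nabla f(X)_{ij}=w_{ij}(X_{ij}-H_{ij})$ vanish, which forces $X_{ij}=H_{ij}$ for all $i,j$, i.e., $X=H$---contradicting the hypothesis that $H$ is not a rank-$d$ element of $\calC$. Your concluding paragraph gets to the right place, but only after discarding the Laplacian/test-vector machinery; lead with $S_{ii}\succeq 0$ and the NSD sum, and the rest falls out.
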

\begin{proof}
The gradient of $f$ with respect to $X_{ij}$ is given by $\nabla f(X)_{ij} = w_{ij}(X_{ij} - H_{ij})$, with $w_{ij} = w_{ji} = \ell'(\frobnorm{X_{ij} - H_{ij}})/\frobnorm{X_{ij} - H_{ij}} > 0$ if $X_{ij} \neq H_{ij}$, and $w_{ij} = 0$ otherwise. This is well defined by assumption.
For contradiction, assume $X$ is a KKT point of~\eqref{eq:P} and $\rank(X) = d$. By Theorem~\ref{thm:KKTS}, $S$~\eqref{eq:S} is positive semidefinite. In particular, its diagonal blocks are positive semidefinite:
\begin{align*}
	S_{ii} & = \nabla f(X)_{ii} - \symmop\Big(\sum\nolimits_{j} \nabla f(X)_{ij} X_{ji}\Big) \\
		   & = \sum\nolimits_{j\neq i} w_{ij} \symm{H_{ij}^{}X_{ij}\transpose - X_{ij}^{}X_{ij}\transpose} \\
		   & = \sum\nolimits_{j\neq i} w_{ij} \symm{H_{ij}^{}X_{ij}\transpose - I_d} \succeq 0.
\end{align*}
The last equality follows from the fact that, since $\rank(X) = d$, each $X_{ij}$ is orthogonal (Proposition~\ref{prop:Xijconvhull}). Since $\opnormsmall{H_{ij}} \leq 1$, each term $\symmsmall{H_{ij}^{}X_{ij}\transpose - I_d}$ is \emph{negative} semidefinite. Thus, the $S_{ii}$'s are zero (simultaneously positive and negative semidefinite), showing that $S = 0$ (by Schur's complement). Hence, the off-diagonal blocks are zero too: $S_{ij} = \nabla f(X)_{ij} = w_{ij}(X_{ij} - H_{ij}) = 0$, implying $X = H$: a contradiction.
\end{proof}
\begin{remark}
The latter theorem applies in particular for $\ell(x) = x^2$. Thus, the convex problem~\eqref{eq:P} with $f(X) = \sqfrobnorm{X-H}$ is not expected to admit rank $d$ solutions in the presence of even the smallest noise. This is in sharp contrast with the linear case, $f(X) = -\Trace(HX)$, even though these two costs differ only by a constant over the rank-$d$ feasible $X$'s. %
The key difference is that the linear cost is also concave, pointing to concavity and nonsmoothness to promote rank-$d$ solutions. %
\end{remark}

In view of these results, we take interest in minimizing the related smoothed cost:
\begin{align}
	g(Y) & = f(YY\transpose) = \sum_{i,j} \ell_\varepsilon\left(\frobnorm{H_{ij}Y_j - Y_i}\right), & \ell_\varepsilon(x) & = %
	\sqrt{x^2 + \varepsilon^2} - \varepsilon \to |x| \textrm{ as } \varepsilon \to 0.
	\label{eq:huber}
\end{align}
Although it bears much resemblance with the convex LUD cost (they coincide when $\rank(X) = d$ and $\varepsilon = 0$), this $f$ is \emph{strongly concave} in $X$. Indeed,
$\sqfrobnorm{H_{ij}Y_j - Y_i}$ is affine in $X_{ij}$,
so that $f$ is a sum of square roots of affine functions of $X$, and the terms under the square roots are larger than $\varepsilon^2 > 0$ and smaller than $\varepsilon^2 + \left(\frobnormsmall{H_{ij}} + \sqrt{d}\right)^2$.
The following marks the dependence in $X$ more explicitly:
\begin{align*}
	f(X) & = \sum_{i,j} \sqrt{\sqfrobnormsmall{H_{ij}} + \sqfrobnormsmall{I_d} - 2\inner{H_{ij}}{X_{ij}} + \varepsilon^2} - \varepsilon, \\
	\nabla f(X)_{ij} & = \frac{-1}{\sqrt{\sqfrobnormsmall{H_{ij}} + \sqfrobnormsmall{I_d} - 2\inner{H_{ij}}{X_{ij}} + \varepsilon^2}} H_{ij}.
\end{align*}
The only difference with a smoothed LUD cost is the term $\sqfrobnormsmall{I_d}$ which appears instead of $\sqfrobnormsmall{X_{ij}}$. Considering that the aim is for the $X_{ij}$'s to be orthogonal, which maximizes their norm by Proposition~\ref{prop:rankdmaxnorm}, refraining from minimizing $\sqfrobnormsmall{X_{ij}}$ appears as a good start.

We may still compute a KKT point for~\eqref{eq:P}, but there is no guarantee that such a point will be even a local minimizer anymore.
On the bright side,
Corollary~\ref{cor:concaveffacerestriction} states that, by strong concavity of $f$, all KKT points of~\eqref{eq:P} are extreme points (thus they have rank at most $\floorr{p^*}$~\eqref{eq:patakirank}) and, for $p > p^*$, all second-order critical points of~\eqref{eq:RP} reveal KKT points of~\eqref{eq:P}.
The numerical experiment below shows that, empirically, even for $\varepsilon > 0$, the proposed algorithm typically converges to a rank-$d$ KKT point of excellent quality. %
Furthermore, as $\varepsilon$ is decreased, the quality of the found KKT point increases (with warm-starting).

We now use the proposed robust formulation of orthogonal synchronization to situations where the sought matrices are in fact permutations\footnote{Permutation matrices are binary matrices with exactly one 1 on each row and column. They are orthogonal.} (without modifying the algorithms). Synchronization of permutations notably arises in image association problems in computer vision~\citep{huang2013consistent,pauchauri2014permutation}.

Let the $Q_i$'s be permutations to estimate and let the $H_{ij}$'s be measurements of the relative permutations $Q_i^{}Q_j\transpose$.
A subset of the measurements of a given size is selected uniformly at random and replaced by uniformly random permutations (outliers). The other measurements are correct.
If perfect recovery of the $Q_i$'s is achieved, then the permutations are recovered. %

Figure~\ref{fig:huber} exhibits the perfect recovery phenomenon hinted by~\citet{wang2012LUD}. We say ``hinted'' as the chosen scenario does not exactly fit the assumptions of these authors. Even in the face of many outliers, the true permutations are recovered, showing the applicability of the proposed methods to permutation estimation.

\begin{figure}
	\centering
	\begin{tikzpicture}
	\draw[draw=none,shape=rectangle]
	(0,0) node {\includegraphics[width=\linewidth]{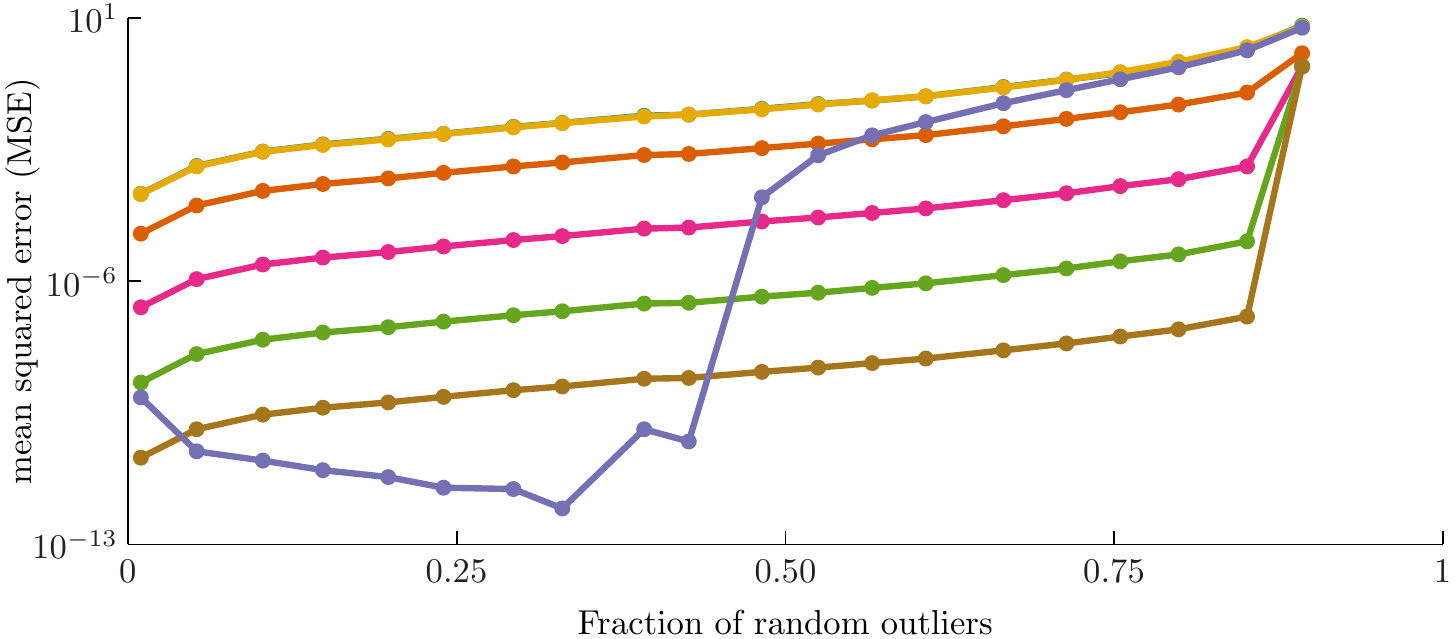}};
	\draw[draw=none,shape=rectangle,anchor=west] (-5,2.4)   node[mycolor1] {Staircase linear cost};
	\draw[draw=none,shape=rectangle,anchor=west] (-1.1,-1.6)   node[mycolor3] {ADM};
	\draw[draw=none,shape=rectangle,anchor=west] (2.6,-.60) node[mycolor6] {$\varepsilon = 10^{-3}$};
	\draw[draw=none,shape=rectangle,anchor=west] (2.6,-1.05) node[mycolor6] {Staircase pseudo-Huber};
	\draw[draw=none,shape=rectangle,anchor=west] (2.6,.25) node[mycolor5] {$\varepsilon = 10^{-2}$};
	\draw[draw=none,shape=rectangle,anchor=west] (2.6,1.00) node[mycolor4] {$\varepsilon = 10^{-1}$};
	\draw[draw=none,shape=rectangle,anchor=west] (2.6,1.75) node[mycolor2] {$\varepsilon = 10^{0}$};
	\draw[draw=none,shape=rectangle,anchor=west] (-1,2.5)     node[mycolor7] {EIG};
	\end{tikzpicture}
	\vspace{-3mm}
	\caption{Synchronization of $m = 100$ permutations of size $d = 6$. As explained in Section~\protect\ref{sec:huber}, for each pair of permutations we are given a relative permutation measurement. Some fraction of those are exact: this varies on the horizontal axis. The other measurements (selected uniformly at random) are uniformly random. When the mean squared error (vertical axis) is close to zero (say, below $10^{-6}$), the estimation is essentially perfect and we get back the true permutations. Remarkably, the staircase method with the pseudo-Huber loss cost~\protect\eqref{eq:huber} can accommodate up to 80\% of outliers and still (empirically) achieve perfect recovery. It is faster and appears more resilient than ADM~\citep{wang2012LUD}, but unfortunately, without access to the ground truth, we cannot claim we found a global optimum because~\eqref{eq:huber} is concave. ADM, on the other hand, comes with guarantees as it solves a convex problem.
	}
	\label{fig:huber}
\end{figure}

In practice, we minimize $f$ for some starting value $\varepsilon = 1$, then re-solve for decreasing values down to $\varepsilon = 10^{-3}$, warm-starting each new solve with the previous solution. The staircase method starts with a search rank $p = d+1$. For up to 80\% outliers, RTR converges to a rank-$d$, second-order critical point of $g$ ($\sigma_{d+1}(Y) \approx 10^{-10}$, $\|\grad\,g(Y)\| \leq 10^{-6}$ and $\lambda_{\textrm{min}}(\operatorname{Hess} g(Y)) \geq -10^{-10}$, after scaling
) without the need to increase $p$, thus rapidly identifying a KKT point of~\eqref{eq:P} which appears to be a global optimizer. %
We compare with ADM~\citep{wang2012LUD} optimizing the LUD cost, but not with the IPM's, as they rapidly run out of memory.

\section{Conclusions and perspectives}

We proposed a novel algorithm to compute KKT points for optimization problems over a class of spectrahedra that come up in relaxations of  various problems involving orthonormal matrices.
Our approach consists in exploiting the smooth geometry of bounded-rank subsets of those spectrahedra, to reduce the problem to Riemannian optimization. This effectively allows one to control how much lifting (dimension increase) is involved in the relaxation. An investigation of both the convex and the Riemannian geometries of the total and the bounded-rank problem showed that, under certain conditions, it is only necessary to compute second-order critical points on a low-dimensional portion of the boundary of the spectrahedron. Numerical experiments confirm the usefulness of this observation.

The present work triggers a number of questions for future investigation.
\begin{itemize}
	\item Which spectrahedra are such that their elements of bounded rank form a smooth manifold? (\citet{journee2010low} cover a number of such sets.) When the search space is of such form with additional constraints, can those be accommodated efficiently? This would be useful to address the SDP's in, e.g.,~\citep{huang2013consistent,saunderson2014semidefinite,chen2014near,bandeira2015nonuniquegames}.
	\item What is the computational complexity of obtaining a second-order critical point of a sufficiently smooth function on a Riemannian manifold, up to a given accuracy? This might be answered by following work in~\citep{cartis2012complexity,sun2015complete}. When $Y$ is only approximately second-order critical and rank deficient, is $YY\transpose$ approximately KKT, in a certain sense? For linear $f$, Proposition~\ref{prop:lowerboundcostlinear} offers a positive answer.
	\item For nonconvex $f$, the set of KKT points includes the local optimizers of~\eqref{eq:P}, as well as a number of uninteresting points. All KKT points give rise to critical points, but not necessarily second-order critical points. Can this be used to improve guarantees? Could we compute second-order KKT points instead, thus possibly excluding even more spurious points? A starting point might be \citep[Thm.\,3.45]{ruszczynski2006nonlinear} and \citep{shapiro1997secondorder}.
	\item Assuming linear $f$, if~\eqref{eq:P} admits a unique solution of rank $r$ (see, e.g., \citep{bandeira2014tightness}), is it sufficient to explore~\eqref{eq:RP} with $p = r+1$? Under the noise model of Section~\ref{sec:linear}, this is observed empirically. Perhaps, this could be investigated via the expected size of the attraction basin of the global optimizers, similarly to~\citep{sun2015complete} in the context of dictionary learning.
	\item Finally, regarding Corollary~\ref{cor:linearfconstantface} and its attached question: for a random linear cost function and $p > p^*$, what is the probability that \eqref{eq:RP} admits second-order critical points which are not global optimizers?
\end{itemize}

\subsubsection*{Acknowledgments}

The author thanks P.-A.~Absil, A.~d'Aspremont, A.~Bandeira, X.~Cheng, B.~Gerencs\'er, Y.~Khoo, B.~Mishra, A.~Singer and B.~Vandereycken for fruitful discussions.
Parts of this research were conducted while N.B.\ was a research fellow with the FNRS in Belgium, and while generously supported by a Research in Paris grant, by the ``Fonds Sp\'eciaux de Recherche'' (FSR) at UCLouvain and by the Chaire Havas ``Chaire Eco\-no\-mie et gestion des nouvelles don\-n\'ees'', the ERC Starting Grant SIPA and a Research in Paris grant in France.

{\small
\bibliographystyle{plainnat}
\bibliography{boumal}
}

\appendix

\section{Tightness for synchronization on a cycle}
\label{sec:cycletight}

Consider synchronization on a cycle: the goal is to estimate orthogonal matrices $R_1, \ldots, R_m \in \Od$ based on one cycle of measurements: $H_{1,2} \approx R_1^{}R_2\transpose, H_{2,3} \approx R_2^{}R_3\transpose, \ldots, H_{m,1} \approx R_m^{}R_1\transpose$. This is achieved by solving~\eqref{eq:RP} with $p=d$ and
$$
g(Y) = \sum_{i=1}^m \sqfrobnormsmall{Y_i^{}Y_{i+1}\transpose - H_{i,i+1}^{}} = \sum_{i=1}^{m} \innersmall{Y_i^{}Y_{i+1}\transpose}{-H_{i,i+1}^{}} + \textrm{constant},
$$
with the indexing convention that $Y_{m+1} \equiv Y_1$ and $H_{m,m+1} \equiv H_{m, 1}$.
Under a permissive condition on the measurements, \citet{sharp2004multiview,peters2014sensor} exhibit an explicit formula for the solution (they restrict their attention to rotation matrices, that is, orthogonal matrices of determinant +1). We show that under that same condition, the corresponding SDP relaxation~\eqref{eq:P} with
\begin{align}
f(X) & = \inner{C}{X}, & C & =
-\begin{pmatrix}
0 & H_{1,2} & & & H_{m,1}\transpose \\
H_{1,2}\transpose & 0 & H_{2,3} & & \\
& H_{2,3}\transpose & \ddots & \ddots & \\
& & \ddots & \ddots & H_{m-1,m} \\
H_{m,1} & & & H_{m-1,m}\transpose & 0
\end{pmatrix}
\label{eq:fcycle}
\end{align}
is tight: there exists a unique solution of rank $d$ which reveals the global optimum.

The proof rests on two key ingredients: (a) we have an explicit formula for the solution $X$ to certify, and (b) we have an explicit formula for the dual certificate $S(X)$ to check. It seems reasonable to expect that the result should carry over to connected graphs whose cycles have disjoint edges.

Recently, \citet{zhang2015disentangling} gave a powerful tightness result for such SDP's: as stated, their result is restricted to cycles of length 3, but it nicely accommodates non-orthogonal blocks in the data matrix $C$.

\begin{theorem}
	Let $H_{1,2}, H_{2,3}, \ldots, H_{m,1} \in \Od$ represent orthogonal measurements on a cycle ($m\geq 3$) %
	and define their product $P = H_{1,2}\cdot H_{2,3} \cdots H_{m,1} \in \Od$. If $-1$ is not an eigenvalue of $P$, then the semidefinite program~\eqref{eq:P} with cost~\eqref{eq:fcycle} admits a unique solution of rank $d$.
\end{theorem}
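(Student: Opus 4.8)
The plan is to run a dual-certificate argument: exhibit an explicit rank-$d$ feasible matrix $X^\star = YY\transpose$, compute its (necessarily unique) dual candidate $S = S(X^\star)$ from \eqref{eq:S}, verify $S \succeq 0$, and then invoke Theorem~\ref{thm:fconvexglobaloptunique}. Since a linear $f$ is convex and, by Proposition~\ref{prop:rankdmaxnorm}, a rank-$d$ matrix of $\calC$ is an extreme point, the only nontrivial things left to check are $S\succeq 0$ and the strict-complementarity identity $\rank(X^\star) + \rank(S) = n$.

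First I would write down the candidate. Because $-1$ is not an eigenvalue of $P\in\Od$, an eigenvalue count gives $\det P = 1$ and shows $P$ has no eigenvalue on the negative real axis, so the principal logarithm $\Omega = \Log P$ is a well-defined real skew-symmetric matrix whose (purely imaginary) eigenvalues have modulus $<\pi$; set $G = \exp(\Omega/m)$, so that $G\in\Od$ with $\det G = 1$, $G^m = P$, $G$ commutes with $P$, and every eigenvalue of $G$ is $e^{i\alpha}$ with $|\alpha| < \pi/m$. With $T_1 = I_d$ and $T_i = H_{1,2}H_{2,3}\cdots H_{i-1,i}$ for $i = 2,\ldots,m$ (so $T_{i+1} = T_iH_{i,i+1}$ and, using the cyclic convention $T_{m+1} = T_mH_{m,1} = P$), I would set $Y_i = T_i\transpose G^{i-1}$ and stack these into $Y$. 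Short computations give $Y_iY_i\transpose = I_d$ (hence $X^\star := YY\transpose \in \calC$), $Y_iY_{i+1}\transpose = (T_i\transpose G^{-1}T_i)H_{i,i+1}$ --- the holonomy $P^{-1}$ spread evenly around the cycle, i.e.\ the estimator of~\citep{sharp2004multiview,peters2014sensor} --- and, crucially, the wrap-around closes up: $Y_{m+1} = T_{m+1}\transpose G^m = P\transpose P = I_d = Y_1$, so $Y$ is compatible with the cyclic block structure.

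Next I would compute the certificate. With $f$ linear, $\nabla f(X^\star) = C$ and $S = C - \sbd{CX^\star}$; off the block diagonal $S_{ij} = C_{ij}$, while using $T_{i-1}H_{i-1,i} = T_i$ one finds $(CX^\star)_{ii} = -T_i\transpose(G + G\transpose)T_i$, which is symmetric (so $Y$ is in fact a critical point of~$(\textrm{RP}_d)$) and gives $S_{ii} = T_i\transpose(G+G\transpose)T_i$. Conjugating $S$ by the block-diagonal orthogonal matrix $\diag(T_1,\ldots,T_m)$ turns it into a ``twisted cycle Laplacian'' $\check S$ whose diagonal blocks all equal $G+G\transpose$, whose $m-1$ blocks joining consecutive nodes equal $-I_d$, and whose single wrap-around block equals $-P$. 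Decomposing along the common eigenspaces of $P$ and $G$: on the $(+1)$-eigenspace of $P$, $\check S$ is the ordinary cycle-graph Laplacian tensored with an identity; on each two-dimensional rotation block of angle $\theta\in(-\pi,\pi)$, it is the $m\times m$ Hermitian ``circulant-with-twist'' matrix with diagonal $2\cos(\theta/m)$, sub/super-diagonal $-1$, and corner entries $-e^{\pm i\theta}$, whose eigenvalues work out (diagonalizing by $v_k = (1,z,\ldots,z^{m-1})$ with $z^m = e^{i\theta}$) to $\mu_k = 2\cos(\theta/m) - 2\cos\!\big((\theta+2\pi k)/m\big)$ for $k = 0,\ldots,m-1$. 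The step I expect to be the main obstacle is showing $\mu_k \geq 0$ with equality only at $k = 0$: this uses $|\theta/m| < \pi/m$, strict monotonicity of $\cos$ on $[0,\pi]$, and the elementary fact that for $m\geq 3$ and $k = 1,\ldots,m-1$ the angle $(\theta+2\pi k)/m$, reduced modulo $2\pi$, stays strictly between $|\theta|/m$ and $2\pi - |\theta|/m$ --- its closest approach to a multiple of $2\pi$ is roughly $\pi/m$, and $m\geq 3$ is exactly what keeps it below $2\pi - |\theta|/m$.

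Finally I would collect the pieces. The analysis gives $\check S\succeq 0$, hence $S\succeq 0$, and $\dim\ker S = d$ (each rotation block contributes its two-real-dimensional kernel and the $(+1)$-eigenspace contributes its full dimension), i.e.\ $\rank S = (m-1)d$. A direct check that $SY = 0$ --- equivalently, after conjugation, that $\check S$ annihilates the stacked vector $(G^0,G^1,\ldots,G^{m-1})\transpose$, which telescopes using $G\transpose G = I_d$, $G^m = P$, and $P\transpose G^{m-1} = G\transpose$ --- shows $\col(Y)\subseteq\ker S$, hence $\col(Y) = \ker S$ by equality of dimensions and $SX^\star = 0$. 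Therefore $X^\star$ is a KKT point of~\eqref{eq:P} (Lemma~\ref{lem:kktpoint} with $\hat\Lambda = -\sbd{CX^\star}$, equivalently Theorem~\ref{thm:KKTS}); it is an extreme point of rank $d$; and $\rank(X^\star) + \rank(S) = d + (m-1)d = md = n$. Theorem~\ref{thm:fconvexglobaloptunique} then yields that $X^\star$ is the unique global optimizer of~\eqref{eq:P}, which has rank $d$, as claimed.
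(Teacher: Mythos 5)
Your proof is correct and follows the same overall strategy as the paper: construct the explicit rank-$d$ candidate $X^\star$ (the ``spread the holonomy evenly'' estimator, obtained via a principal $m$th root of $P$), apply Theorem~\ref{thm:fconvexglobaloptunique}, reduce to showing $S(X^\star)\succeq 0$ with $\rank S = (m-1)d$, and use a block-orthogonal conjugation to decompose $S$ into $m\times m$ ``twisted cycle'' blocks of the form~\eqref{eq:blockk}. The one place where you genuinely diverge is the spectral verification of those blocks. The paper lower-bounds the spectrum of $A$ via Cauchy interlacing against the tridiagonal Toeplitz principal submatrix $T$, quoting the closed-form Toeplitz spectrum from the literature, and then exhibits a kernel vector to pin down the bottom eigenvalue. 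You instead diagonalize the twisted circulant directly with the ansatz $v_k = (1, z, \ldots, z^{m-1})$, $z^m = e^{i\theta}$, producing all $m$ eigenvalues in closed form $\mu_k = 2\cos(\theta/m) - 2\cos\big((\theta + 2\pi k)/m\big)$; positivity for $k\neq 0$ then follows from a short trigonometric inequality using only $|\theta| < \pi$. Your variant is self-contained (no external Toeplitz result, no interlacing), gives the full spectrum of $S$ explicitly, and makes the zero eigenvalue and its eigenvector transparent at once --- arguably a cleaner route to the same conclusion. One small misstatement: your parenthetical that ``$m\geq 3$ is exactly what keeps it below $2\pi - |\theta|/m$'' is not accurate --- the inequality $(2m-1)\pi/m < 2\pi - |\theta|/m$ reduces to $|\theta| < \pi$, which holds for all $m\geq 2$; the restriction $m\geq 3$ comes from the cycle structure being a simple graph, not from this estimate. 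This does not affect the validity of the argument.
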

\begin{proof}
	\emph{Part 1: guessing $X$.}
	When the measurements are perfectly consistent, $P = I_d$, and it is easy to construct $X$: set $Y_m = I_d$ and $Y_{i} = H_{i,i+1} Y_{i+1}$ for $i = 1\ldots m-1$; then $X_{ij} = Y_i^{}Y_j\transpose$. This construction does not use $H_{m,1}$ but still achieves $g(Y) = 0$ owing to $P = I_d$, hence $X$ is optimal. When the cycle is inconsistent, it is reasonable to guess that the least-squares criterion will attempt to spread the inconsistency evenly over each edge~\citep{sharp2004multiview,peters2014sensor}. One $m$th of the error is represented by an offset $P^{1/m}$---taking this principal matrix root requires $P$ not to have negative eigenvalues. We build $Y$ by incorporating part of the error at each step, appropriately aligned. First define this recurrence: $Q_m = H_{m,1}$ and $Q_{i} = H_{i,i+1}Q_{i+1}$ for $i = (m-1)\ldots 1$ (note that $Q_1 = P$). Then, $Y_m = I_d$ and $Y_{i} = Q_i P^{-1/m} Q_i\transpose H_{i,i+1} Y_{i+1}$ for $i = (m-1)\ldots 1$. As previously, $X_{ij} = Y_i^{}Y_j\transpose$. It is not hard to check that $X_{ij} = Q_i^{}P^{(i-j)/m}Q_j\transpose$. Of course, $X$ is admissible for~\eqref{eq:P} with rank $d$.
	
	\emph{Part 2: certifying $X$.} By Theorem~\ref{thm:fconvexglobaloptunique}, it is sufficient to verify that $S(X)$~\eqref{eq:S} is positive semidefinite with rank $(m-1)d$. Let $U$ be a $d\times d$ unitary matrix such that $D = U^* P U$ is diagonal---$U$ always exists since $P$ is normal---and let $V = \diag(Q_1U, \ldots, Q_mU)$ be a block-diagonal unitary matrix. We use $V$ to operate a change of variables on $C$ and $X$:
	\begin{align}
	V^*CV & =
	-\begin{pmatrix}
	0 & I_d & & & D^{-1} \\
	I_d & 0 & I_d & & \\
	& I_d & \ddots & \ddots & \\
	& & \ddots & \ddots & I_d \\
	D & & & I_d & 0
	\end{pmatrix}, & (V^*XV)_{ij} & = D^{(i-j)/m}.
	\end{align}
	We used that $D$ is unitary. Indeed, since $P$ is orthogonal without 1 as an eigenvalue, its eigenvalues are such that $D = \diag(e^{i\theta_1}, \ldots, e^{i\theta_d})$ for $\theta_1, \ldots, \theta_d \in ]-\pi, \pi[$. The spectrum of $S(X)$ is identical to that of $V^*S(X)V$, thus we study:
	\begin{align}
	V^* S(X) V & = \begin{pmatrix}
	D^{1/m}+D^{-1/m} & -I_d & & & -D^{-1} \\
	-I_d & D^{1/m}+D^{-1/m} & -I_d & & \\
	& -I_d & \ddots & \ddots & \\
	& & \ddots & \ddots & -I_d \\
	-D & & & -I_d & D^{1/m}+D^{-1/m}
	\end{pmatrix}.
	\label{eq:VstarSV}
	\end{align}
	All blocks of~\eqref{eq:VstarSV} are diagonal, so that its rows and columns may be permuted (without affecting its spectrum) to make it block diagonal, with $k$th block of size $m$ given by
	\begin{align}
	A & = \begin{pmatrix}
	2\cos(\theta_k/m) & -1 & & & -e^{-i\theta_k} \\
	-1 & 2\cos(\theta_k/m) & -1 & & \\
	& -1 & \ddots & \ddots & \\
	& & \ddots & \ddots & -1 \\
	-e^{i\theta_k} & & & -1 & 2\cos(\theta_k/m)
	\end{pmatrix} = \begin{pmatrix}
	T & u \\ u^* & c
	\end{pmatrix},
	\label{eq:blockk}
	\end{align}
	with $T\in\mathbb{R}^{(m-1)\times (m-1)}$, $u \in\mathbb{C}^{m-1}$ and $c = 2\cos(\theta_k/m)$. It remains to show that $A$ is positive semidefinite with rank $m-1$ for any $\theta_k \in\, ]-\pi, \pi[$. Fortunately, $T$ is tridiagonal and Toeplitz, so that its whole spectrum is known explicitly~\citep{noschese2013tridiagonal}: $\lambda_j(T) = 2\big(\!\cos(\theta_k/m) - \cos(j\pi/m) \big)$, for $j = 1\ldots m-1$. These eigenvalues are all positive. By the Cauchy interlacing theorem,
	\begin{align}
	\lambda_1(A) \leq \lambda_1(T) \leq \lambda_2(A) \leq \lambda_2(T) \leq \cdots \leq \lambda_{m-1}(T) \leq \lambda_m(A).
	\end{align}
	In particular, $\lambda_2(A), \ldots, \lambda_m(A) > 0$. Since the vector $[e^{i(1/m)\theta_k}, e^{i(2/m)\theta_k}, \ldots, e^{i(m/m)\theta_k}]^*$ is in the kernel of $A$, it must be that $\lambda_1(A) = 0$. This concludes the proof.
\end{proof}
In general, the condition on the eigenvalues of $P$ is necessary. Indeed, for $d = 1$ and $m = 3$, choose the measurements such that $P = -1$ (for example, $+1$, $+1$ and $-1$) and verify that none of the 4 admissible rank-1 matrices are optimal. For the frequent case where the measurements are rotation matrices (that is, orthogonal with determinant +1), the condition is not too restrictive: even if they were distributed uniformly at random, $P$ would satisfy the condition almost surely.

\section{Generic face dimension}

For $d=1$, the following theorem shows almost all faces of $\calC$ have minimal dimension, as per the bound~\eqref{eq:dimF}~\citep{pataki1998rank,barvinok1995problems}. Key parts of the proof are due to Xiuyuan Cheng and Bal\'azs Gerencs\'er.
\begin{theorem}[Generically, faces have minimal dimension]
	For $d=1$,
	if $Y \in \Stdpm$ is selected uniformly at random, then, almost surely, $\dim \calF_{YY\transpose} = \max\left(0, \Delta\right)$, with $\Delta = \frac{p(p+1)}{2} - m\frac{d(d+1)}{2}$~\eqref{eq:calF}.
	\label{thm:genericfacedimension}
\end{theorem}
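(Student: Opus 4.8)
The plan is to translate the statement into a question about linear independence of rank-one matrices and then run a standard genericity argument. For $d=1$ we have $\St(1,p)^m=(S^{p-1})^m$: a point $Y$ is a tuple of unit vectors $y_1,\dots,y_m\in\reals^p$, and $X=YY\transpose$ has $X_{ij}=y_i\transpose y_j$. Writing $N=\tfrac{p(p+1)}{2}=\dim\Spp$, formula~\eqref{eq:dimF} gives $\dim\calF_X=N-\rank\calL_X$, and since the adjoint of $\calL_X$ sends $e_i$ to $y_iy_i\transpose$, we have $\rank\calL_X=\dim\spann\{y_1y_1\transpose,\dots,y_my_m\transpose\}\le\min(m,N)$. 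Because $\Delta=N-m$, the theorem is equivalent to the claim that $\rank\calL_X=\min(m,N)$ almost surely. (The case $p=1$ is trivial: then $y_iy_i\transpose=1$ and $N=1$, so I would assume $p\ge2$ from here on.)

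First I would record the one structural input: the rank-one symmetric matrices linearly span all of $\Spp$, since any $M\in\Spp$ equals $\sum_k\lambda_kv_kv_k\transpose$ by the spectral theorem. Hence there exist unit vectors $\bar y_1,\dots,\bar y_N$ whose outer products $\bar y_i\bar y_i\transpose$ form a basis of $\Spp$. Keeping the first $m$ of them when $m<N$, or padding with arbitrary unit vectors in the remaining slots when $m\ge N$, produces a point $\bar Y\in\St(1,p)^m$ at which $\rank\calL_{\bar Y\bar Y\transpose}=\min(m,N)$: the maximal possible value is attained at least once.

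Next I would run the genericity step. Fix a linear isomorphism $\Spp\cong\reals^N$, and let $B(Y)$ be the $N\times m$ matrix whose $i$-th column represents $y_iy_i\transpose$; its entries are polynomials in $(y_1,\dots,y_m)\in\reals^{pm}$. The set of $Y\in\St(1,p)^m$ with $\rank\calL_X<\min(m,N)$ is precisely the common vanishing locus, inside $\St(1,p)^m$, of all $\min(m,N)\times\min(m,N)$ minors of $B(Y)$; by the previous paragraph at least one such minor $q$ is nonzero at $\bar Y$, so $q$ is a polynomial not vanishing identically on $\St(1,p)^m$. Since $\St(1,p)^m=(S^{p-1})^m$ is a connected real-analytic manifold (here $p\ge2$), the zero set of the non-trivial real-analytic function $q$ has measure zero for the uniform probability measure (which is proportional to Riemannian volume), and the rank-deficient set, being contained in $\{q=0\}$, is a null set. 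This yields $\rank\calL_X=\min(m,N)$ almost surely, which is the theorem.

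I expect the only substantive ingredient to be the spanning property of rank-one symmetric matrices, which is immediate; the one place needing a little care is the final transfer — from ``some minor polynomial is not identically zero'' to ``the rank-deficient set is a null set'' — which rests on the connectedness of $(S^{p-1})^m$ together with the classical fact that the zero set of a non-trivial real-analytic function on a connected analytic manifold has measure zero (equivalently, that the rank-deficient set is a proper real-algebraic subvariety, hence of strictly smaller dimension). For $d>1$ one would need the analogous non-degeneracy for the map sending a frame $Y_i$ to the $\tfrac{d(d+1)}{2}$ scalar constraints $A\mapsto Y_iAY_i\transpose$ it imposes; this ``block Veronese'' analysis is more delicate, which is why the clean statement is confined to $d=1$.
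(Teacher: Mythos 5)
Your proof is correct but takes a genuinely different route to the genericity step. Both arguments begin by reducing the theorem (via~\eqref{eq:dimF} and the observation that $\calL_X^*$ sends the $i$-th standard basis vector to $y_i^{}y_i\transpose$) to showing that $\dim\spann\{y_1^{}y_1\transpose,\ldots,y_m^{}y_m\transpose\}=\min(m,\tfrac{p(p+1)}{2})$ almost surely, and both ultimately rest on the fact that rank-one symmetric matrices span $\Spp$. You make this fact do the work by exhibiting one configuration $\bar Y$ where the span is maximal and then arguing by real-algebraic genericity: the rank-deficient locus is the common zero set of the maximal minors of a polynomial matrix, at least one minor is not identically zero on $(S^{p-1})^m$, and the zero set of a nontrivial polynomial (or real-analytic function) on a connected manifold is null. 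The paper instead argues probabilistically via Lemma~\ref{lem:independentvectorspan}: it adds the random vectors $y_i^{}y_i\transpose$ one at a time, and the key verification is that for any proper subspace $U\subsetneq\Spp$, $\Pr[yy\transpose\in U]=0$; this is shown by taking a nonzero $X\perp U$, diagonalizing $X$, using rotational invariance of the Gaussian, and noting that $\sum_j\lambda_j(y^j)^2$ has a density with no atom at zero, being a convolution of scaled $\chi^2_1$ densities. Your route is shorter and keeps the algebraic structure (Veronese map, minors) visible, and it extends at no extra cost to any distribution absolutely continuous with respect to surface measure; it does, however, lean on the real-analytic null-set theorem, whereas the paper's argument is entirely self-contained in elementary probability and isolates the reusable Lemma~\ref{lem:independentvectorspan}. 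Both require $p\ge2$ (the paper implicitly via the density argument, you explicitly via connectedness of $(S^{p-1})^m$), and you correctly dispose of $p=1$ as trivial.
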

We first provide a useful lemma.
\begin{lemma}
	Let $v_1, \ldots, v_n$ be statistically independent random vectors in a vector space $V$ of dimension $k$. If for all $i$ and for all subspaces $U\subset V$ with $\dim U < k$, $\Pr[v_i \in U] = 0$, then, almost surely, $\dim \spann \{v_1, \ldots, v_n\} = \min(n, k)$ (which is maximal).
	\label{lem:independentvectorspan}
\end{lemma}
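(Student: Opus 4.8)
\textbf{Proof proposal for Lemma~\ref{lem:independentvectorspan}.}

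The plan is to proceed by induction on $n$, building up the span one vector at a time and showing that at each step, conditionally on the previously drawn vectors, the newly drawn vector fails to increase the dimension of the span only on an event of probability zero. First I would dispose of the case $n \leq k$: suppose inductively that $v_1, \ldots, v_{j-1}$ are almost surely linearly independent (the base case $j=1$ being trivial, since $v_1 = 0$ would force $v_1$ into the zero subspace, which has probability zero by hypothesis applied with $U = \{0\}$). Condition on a realization of $v_1, \ldots, v_{j-1}$ that is linearly independent; this occurs with probability one. Then $U_{j-1} := \spann\{v_1, \ldots, v_{j-1}\}$ is a fixed subspace of dimension $j - 1 < k$. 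By statistical independence, $v_j$ is distributed according to its (unconditional) law, so $\Pr[v_j \in U_{j-1} \mid v_1, \ldots, v_{j-1}] = 0$ by the hypothesis. Integrating over the conditioning event (which has full measure), $v_j \notin U_{j-1}$ almost surely, so $v_1, \ldots, v_j$ are almost surely linearly independent. After $n \leq k$ steps this gives $\dim\spann\{v_1,\ldots,v_n\} = n$ almost surely.

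For the case $n > k$, the same induction shows $v_1, \ldots, v_k$ are almost surely linearly independent, hence almost surely $\spann\{v_1,\ldots,v_k\} = V$, so a fortiori $\spann\{v_1,\ldots,v_n\} = V$ has dimension $k = \min(n,k)$. Combining the two cases yields $\dim\spann\{v_1,\ldots,v_n\} = \min(n,k)$ almost surely.

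The only point requiring a little care — and the place I would be most careful in writing it out — is the conditioning argument: one must justify that ``conditionally on $v_1,\ldots,v_{j-1}$, the conditional law of $v_j$ is its marginal law,'' which is exactly the content of statistical independence, and then that a statement holding almost surely under the conditional law for almost every value of the conditioning variables holds almost surely unconditionally (Fubini/tower property for the relevant event). This is routine measure theory, but it is the substantive step; everything else is bookkeeping with subspaces and the rank-nullity bound. No topological or algebraic genericity machinery is needed here — the hypothesis that each $v_i$ charges no proper subspace with positive probability is precisely tailored to make the induction go through.
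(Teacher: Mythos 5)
Your proof is correct and follows essentially the same route as the paper's: an induction that builds up the span $U_t = \spann\{v_1,\ldots,v_t\}$ one vector at a time and uses independence to apply the no-proper-subspace hypothesis at each step, splitting into the cases $t < k$ and $t \geq k$. You are somewhat more explicit than the paper about the conditioning/Fubini step that justifies passing from the conditional law of $v_j$ to its marginal law, but this is the same argument.
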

\begin{proof}
	The proof is by recurrence. Define $U_t = \spann\{v_1, \ldots, v_t\}$ for $t \in \{0, \ldots, n\}$. Clearly, $\dim U_0 = 0$. Assume $\dim U_t = \min(t, k)$ almost surely (a.s.). If $t \geq k$, then $\dim U_{t+1} = \dim U_t = k$ (a.s.). Otherwise, since $U_t$ is statistically independent from $v_{t+1}$, by assumption, $\dim U_{t+1} = t+1$ (a.s.). Thus, for all $t$, $\dim U_{t} = \min(t, k)$ (a.s.).
\end{proof}
\begin{proof}
	Proof of Theorem~\ref{thm:genericfacedimension}.
	Let $y_1, \ldots, y_n \in \Rp$ denote the columns of $Y\transpose$. A matrix $\dot X = YAY\transpose$ with $A \in \Spp$ is parallel to the face $\calF_X$ if $\innersmall{YAY\transpose}{e_i^{} e_i\transpose} = \innersmall{A}{y_i^{} y_i\transpose} = 0$ for all $i$. We study the dimension $s$ of the space spanned by the constraint matrices $A_i = y_i^{} y_i\transpose$, since $\dim \calF_X = \frac{p(p+1)}{2} - s$. We do so with $Y$ taken uniformly at random. Thus, the $y_i$'s are sampled independently from $\mathcal{N}(0, I_p)$, then scaled to unit norm.
	The dimension $s$ does not depend on the scaling of the vectors $y_i$, so we may safely ignore it. % and assume $y_i \sim \mathcal{N}(0, I_p)$. %
	We note in passing that the proof holds for more general distributions too.
	
	We aim to apply Lemma~\ref{lem:independentvectorspan} with $V = \Spp$, $k = \frac{p(p+1)}{2}$ and $v_i = A_i$. The $v_i$'s are i.i.d., hence we omit the subscripts. To verify the lemma's condition, let $U$ be any proper subspace of $\Spp$: there exists a symmetric matrix $X \neq 0$ in the orthogonal complement of $U$. It suffices to check that $\innersmall{X}{yy\transpose} = y\transpose Xy \neq 0$ (a.s.), with $y \sim \mathcal{N}(0, I_p)$. Diagonalize $X = QDQ\transpose$ with $Q \in \Op$. Notice that $Q\transpose y$ is distributed identically to $y$. Thus, defining $D = \diag(\lambda_1, \ldots, \lambda_p)$ and $y = (y^1, \ldots, y^p)\transpose$, it suffices to check that $\sum_{j=1}^p \lambda_j (y^j)^2 \neq 0$ (a.s.). This is indeed true, since the $(y^j)^2$ are independent: their (nontrivial) linear combination has a density which is a convolution of (scaled) $\chi^2_1$ densities: this has no point mass at zero. \qedhere

\end{proof}

We expect that this result remains valid for $d \geq 1$, but we are missing a proof.

\end{document}